\newcolumntype{^}{>{\currentrowstyle}}
\journal{Arxiv}
\newtheorem{lemma}{Lemma}
\newtheorem{theorem}{Theorem}
\newtheorem{corollary}{Corollary}
\newtheorem{proposition}{Proposition}
\newtheorem{problem}{Problem}
\newcommand{\PG}[2]{\mathsf{PG}_{#1}(#2)}
\newcommand{\Or}[3][]{\mathsf{O}^{#1}_{#2}(#3)}
\newcommand{\Sp}[2]{\mathsf{Sp}_{#1}(#2)}
\newcommand{\Un}[2]{\mathsf{U}_{#1}(#2)}
\newcommand{\VO}[3][]{{VO}^{#1}_{#2}(#3)}
\newcommand{\Quadric}[1]{\mathcal{Q}(#1)}
\newcommand{\Aff}[2][]{\text{Aff}^{#1}(#2)}
\newcommand{\Proj}[1]{\text{Proj}(#1)}
\DeclarePairedDelimiter{\lrangle}{\langle}{\rangle}
\begin{document}
\renewcommand{\abstractname}{Abstract}
\renewcommand{\refname}{References}
\renewcommand{\tablename}{Figure.}
\renewcommand{\arraystretch}{0.9}
\thispagestyle{empty}
\sloppy

\begin{frontmatter}
\title{Tightness of the weight-distribution bound for strongly regular polar graphs}

\author[01]{Rhys J. Evans}
\ead{rhys.evans@imfm.si}

\author[02]{Sergey Goryainov}
\ead{sergey.goryainov3@gmail.com}

\author[03]{Leonid Shalaginov}
\ead{44sh@mail.ru}

\address[01] {Institute of Mathematics, Physics and Mechanics,\\ Jadranska ulica 19, 1000 Ljubljana, Slovenia}
\address[02] {School of Mathematical Sciences, Hebei International Joint Research Center for Mathematics and Interdisciplinary Science, Hebei Key Laboratory of Computational Mathematics and Applications, Hebei Workstation for Foreign Academicians,\\Hebei Normal University, Shijiazhuang  050024, P.R. China}
\address[03] {Chelyabinsk State University, Brat'ev Kashirinyh st. 129\\Chelyabinsk  454021, Russia}


\begin{abstract}
In this paper we show the tightness of the weight-distribution bound for the positive non-principle eigenvalue of strongly regular (affine) polar graphs and characterise the optimal eigenfunctions. Additionally, we show the tightness of the weight-distribution bound for the negative non-principle eigenvalue of some unitary polar graphs.
\end{abstract}

\begin{keyword}
classical polar space; strongly regular graph; weight-distribution bound 
\vspace{\baselineskip}
\MSC[2010] 05C25\sep 05E30\sep 51E20
\end{keyword}
\end{frontmatter}

\section{Introduction}

Recently, for a variety of distance-regular graphs, the eigenfunctions having the miminum cardinality of support were studied. These studies were initiated in \cite{KMP16}, surveyed in \cite{SV21} and further extended in \cite{GSY23, GY24, GP24, DGHS24}.
One of the main tools in these studies is the weight-distribution bound, a lower bound for the cardinality of support of an eigenfunction of a distance-regular graph. In particular, the tightness of the weight-distribution bound was shown in \cite{GY24} for both non-principal eigenvalues of the affine polar graphs $VO^+(4,q)$. Recently, in connection with eigenfunctions of symplectic graphs $Sp(4,q)$ whose cardinality of support meets the weight-distribution bound, a new infinite family of divisible design graphs was constructed \cite{DGHS24}.
Except for the motivation described in \cite{SV21}, the eigenfunctions whose cardinality of support meets the weight-distribution bound are of interest since they give a restriction \cite[Corollary 1]{GP24} on the equitable 2-partitions of the graphs.
Motivated by the results on strongly regular polar graphs, we initiate the studies of optimal eigenfunctions in strongly regular (affine) polar graphs.  

The main results of the paper are as follows. We first show the tightness of the weight-distribution bound for the positive non-principle eigenvalue of strongly regular (affine) polar graphs and characterise the optimal eigenfunctions. 

\begin{theorem}\label{thm:Main1}
Let $X$ be a strongly regular (affine) polar graph. Then the following statements hold.\\
{\rm (1)} Let $C_0,C_1$ be two distinct Delsarte cliques in $X$ such that the size of the intersection of $C_1 \cap C_2$ is maximum possible. Let $T_0 = C_0 \setminus C$ and $T_1 = C_1 \setminus C$, where $C = C_0 \cap C_1$.
Then the function $f:V(X) \mapsto \mathbb{R}$ taking value 1 on the vertices from $T_0$, value $-1$ on the vertices from $T_1$, and value 0 otherwise is an eigenfunction of $X$ corresponding to the positive non-principal eigenvalue, with the support meeting the weight-distribution bound.\\
{\rm (2)} Let $g$ be an eigenfunction of $X$ corresponding to the positive non-principal eigenvalue, with the support meeting the weight-distribution bound. Then $g = cf$ for some eigenfunction $f$ from item {\rm (1)} and a real number $c$.   
\end{theorem}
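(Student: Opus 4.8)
The proof rests on the structure of the Delsarte cliques of such a graph. In a strongly regular (affine) polar graph $X$, the Delsarte cliques are precisely the point sets of the generators of the underlying polar space --- the maximal totally singular subspaces, resp.\ maximal totally singular affine subspaces --- all of a common size $N$; two distinct generators meet in a totally singular subspace properly contained in each, and the largest possible intersection of two distinct Delsarte cliques has size $N-(r+1)$, where $r$ is the positive non-principal eigenvalue (for the collinearity graph of a rank-$d$ classical polar space, $N=\frac{q^{d}-1}{q-1}$, $r=q^{d-1}-1$, largest intersection $\frac{q^{d-1}-1}{q-1}$; for $VO^+(2m,q)$, $N=q^{m}$, $r+1=q^{m}-q^{m-1}$, largest intersection $q^{m-1}$). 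I also use the classical fact that for a Delsarte clique $C$ the vector $\chi_{C}-\frac{N}{v}\mathbf{1}$ lies in the $r$-eigenspace, equality in the Delsarte bound killing the component of $\chi_{C}$ along the negative eigenvalue. Then item (1) is immediate: the $\chi_{C}$-terms cancel, so $f=\chi_{T_{0}}-\chi_{T_{1}}=\chi_{C_{0}}-\chi_{C_{1}}$ is a difference of two $r$-eigenvectors, hence an $r$-eigenvector; and $|\mathrm{supp}(f)|=2(N-|C|)=2(r+1)$, which is exactly the weight-distribution bound $1+r+(1+r)$ for the eigenvalue $r$.

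For item (2), normalise $g$ so that $\max_{v\in V(X)}|g(v)|=1$, attained, say, with value $1$. The usual tightness analysis, applied at every vertex $y$ with $g(y)=1$, uses $\sum_{z\sim y}g(z)=r$, $\sum_{d(y,z)=2}g(z)=-(1+r)$, $|\mathrm{supp}(g)|=2(r+1)$ and $|g|\le1$ to conclude that $\mathrm{supp}(g)\cap\Gamma(y)$ consists of exactly $r$ vertices of value $1$, and $\mathrm{supp}(g)\cap\Gamma_{2}(y)$ of exactly $r+1$ vertices of value $-1$. Hence $g$ is $\{0,\pm1\}$-valued, $P:=g^{-1}(1)$ and $M:=g^{-1}(-1)$ are cliques of size $r+1$, and every vertex of $P$ is at distance $2$ from every vertex of $M$. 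Since every clique lies in a generator, $P\subseteq G_{0}$ and $M\subseteq G_{1}$ for some generators $G_{0},G_{1}$, and $P\cap G_{1}=M\cap G_{0}=\varnothing$ (a vertex of $P\cap G_{1}$ would be collinear with all of $M$, contradicting mutual distance $2$). Now evaluate $Ag=rg$ at $z\in G_{0}\setminus P$, which is non-empty since $|C|=N-(r+1)>0$: as $z\notin P\cup M$ we have $g(z)=0$, and since $z$ and all of $P$ lie in the totally singular subspace $G_{0}$, $z$ is collinear with every vertex of $P$; thus $0=(Ag)(z)=|P|-|\Gamma(z)\cap M|$ forces $z$ to be collinear with all of $M$, and since $M$ spans the maximal totally singular subspace $G_{1}$ this gives $z\in G_{1}$. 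Hence $G_{0}\setminus P\subseteq G_{0}\cap G_{1}$, while $G_{0}\cap G_{1}\subseteq G_{0}\setminus P$ is immediate from $P\cap G_{1}=\varnothing$; so $C:=G_{0}\cap G_{1}=G_{0}\setminus P$ has the largest possible size $N-(r+1)$, and also $M=G_{1}\setminus C$. Taking $C_{0}=G_{0}$, $C_{1}=G_{1}$, $T_{0}=P$, $T_{1}=M$ gives $g=\chi_{P}-\chi_{M}=\chi_{T_{0}}-\chi_{T_{1}}$, i.e.\ $g=cf$ with $f$ as in item (1) once the normalisation is undone.

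I expect the main difficulty to be organisational rather than a single hard step: one must check, family by family (the orthogonal, symplectic and unitary collinearity graphs and the affine polar graphs), that the Delsarte cliques are exactly the generators and are equicardinal, that $N-|C|=r+1$, and that every clique of $X$ spans a totally singular (affine) subspace --- so a clique of size $r+1$ lies in a generator. These facts are standard in polar-space geometry but are spread over several parameter computations. The one genuinely new point is the local computation at the ``extra'' vertices $z\in G_{0}\setminus P$: it is what makes the rigidity argument in item (2) work, and it conveniently requires no case analysis, even yielding automatically that $C=G_{0}\cap G_{1}$ is non-empty.
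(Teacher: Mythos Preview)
Your argument is clean and works for the polar graphs and the hyperbolic affine polar graphs $VO^{+}(2m,q)$: there the generators are Delsarte (regular) cliques, so $\chi_{C_0}-\chi_{C_1}$ lies in the $\theta_1$-eigenspace automatically, and the ``extra vertex'' computation in part~(2) pins down $G_0\cap G_1$. This is somewhat more conceptual than the paper's route, which verifies the local eigenfunction condition directly in each family and, for part~(2), invokes the Krotov--Mogilnykh--Potapov result that the support must be a pair of isolated $(\theta_1+1)$-cliques and then characterises such pairs via the polar-space geometry.

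The genuine gap is the elliptic affine family $VO^{-}(2m,q)$, which your framework does not cover and which you do not list among your examples. There the maximal cliques $v+\mathrm{Aff}(M)$ have size $q^{m-1}=\theta_1+1$ but are \emph{not} Delsarte cliques: the Delsarte--Hoffman bound $1+(q^{m-1}-1)(q^{m}+1)/((q-1)q^{m-1}+1)$ is strictly larger and not an integer, and indeed a vertex outside $v+\mathrm{Aff}(M)$ has either $q^{m-2}$ or $0$ neighbours in it according as it lies outside or inside $v+\mathrm{Aff}(M)^{\perp}$. Hence the clique is not regular and $\chi_{C_0}-\chi_{C_1}$ is in general not a $\theta_1$-eigenfunction. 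Your formula $|C|=N-(r+1)$ also fails here, since $N=r+1$ would force disjointness, whereas two distinct maximal cliques can in fact meet in $q^{m-2}$ points. The correct construction uses \emph{disjoint} translates $v+\mathrm{Aff}(M)$ and $t+v+\mathrm{Aff}(M)$ with $t\in\mathrm{Aff}(M)^{\perp}\setminus\mathrm{Aff}(M)$, and only those specific pairs give eigenfunctions; an arbitrary pair of disjoint maximal cliques does not. Correspondingly, your part~(2) argument collapses because $G_0\setminus P=\varnothing$; one must instead use the neighbour-count dichotomy above to force $T_1\subseteq v+\mathrm{Aff}(M)^{\perp}$ and then conclude $T_1=t+T_0$ for some such $t$. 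The paper treats this case separately with exactly this analysis.
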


Additionally, we show the tightness of the weight-distribution bound for the negative non-principle eigenvalue of some unitary polar graphs.
\begin{proposition}\label{prop:Main2}
Let $q$ be a prime power, a square.
The weight-distribution bound is tight for the negative non-principal eigenvalue $\theta_2 = -(\sqrt{q}+1)$ of the unitary graph $U(4,q)$.     
\end{proposition}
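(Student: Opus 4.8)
The plan is to pin down the weight-distribution bound for $\theta_2$ by a direct computation and then exhibit a single eigenfunction attaining it; in contrast with Theorem~\ref{thm:Main1}, no classification is sought here. First I would recall that $U(4,q)$ is the collinearity graph of the Hermitian surface $H(3,q)\subseteq\mathrm{PG}(3,q)$, a generalised quadrangle of order $(q,\sqrt q)$, hence strongly regular with $k=q(\sqrt q+1)$, $\lambda=q-1$, $\mu=\sqrt q+1$ and non-principal eigenvalues $\theta_1=q-1$, $\theta_2=-(\sqrt q+1)$. The standard sequence of $\theta_2$ is $u_0=1$, $u_1=\theta_2/k=-1/q$, and $u_2=1/q^2$ (from the three-term recurrence), so with $k_0=1$, $k_1=q(\sqrt q+1)$, $k_2=q^2\sqrt q$ the weight-distribution bound evaluates to $\sum_{i=0}^{2}k_i|u_i|=1+(\sqrt q+1)+\sqrt q=2(\sqrt q+1)$.

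For the construction I would take a secant line $L$ of $H(3,q)$ in $\mathrm{PG}(3,q)$ (a line on which the Hermitian form restricts nondegenerately, so $A:=L\cap H$ is a Baer subline of size $\sqrt q+1$) together with its polar line $L'=L^{\perp}$, which is then also secant, disjoint from $L$, and spans $\mathrm{PG}(3,q)$ together with $L$; set $B:=L'\cap H$, so $|B|=\sqrt q+1$. Equivalently, since $H(3,q)$ is the dual of the elliptic quadric $Q^-(5,\sqrt q)$, all of whose lines are regular, one may start from a regular pair of noncollinear vertices $\{p,r\}$ and take $A=\{p,r\}^{\perp\perp}$, $B=\{p,r\}^{\perp}$. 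I then define $f=\chi_A-\chi_B$. Since $\mathrm{supp}(f)=A\sqcup B$ has size $2(\sqrt q+1)$, matching the bound, it remains only to verify that $f$ is a $\theta_2$-eigenfunction.

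This verification has two parts. The sets $A$ and $B$ are cocliques (two points of $A$ span the line $L$, which is not totally isotropic, and likewise for $B$), while every point of $A$ is collinear with every point of $B$: for isotropic $a\in L$, $b\in L'$ the line $\langle a,b\rangle$ is totally isotropic iff $h(a,b)=0$, which holds identically because $L'=L^{\perp}$. Thus $A\cup B$ induces a complete bipartite graph $K_{\sqrt q+1,\sqrt q+1}$, and for $a\in A$ we get $\sum_{y\sim a}f(y)=|N(a)\cap A|-|N(a)\cap B|=0-(\sqrt q+1)=\theta_2 f(a)$, symmetrically for $b\in B$. For $z\in H\setminus(L\cup L')$: the vertex $z$ is collinear with $a\in A$ iff $a\in z^{\perp}$, and since $z\notin L'$ the plane $z^{\perp}$ meets $L$ in a unique point $z^{\perp}\cap L$, so $|N(z)\cap A|$ is $1$ or $0$ according as this point lies on $H$ or not; similarly for $B$ with $z^{\perp}\cap L'$. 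Choosing coordinates with $h(x,y)=x_1y_2^{\sqrt q}+x_2y_1^{\sqrt q}+x_3y_4^{\sqrt q}+x_4y_3^{\sqrt q}$, $L=\langle e_1,e_2\rangle$, $L'=\langle e_3,e_4\rangle$, one computes $z^{\perp}\cap L=\langle z_1^{\sqrt q}e_1-z_2^{\sqrt q}e_2\rangle$, which is isotropic iff $\mathrm{Tr}(z_1z_2^{\sqrt q})=0$ (with $\mathrm{Tr}$ the relative trace $\mathbb F_q\to\mathbb F_{\sqrt q}$), and symmetrically $z^{\perp}\cap L'$ is isotropic iff $\mathrm{Tr}(z_3z_4^{\sqrt q})=0$. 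Since $z$ is isotropic precisely when $\mathrm{Tr}(z_1z_2^{\sqrt q})+\mathrm{Tr}(z_3z_4^{\sqrt q})=0$, these two traces vanish simultaneously, so $|N(z)\cap A|=|N(z)\cap B|$ and $\sum_{y\sim z}f(y)=0=\theta_2 f(z)$. Hence $f$ is a $\theta_2$-eigenfunction with support of size $2(\sqrt q+1)$, proving tightness.

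The main obstacle is that last computation: keeping the Hermitian-form conventions consistent (which argument is $\sqrt q$-semilinear, and the relative trace down to $\mathbb F_{\sqrt q}$) and verifying both that the point of $L$ conjugate to $z$ is isotropic exactly when a single trace vanishes and that the isotropy of $z$ ties the two traces together. The rest is bookkeeping, and choosing hyperbolic coordinates adapted to the conjugate pair $(L,L')$ is precisely what collapses that computation.
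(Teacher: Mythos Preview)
Your proof is correct and follows the same geometric idea as the paper's: take a secant line $L$ of $H(3,q)$ together with its polar $L^{\perp}$, observe that $(L\cap H)\cup(L^{\perp}\cap H)$ induces $K_{\sqrt q+1,\sqrt q+1}$, and check that every external isotropic point has equally many neighbours on each side. The paper carries this out with the diagonal form $\sum x_iy_i^{\sqrt q}$ and is therefore forced into a parity split (for odd $q$ an auxiliary $\varepsilon$ with $\varepsilon^{\sqrt q+1}=-1$ is needed, and four coordinate subcases are treated); your hyperbolic coordinates make the pair $(\langle e_1,e_2\rangle,\langle e_3,e_4\rangle)$ canonical and reduce the external-vertex check to the single identity $\mathrm{Tr}(z_1z_2^{\sqrt q})+\mathrm{Tr}(z_3z_4^{\sqrt q})=0$, which is a genuine streamlining of the same argument.
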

Except for the optimal eigenfunctions of affine polar graphs $VO^+(4,q)$ and symplectic $Sp(4,q)$, constructed, respectively, in \cite{GY24} and \cite{DGHS24}, and the optimal eigenfunctions of unitary graphs $U(4,q)$, constructed in Proposition \ref{prop:Main2}, we do not know any examples of the tightness of the weight-distribution bound for the negative non-principle eigenvalue of strongly regular (affine) polar graphs. The following problem is thus of interest.
\begin{problem}
What are the eigenfunctions of strongly regular (affine) polar graphs corresponding to the negative non-principal eigenvalue and having the minimum cardinality of support?     
\end{problem}

The paper is organised as follows. In Section \ref{Preliminaries}, we give preliminary definitions and results. In Section \ref{CharacterisationPositiveEigenvalue}, we prove a number of statements that imply Theorem  \ref{thm:Main1}. In Section \ref{TightnessU4q}, we prove Proposition \ref{prop:Main2}.

\section{Preliminaries}\label{Preliminaries}
In this section we list some preliminary definitions and results.
\subsection{Strongly regular graphs}
A $k$-regular graph on $v$ vertices is called \emph{strongly regular} with parameters $(v,k,\lambda,\mu)$ if any two adjacent vertices have $\lambda$ common neigbours and any two distinct non-adjacent vertices have $\mu$ common neighbours. 
 A strongly regular graph $X$ is \emph{primitive} if both $X$ and its complement
are connected. 

\begin{lemma}[{\cite[Theorem 5.2.1]{GM15}}]\label{EigenvaluesSRG}
If $X$ is a primitive strongly regular graph with parameters $(v,k,\lambda,\mu)$, then $X$ has exactly 3 distinct eigenvalues $k,\theta_1,\theta_2$, such that $k>\theta_1>0>\theta_2$. The eigenvalues $\theta_1,\theta_2$ and their multiplicities can be derived from the parameters of $X$.
\end{lemma}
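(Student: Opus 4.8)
The plan is to extract everything from the single quadratic matrix identity satisfied by the adjacency matrix $A$ of $X$, combined with the Perron--Frobenius theorem and the primitivity hypothesis. First I would record the defining identity
\begin{equation*}
A^2 = kI + \lambda A + \mu(J - I - A),
\end{equation*}
which follows by counting walks of length two between an ordered pair of vertices: the diagonal entries give $k$, the entries indexed by adjacent pairs give $\lambda$, and those indexed by distinct non-adjacent pairs give $\mu$. Rearranging yields $A^2 - (\lambda - \mu)A - (k-\mu)I = \mu J$. Since $X$ is $k$-regular, the all-ones vector $\mathbf 1$ satisfies $A\mathbf 1 = k\mathbf 1$, so $k$ is an eigenvalue; as $X$ is connected (being primitive), Perron--Frobenius gives that $k$ is the largest eigenvalue and is simple.

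Next I would analyse eigenvectors orthogonal to $\mathbf 1$. Any eigenvalue $\theta \neq k$ admits an eigenvector $v \perp \mathbf 1$, so $Jv = 0$, and feeding $v$ into the rearranged identity produces the scalar quadratic
\begin{equation*}
\theta^2 - (\lambda - \mu)\theta - (k - \mu) = 0 .
\end{equation*}
Hence every eigenvalue other than $k$ is a root of this quadratic; writing the roots as $\theta_1 \geq \theta_2$, this already caps the number of distinct eigenvalues at three. To fix the signs I would use that the product of the roots is $\theta_1\theta_2 = \mu - k$. The elementary bound $\mu \leq k$ (the common neighbours of a vertex and one of its non-neighbours lie in a $k$-set) together with the fact that the equality case $\mu = k$ characterises the regular complete multipartite graphs --- which are imprimitive, their complements being disconnected unions of cliques --- forces $\mu < k$ under our hypothesis. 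Thus $\theta_1\theta_2 < 0$, so the roots are real, distinct, and of opposite sign, giving $\theta_1 > 0 > \theta_2$, while $k > \theta_1$ follows from the maximality of $k$.

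It remains to verify that both $\theta_1$ and $\theta_2$ genuinely occur, so that $X$ has exactly three distinct eigenvalues rather than fewer. Here I would note that a primitive strongly regular graph is connected, non-complete, and has $\mu > 0$, hence has diameter exactly two; a connected graph of diameter two has at least three distinct eigenvalues, which with the upper bound of three forces equality. Finally, denoting by $m_1, m_2$ the multiplicities of $\theta_1, \theta_2$, the two linear relations
\begin{equation*}
1 + m_1 + m_2 = v, \qquad k + m_1\theta_1 + m_2\theta_2 = \operatorname{tr}(A) = 0
\end{equation*}
(the second because $A$ has zero diagonal) form a nonsingular system, whose solution expresses $m_1, m_2$ as explicit rational functions of $v, k, \lambda, \mu$; since $\theta_1, \theta_2$ are themselves determined by the quadratic, all four quantities are derivable from $(v,k,\lambda,\mu)$. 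The main obstacle is the bookkeeping that guarantees both non-principal eigenvalues actually appear and carry the stated signs; once primitivity is translated into the conditions $0 < \mu < k$ and diameter two, the remaining steps are routine linear algebra.
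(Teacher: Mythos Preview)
Your proof is correct and follows the standard argument (essentially that of the cited reference \cite[Theorem~5.2.1]{GM15}): derive the quadratic for the non-principal eigenvalues from the identity $A^{2}=kI+\lambda A+\mu(J-I-A)$, use the product of the roots $\theta_1\theta_2=\mu-k<0$ to get the sign pattern, invoke the diameter bound to ensure both roots actually occur, and read off the multiplicities from the trace conditions.

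Note, however, that the paper itself does not supply a proof of this lemma: it is quoted as a known result with a citation, so there is no in-paper argument to compare against. Your write-up is a faithful reconstruction of the textbook proof. One small point worth tightening: you deduce $k>\theta_1$ from ``maximality of $k$'', which a priori only gives $k\ge\theta_1$; the strict inequality follows because, under primitivity, $\mu>0$ and hence $k$ is not a root of the quadratic (equivalently, the $k$-eigenspace is spanned by $\mathbf{1}$ while any $\theta_1$-eigenvector is orthogonal to $\mathbf{1}$).
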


A clique $C$ in a graph is called \emph{regular} if every vertex that is not in $C$ has the same positive number of neighbors in $C$. The following lemma gives an upper bound on the clique number of a strongly regular graph, and shows that a maximum clique is regular if and only if its size agrees with the given upper bound.

\begin{lemma} [Delsarte-Hoffman bound, {\cite[Proposition 1.3.2]{BCN89}}]\label{HoffmanBound}
Suppose that $X$ is a strongly regular graph with parameters $(v,k,\lambda,\mu)$ and smallest eigenvalue $-m$.
Let $C$ be a clique in $X$. Then $|C| \le 1+\frac{k}{m}$, with equality if and only if every vertex that is not in $C$ has the same number of neighbors (namely $\frac{\mu}{m}$) in $C$. 
\end{lemma}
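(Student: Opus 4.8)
The plan is to run the eigenvalue (Rayleigh quotient) argument, specialised to the three-eigenvalue situation of Lemma~\ref{EigenvaluesSRG}. Write $A$ for the adjacency matrix of $X$, let $\mathbf{1}$ be the all-ones vector, and recall that $X$ is $k$-regular, so $A\mathbf{1}=k\mathbf{1}$. By Lemma~\ref{EigenvaluesSRG} the spectrum of $A$ is $\{k,\theta_1,\theta_2\}$ with $k>\theta_1>0>\theta_2=-m$, and on the hyperplane $\mathbf{1}^{\perp}$ the only eigenvalues are $\theta_1$ and $\theta_2$. Hence $A\preceq\theta_1 I$ on $\mathbf{1}^{\perp}$, that is, $x^{T}Ax\le\theta_1\,x^{T}x$ for every $x\perp\mathbf{1}$, with equality precisely when $Ax=\theta_1 x$. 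I would apply this to the balanced indicator $x=\chi_C-\tfrac{c}{v}\mathbf{1}$ of the clique $C$, where $c=|C|$ and $\chi_C$ is the characteristic vector of $C$; by construction $x\perp\mathbf{1}$.

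The computational core is to evaluate the two quadratic forms. Since $C$ is a clique, $\chi_C^{T}\chi_C=c$, $\chi_C^{T}A\chi_C=c(c-1)$, and $\mathbf{1}^{T}\chi_C=c$; combined with $A\mathbf{1}=k\mathbf{1}$ these give $x^{T}x=c\,(v-c)/v$ and $x^{T}Ax=c(c-1)-kc^{2}/v$. Substituting into $x^{T}Ax\le\theta_1 x^{T}x$ and multiplying through by $v/c>0$ yields the single inequality $(c-1-\theta_1)\,v\le(k-\theta_1)\,c$.

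The main obstacle is then purely algebraic: I must show that this inequality is equivalent to $c\le 1+\tfrac{k}{m}$. This is where the strongly regular structure enters, through the standard relations $\theta_1\theta_2=\mu-k$, $\theta_1+\theta_2=\lambda-\mu$ and the counting identity $k(k-1-\lambda)=(v-k-1)\mu$, which together give $\theta_1=(k-\mu)/m$ and $(k-\theta_1)(k-\theta_2)=\mu v$. Feeding the latter identity (and $\theta_1=(k-\mu)/m$) into $(c-1-\theta_1)v\le(k-\theta_1)c$ collapses the dependence on $v$ and leaves exactly $c\le 1+k/m$. I expect this bookkeeping to be the only delicate part, and it should be presented as a short self-contained calculation rather than invoked as a black box.

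Finally, for the equality statement I would trace the Rayleigh bound backwards: equality forces $Ax=\theta_1 x$, i.e.\ $A\chi_C=\theta_1\chi_C+\tfrac{c}{v}(k-\theta_1)\mathbf{1}$. Reading off the coordinate indexed by a vertex $u\notin C$, where $(\chi_C)_u=0$ and $(A\chi_C)_u$ counts the neighbours of $u$ in $C$, shows that every such $u$ has exactly $\tfrac{c}{v}(k-\theta_1)$ neighbours in $C$; evaluating this constant with the identities above gives the stated value $\mu/m$ and $c=1+k/m$. Conversely, if every vertex outside $C$ has the same number of neighbours in $C$, then $A\chi_C$ is a linear combination of $\chi_C$ and $\mathbf{1}$, so $x$ is a $\theta_1$-eigenvector and the bound is attained. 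This yields both directions of the ``if and only if''.
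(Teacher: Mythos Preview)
The paper does not give a proof of this lemma; it is stated with a citation to \cite[Proposition~1.3.2]{BCN89} and used as a black box, so there is no ``paper's own proof'' to compare against. Your proposal is the standard Rayleigh-quotient derivation of the Delsarte--Hoffman clique bound, and the forward direction is correct: the inequality $x^{T}Ax\le\theta_{1}\,x^{T}x$ on $\mathbf{1}^{\perp}$, together with the SRG identities $\theta_{1}=(k-\mu)/m$ and $(k-\theta_{1})(k-\theta_{2})=\mu v$, does collapse cleanly to $c\le 1+k/m$, and equality forces $Ax=\theta_{1}x$, from which the nexus $\mu/m$ drops out exactly as you describe.

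There is one small gap in your converse. From ``$A\chi_{C}$ is a linear combination of $\chi_{C}$ and $\mathbf{1}$'' you correctly deduce that $x$ is an $A$-eigenvector (the span of $\chi_{C}$ and $\mathbf{1}$ is two-dimensional and $A$-invariant, and $x$ spans its intersection with $\mathbf{1}^{\perp}$), but you assert without argument that the eigenvalue is $\theta_{1}$ rather than $\theta_{2}$. This is not automatic: the eigenvalue on $x$ equals $c-1-e$, where $e$ is the common nexus. If it were $\theta_{2}=-m$ one would have $e=c-1+m$; since $m>1$ for a primitive strongly regular graph (the setting of Lemma~\ref{EigenvaluesSRG} which you invoke), this gives $e>c$, contradicting $e\le |C|=c$. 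Adding this one line completes the argument.
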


A clique in strongly regular graph whose size meets the Delsarte-Hoffman bound is called a \emph{Delsarte clique}.

\subsection{Weight-distribution bound for strongly regular graphs}

Let $\theta$ be an eigenvalue of a graph $X$. A real-valued function on the vertex set of $X$, $f$, is called a \emph{$\theta$-eigenfunction} of $X$ if it has
at least one non-zero value, and
for any vertex $\gamma$ in $X$ the condition
\begin{equation}\label{LocalCondition}
\theta\cdot f(\gamma)=\sum_{\substack{\delta\in{X(\gamma)}}}f(\delta)
\end{equation}
holds, where $X(\gamma)$ is the set of neighbours of the vertex $\gamma$. Then, the \emph{support} $f$ is the set of vertices of $X$ on which $f$ takes a non-zero value.

The following lemma gives a lower bound for the number of non-zeroes for an eigenfunction of a strongly regular graph. This bound is presented in \cite[Corollary 1]{KMP16} for distance-regular graphs, and we take the case for which the bound applies to strongly regular graphs.

\begin{lemma}\label{WDB}
Let $X$ be a primitive strongly regular graph with parameters $(v,k,\lambda,\mu)$ and let $\theta$ be a non-principal eigenvalue of $X$. Then a  $\theta$-eigenfunction of $X$ has at least $$1+|\theta|+|\frac{(\theta-\lambda)\theta-k}{\mu}|$$ non-zeroes, which is equal to $2(\theta_1+1)$ if $\theta = \theta_1$ and is equal to $-2\theta_2$ if $\theta = \theta_2$.
\end{lemma}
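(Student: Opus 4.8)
The plan is to specialize the general weight-distribution bound of \cite{KMP16} to the diameter-$2$ setting, where it becomes short enough to spell out. First I would record the relevant algebra: for a primitive strongly regular graph $X$ with adjacency matrix $A$, the distance matrices are $A_0 = I$, $A_1 = A$ and $A_2 = J - I - A$, and the relation $A^2 = kI + \lambda A + \mu(J-I-A)$ rewrites $A_2$ as $p_2(A)$ with $p_2(t) = (t^2 - \lambda t - k)/\mu$; also $p_0(t) = 1$ and $p_1(t) = t$. The reason to isolate these polynomials is that a $\theta$-eigenfunction $f$ with $\theta$ \emph{non-principal} is orthogonal to the all-ones vector, so $Jf = 0$, and hence $A_i f = p_i(\theta) f$ for $i = 0,1,2$. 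This use of non-principality is essential: it is precisely what removes the $J$ term.

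Next I would carry out the standard support-counting step. Pick a vertex $x$ at which $|f|$ is maximal, and scale $f$ so that $f(x) = 1 = \max_z |f(z)|$. Evaluating $A_i f = p_i(\theta) f$ at $x$ gives $\sum_{y \in \Gamma_i(x)} f(y) = p_i(\theta)$ for $i = 0,1,2$, where $\Gamma_i(x)$ denotes the set of vertices at distance $i$ from $x$; by the triangle inequality and $|f| \le 1$, the number of support vertices lying in $\Gamma_i(x)$ is at least $|p_i(\theta)|$. Summing over $i$ (the sets $\Gamma_0(x),\Gamma_1(x),\Gamma_2(x)$ partition $V(X)$) yields $|\mathrm{supp}(f)| \ge 1 + |\theta| + |(\theta^2 - \lambda\theta - k)/\mu|$, which is the stated bound.

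Finally I would simplify $1 + |\theta| + |p_2(\theta)|$ for $\theta \in \{\theta_1,\theta_2\}$. Every non-principal eigenvalue satisfies $\theta^2 - (\lambda-\mu)\theta - (k-\mu) = 0$; substituting $\theta^2 = (\lambda-\mu)\theta + k - \mu$ into $p_2(\theta)$ collapses it to $p_2(\theta) = -(\theta+1)$, so the bound equals $1 + |\theta| + |\theta+1|$. For $\theta = \theta_1 > 0$ this is $1 + \theta_1 + (\theta_1+1) = 2(\theta_1+1)$. For $\theta = \theta_2 < 0$ one has $|\theta_2| + |\theta_2+1| = -2\theta_2 - 1$ provided $\theta_2 \le -1$, so the bound is $-2\theta_2$. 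The one genuinely separate point — and the main thing to be careful about — is this inequality $\theta_2 \le -1$ for a primitive strongly regular graph; I would deduce it from $\lambda \le k-1$ (true since the $\lambda$ common neighbours of two adjacent vertices $u,v$ all lie in $N(u)\setminus\{v\}$) together with the closed form $\theta_2 = \tfrac12\big((\lambda-\mu) - \sqrt{(\lambda-\mu)^2 + 4(k-\mu)}\big)$. Everything else is routine manipulation.
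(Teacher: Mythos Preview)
Your argument is correct. The paper itself does not supply a proof of this lemma: it simply records the statement as the diameter-$2$ case of \cite[Corollary~1]{KMP16}. What you have written is exactly that specialization made explicit --- the distance polynomials $p_0,p_1,p_2$, the identity $A_if=p_i(\theta)f$ for a non-principal eigenfunction, the maximum-value vertex trick, and the triangle-inequality count are precisely the ingredients of the Krotov--Mogilnykh--Potapov proof. One small remark: your invocation of $Jf=0$ is not actually needed once you have $A_2=(A^2-\lambda A-kI)/\mu$ as a genuine polynomial in $A$, but it does no harm. For the final step, your verification that $\theta_2\le -1$ via $\lambda\le k-1$ and the explicit quadratic formula is fine; a slightly cleaner route is to note that the complement of $X$ is again a primitive strongly regular graph whose positive non-principal eigenvalue is $-1-\theta_2$, forcing $\theta_2<-1$.
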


Lemma \ref{OptimalEigenfunctionsSRG} follows from \cite[Theorem 3, Theorem 4]{KMP16} and the fact that taking the complement of a strongly regular graph preserves the eigenspaces corresponding to the non-principal eigenvalues, which gives a description of the support of an eigenfunction of a strongly regular graph that meets the weight-distribution bound.
\begin{lemma}\label{OptimalEigenfunctionsSRG}
Let $X$ be a primitive strongly regular graph with eigenvalues $k>\theta_1 > 0 > \theta_2$. Then the following statements hold.\\
{\rm (1)} For a $\theta_2$-eigenfunction $f$, if the cardinality of support of $f$ meets the weight-distribution bound, then there exists an induced complete bipartite subgraph in $X$ with parts $T_0$ and $T_1$ of size $-\theta_2$. Moreover, up to multiplication by a constant, $f$ has value 1 on the vertices of $T_0$ and value $-1$ on the vertices of $T_1$. \\
{\rm (2)} For a $\theta_1$-eigenfunction $f$, if the cardinality of support of $f$ meets the weight-distribution bound, then there exists an induced pair of isolated cliques $T_0$ and $T_1$ in $X$ of size $-\overline{\theta_2} = -(-1-\theta_1) = 1+\theta_1$. Moreover, up to multiplication by a constant, $f$ has value 1 on the vertices of $T_0$ and value $-1$ on the vertices of $T_1$.
\end{lemma}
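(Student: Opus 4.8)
The plan is to prove statement (1) directly, by extracting the structural content from the equality case of the weight-distribution bound (Lemma \ref{WDB}), and then to deduce statement (2) from statement (1) by passing to the complement graph. Throughout, write $m := -\theta_2$ and recall that for a strongly regular graph the distance-$2$ adjacency matrix satisfies $A_2 = J - I - A$, so that on the non-principal eigenspace (where $J$ acts as zero) a $\theta$-eigenfunction $f$ obeys $\sum_{\delta \sim \gamma} f(\delta) = \theta f(\gamma)$ and $\sum_{\delta \not\sim \gamma,\, \delta \neq \gamma} f(\delta) = -(1+\theta) f(\gamma)$ for every vertex $\gamma$. These two identities, together with the definition of an eigenfunction, are the only inputs needed.

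For statement (1), I would normalise $f$ so that $M := \max_\gamma |f(\gamma)| = 1$ and fix a vertex $x_0$ with $f(x_0) = 1$. Splitting the remaining vertices into the neighbours and the non-neighbours of $x_0$ and applying the triangle inequality to the two identities above (each value having absolute value at most $1$), the number of support vertices among the neighbours of $x_0$ is at least $|\theta_2| = m$, and the number among the non-neighbours is at least $|1+\theta_2| = m-1$; here $\theta_2 < -1$ for a primitive strongly regular graph, so $|1+\theta_2| = -1-\theta_2$. Adding $x_0$ recovers the bound $1 + m + (m-1) = 2m = -2\theta_2$. Tightness forces equality at every step: $x_0$ has exactly $m$ support-neighbours, all of value $-1$ (the sign being fixed by $\sum_{\delta \sim x_0} f(\delta) = \theta_2 < 0$), and exactly $m-1$ support-non-neighbours, all of value $+1$. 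Since every vertex is either $x_0$, a neighbour, or a non-neighbour of $x_0$, it follows that $f$ takes only the values $0, \pm 1$ and that $|f| \equiv 1$ on the whole support.

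The key step is then to propagate this local picture to a global one. Set $T_0 := f^{-1}(1)$ and $T_1 := f^{-1}(-1)$; the counts above give $|T_0| = |T_1| = m$. Re-running the equality analysis at an arbitrary vertex $y \in T_1$ (equivalently, applying it to $-f$, for which $y$ is a maximising vertex) shows that $y$ has exactly $m$ support-neighbours, all lying in $T_0$, and exactly $m-1$ support-non-neighbours, all lying in $T_1 \setminus \{y\}$. As $|T_0| = m$ and $|T_1 \setminus \{y\}| = m-1$, this says precisely that $y$ is adjacent to \emph{every} vertex of $T_0$ and to no other vertex of $T_1$; the symmetric argument at each vertex of $T_0$ gives the reversed conclusion. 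Hence $T_0$ and $T_1$ are independent sets completely joined to one another, so $T_0 \cup T_1$ induces $K_{m,m}$ with $f = 1$ on $T_0$ and $f = -1$ on $T_1$, as claimed. This propagation is the step I expect to be most delicate: one must check that tightness at a single maximising vertex already forces $|f|$ constant on the entire support, and then verify that the adjacency counts read off at a second vertex match the part sizes \emph{exactly}, with no slack — it is this exact matching, rather than a mere inequality, that rigidifies the structure.

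Statement (2) follows by complementation. Since $\overline{A} = J - I - A$, the identity above gives $\overline{A} f = -(1+\theta_1) f$ on the non-principal eigenspace, so a $\theta_1$-eigenfunction $f$ of $X$ is an eigenfunction of the complement $\overline X$ for the eigenvalue $\overline{\theta_2} = -1-\theta_1$, which is the smallest eigenvalue of $\overline X$ (and $\overline X$ is again primitive). The support is unchanged, and by hypothesis it has size $2(\theta_1+1) = -2\overline{\theta_2}$, exactly the weight-distribution bound for $\overline{\theta_2}$ in $\overline X$. Applying statement (1) to $\overline X$ produces an induced $K_{1+\theta_1,\,1+\theta_1}$ in $\overline X$ with parts $T_0, T_1$ on which $f = \pm 1$. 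Translating edges back to $X$, each part (independent in $\overline X$) becomes a clique in $X$, and the complete join between the parts in $\overline X$ becomes the absence of any $T_0$--$T_1$ edge in $X$; thus $T_0$ and $T_1$ form a pair of isolated cliques of size $1+\theta_1$ in $X$, which completes the proof.
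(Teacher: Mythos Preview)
Your argument is correct. The paper itself does not give a detailed proof of this lemma: it simply records that part (1) follows from \cite[Theorems 3 and 4]{KMP16} and that part (2) is obtained from (1) by passing to the complement, using that complementation preserves the non-principal eigenspaces. Your treatment of (2) via $\overline{A}=J-I-A$ is exactly this complementation step, so there is no difference there.

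For (1) you supply a self-contained proof in place of the citation: the triangle-inequality analysis at a maximising vertex $x_0$ (forcing $f\in\{-1,0,1\}$ on the support, with the correct signs on neighbours and non-neighbours), followed by re-running the same equality analysis at a vertex of $T_1$ to pin down the adjacency pattern exactly. This is precisely the mechanism behind the cited results of Krotov--Mogilnykh--Potapov specialised to diameter~$2$, so your route is not genuinely different from the paper's---it simply unpacks what the reference provides. The propagation step you flagged as delicate is indeed the crux, and your counting (that the $m$ support-neighbours of $y\in T_1$ must coincide with all of $T_0$, and the $m-1$ support-non-neighbours with all of $T_1\setminus\{y\}$) is the right way to close it.
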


In view of Lemma \ref{OptimalEigenfunctionsSRG}, to show the tightness of the weight-distribution bound for non-principal eigenvalues it suffices to find a certain induced subgraph (a pair of isolated cliques $T_0$ and $T_1$ or a complete bipartite graph with parts $T_0$ and $T_1$) and show that each vertex outside of $T_0 \cup T_1$ has the same number of neighbours in $T_0$ and $T_1$.

\subsection{Polar spaces}

In this section, we will introduce polar spaces and present some basic results, most of which can be found in \cite{D16}. For further reference, see \cite{BS74,C15,T74,V59}.

Let $d$ be a positive integer, $q$ be a prime power, and $V=\mathbb{F}^d_q$. The \emph{Desarguesian projective space} $\PG{d-1}{q}$ is the point-line geometry with point set consisting of the 1-dimensional subspaces of $V$, line set consisting of the 2-dimensional subspaces of $V$, and incidence defined by containment.

Let $n$ be a positive integer. A \emph{(Veldkamp-Tits) polar space} of \emph{rank} $n$ is a pair $\Pi=(\mathcal{P},\Sigma)$, where the elements of $\mathcal{P}$ are called the \emph{points} of $\Pi$, the elements of $\Sigma$ is a set of subsets of $\mathcal{P}$ called \emph{singular subspaces} of $\Pi$, and the following hold:

\begin{enumerate}[(I)]
    \item\label{AxiomI} For all $L\in \Sigma$, the points and singular subspaces contained in $L$ form a projective space of dimension $d\in\{-1,0,\dots,n-1\}$. We will call $d$ the \emph{dimension} of $L$, and denote it by $\dim(L)$.
    \item\label{AxiomII} For all $L,M\in \Sigma$, $L\cap M\in \Sigma$.
    \item\label{AxiomIII} For all $L\in \Sigma$ such that $\dim(L)=n-1$ and point $p\in \mathcal{P}\setminus L$, there exists a unique singular subspace $M\in \Sigma$ such that $p\in M$ and $\dim(M\cap L)=n-2$. In this case, $L\cap M$ consists of the points of $L$ which are contained together with $p$ in some singular subspace of dimension $1$.
    \item\label{AxiomIV} There exists $L,M\in \Sigma$ such that $L\cap M=\emptyset$ and $\dim(L)=\dim(M)=n-1$.
\end{enumerate}

\begin{lemma}\label{lem:polarProperties}
    Let $\Pi=(\mathcal{P},\Sigma)$ be a polar space of rank $n$. Then:
    \begin{enumerate}
        \item\label{lem:polarProperties-maximalDimension} All maximal singular subspaces have dimension $n-1$;
        \item\label{lem:polarProperties-collinearPoints} Any set $X$ of pairwise collinear points is contained in a maximal singular subspace;
        \item For any singular subspace $L$, there are maximal singular subspaces $M_1,M_2$, such that $L=M_1 \cap M_2$.
    \end{enumerate}
\end{lemma}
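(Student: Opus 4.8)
The plan is to derive all three items directly from the axioms, leaning on one consequence of Axiom~(\ref{AxiomIII}) that I will call the \emph{hyperplane property}: if $L\in\Sigma$ has $\dim(L)=n-1$ and $p\in\mathcal P\setminus L$, then the singular subspace $M$ supplied by Axiom~(\ref{AxiomIII}) itself has dimension $n-1$ (it contains the dimension-$(n-2)$ subspace $M\cap L$ together with the point $p\notin L$, and by Axiom~(\ref{AxiomI}) its dimension cannot exceed $n-1$), and $M\cap L$ is exactly the set of points of $L$ collinear with $p$, i.e.\ lying with $p$ in a common singular subspace of dimension~$1$. I will also use, without comment, the elementary facts about the projective-space structure of a singular subspace guaranteed by Axiom~(\ref{AxiomI}): a proper subspace is always contained in one of dimension one larger, and a subspace of full dimension is the whole space.

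For item~(\ref{lem:polarProperties-maximalDimension}): by Axiom~(\ref{AxiomI}) every $N\in\Sigma$ has $\dim(N)\le n-1$, so it suffices to rule out a maximal $N$ of dimension at most $n-2$. Starting from a dimension-$(n-1)$ subspace $L_0$ (which exists by Axiom~(\ref{AxiomIV})), I claim that whenever $N\not\subseteq L_i$ for a dimension-$(n-1)$ subspace $L_i$, there is a dimension-$(n-1)$ subspace $L_{i+1}$ with $\dim(N\cap L_{i+1})>\dim(N\cap L_i)$: choose $p\in N\setminus L_i$ and let $M$ be the subspace given by Axiom~(\ref{AxiomIII}) for $L_i$ and $p$, so $\dim(M)=n-1$ by the hyperplane property; since all points of $N$ are pairwise collinear, every point of $N\cap L_i$ is collinear with $p$ and hence lies in $M\cap L_i\subseteq M$, so $N\cap M$ contains the subspace $N\cap L_i$ together with the point $p$, which lies in $N$ but not in $L_i$, giving $\dim(N\cap M)\ge\dim(N\cap L_i)+1$; put $L_{i+1}=M$. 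As $\dim(N\cap L_i)$ is bounded by $\dim(N)$, after finitely many steps we reach $L^{*}$ with $N\subseteq L^{*}$, and then $N$ is properly contained in a dimension-$(\dim(N)+1)$ subspace of $L^{*}$, contradicting maximality. Carried out without assuming $N$ maximal, the same construction shows that every singular subspace is contained in a maximal one — which by item~(\ref{lem:polarProperties-maximalDimension}) has dimension $n-1$; I will use this below.

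For item~(\ref{lem:polarProperties-collinearPoints}): induct on $|X|$ (which we may take to be finite). If $|X|\le 1$ then $X$ lies in a singular subspace of dimension at most $0$, hence in a maximal one. If $X=X'\cup\{p\}$ with $X'$ pairwise collinear and, by induction, contained in a maximal subspace $M$, then either $p\in M$ and we are done, or $p\notin M$, in which case the subspace $M'$ given by Axiom~(\ref{AxiomIII}) for $M$ and $p$ has dimension $n-1$ by the hyperplane property, contains $p$, and meets $M$ exactly in the set of points of $M$ collinear with $p$; this set contains $X'$, since every point of $X'$ lies in $M$ and is collinear with $p$, so $X=X'\cup\{p\}\subseteq M'$.

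For item~(3): by the above, $L$ lies in a maximal subspace, so it remains to show $L$ is the intersection of two maximal subspaces, which I would prove by induction on the rank $n$. When $\dim(L)=-1$ this is precisely Axiom~(\ref{AxiomIV}). When $\dim(L)\ge 0$, fix a point $p\in L$ and pass to the residue of $p$ — the singular subspaces of $\Pi$ containing $p$, with induced incidence — which should again be a polar space, of rank $n-1$, in which $L$ corresponds to a singular subspace $\bar L$; the inductive hypothesis gives two maximal singular subspaces of the residue meeting in $\bar L$, and lifting them back produces two maximal subspaces of $\Pi$ through $p$ meeting in $L$. The main obstacle is verifying that the residue really is a polar space of rank $n-1$: Axioms~(\ref{AxiomI})--(\ref{AxiomIII}) for it transfer routinely, but Axiom~(\ref{AxiomIV}) for the residue demands two maximal subspaces of $\Pi$ through $p$ meeting only in $p$, which must be bootstrapped separately from Axiom~(\ref{AxiomIV}) of $\Pi$. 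This verification — essentially an ``enough generic points'' argument — is the one genuinely technical ingredient, and the point at which I would otherwise simply defer to \cite{D16}.
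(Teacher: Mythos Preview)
The paper's own proof of this lemma is a one-line deferral to the literature: it cites \cite[Theorem~7.7(a)]{C15}, \cite[Theorem~7.3]{D16}, and \cite[Theorem~7.12]{D16} for the three items respectively and gives no argument of its own. Your write-up therefore goes well beyond what the paper supplies.

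Your arguments for items~(\ref{lem:polarProperties-maximalDimension}) and~(\ref{lem:polarProperties-collinearPoints}) are correct. The ``hyperplane property'' is derived cleanly, and the increasing-intersection procedure in item~(\ref{lem:polarProperties-maximalDimension}) is exactly the standard trick. One small caveat on item~(\ref{lem:polarProperties-collinearPoints}): the parenthetical ``which we may take to be finite'' is not justified in the abstract Veldkamp--Tits setting, where $X$ could be infinite; the usual fix is to observe that the smallest singular subspace containing a finite subset of $X$ can only grow, and since dimensions are bounded by $n-1$ one stabilises. In the present paper everything lives inside $\PG{d-1}{q}$ for finite $q$, so the point is moot, but it is worth flagging.

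For item~(3) your residue-induction outline is the standard route and is what the cited references do. You are right that Axiom~(\ref{AxiomIV}) for the residue at $p$ is the genuine work --- one needs two maximal subspaces through $p$ meeting only in $p$ --- and you candidly defer this step. That deferral is not worse than the paper's, which simply cites \cite[Theorem~7.12]{D16} outright; but as written your item~(3) is an outline rather than a proof, and if you want it to stand alone you would need to supply that bootstrap (e.g.\ by repeatedly applying Axiom~(\ref{AxiomIII}) to push a disjoint pair of maximals from Axiom~(\ref{AxiomIV}) through $p$ while controlling the intersection).
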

\begin{proof}
    See the proof of \cite[Theorem 7.7(a)]{C15}, \cite[Theorem 7.3]{D16}, and \cite[Theorem 7.12]{D16} repsectively.
\end{proof}
~\\

For sets $X_i$ such that the union consists of pairwise collinear points, denote by $\left[X_1,X_2\dots\right]$ the smallest singular subspace containing the points of the union of the sets $X_i$.  

Let $d$ be a positive integer and $q$ be a prime power. A polar space $\Pi=(\mathcal{P},\Sigma)$ is an \emph{embedded} polar space if $\mathcal{P}\subseteq \PG{d-1}{q}$ for some positive integer $d$. The \emph{order} of an embedded polar space $\Pi$ of rank $n$ is the pair $(q,t)$, where $t+1$ is this number of maximal singular subspaces which contain a given singular subspace of dimension $n-2$. A proof that $t$ is well-defined and positive can be found in \cite[Section 2.2.5]{BV22}. 

The embedded polar spaces of rank $n\geqslant 2$ have been fully classified. In Table \ref{tab:polarInfo} we list the embedded polar spaces of rank $n\geqslant 2$. For each polar space, the table contains the notation for the space, the dimension of the ambient vector space, and the order of the space. The last column contains a classical parameter $e$ for each of these spaces, which will be used later in counting arguments.

\begin{table}[ht]
\centering
\begin{tabular}{l|c|c|c|c}
  Name & Notation & dim(V) & Order & $e$\\
  \hline
  Symplectic & $\Sp{2n}{q}$ & $2n$ & $(q,q)$ & 1\\
  \hline
  Hyperbolic orthogonal & $\Or[+]{2n}{q}$ & $2n$ & $(q,1)$ &  0\\
  \hline
  Parabolic orthogonal & $\Or{2n+1}{q}$ & $2n+1$ & $(q,q)$ & 1\\
  \hline
  Elliptic orthogonal & $\Or[-]{2n+2}{q}$ & $2n+2$ & $(q,q^2)$ & 2\\
  \hline
  Small unitary & $\Un{2n}{\sqrt{q}}$ & $2n$ & $(q,q^{1/2})$ & 1/2\\
  \hline
  Large unitary & $\Un{2n+1}{\sqrt{q}}$ & $2n+1$ & $(q,q^{3/2})$ & 3/2\\
\end{tabular}
\caption{Details of the embedded polar spaces of rank $n$.}\label{tab:polarInfo}
\end{table}

In particular, we will be interested in the spaces $\Or[+]{d}{q}$ and $\Or[-]{d}{q}$. Let $V$ be a vector space over $\mathbb{F}_q$ of dimension $2m$, for positive integer $m$. Further, let $Q$ be a nondegenerate quadratic form on $V$ of type $\epsilon\in \{+1,-1\}$ (i.e. hyperbolic and elliptic respectively). In the remaining, we will identify $\epsilon$ with its sign when it is convenient for notational purposes. 

The polar space $\Or[\epsilon]{d}{q}$ has singular subspaces consisting of subspaces $W\subseteq V$ such that $Q(w)=0$ for all $w\in W$, and points consisting of the 0-dimensional singular subspaces.
  
\subsection{Polar and affine polar graphs}

Let $\Pi=(\mathcal{P},\Sigma)$ be a polar space of rank $n$. The \emph{collinearity graph} of $\Pi$, $\Gamma(\Pi)$, is the graph with vertex-set $\mathcal{P}$, and for which distinct vertices $x,y$ are adjacent if and only if $x,y\in L$ for some $L\in \Sigma$. 

\begin{lemma}\label{lem:polarGraphParameters}
    Let $\Pi$ be an embedded polar space of rank n and order $(q,t)$. Then $\Gamma(\Pi)$ is strongly regular with eigenvalues $\theta_1 = q^{n-1}-1$ and $\theta_2 = -tq^{n-2}-1$.

    Furthermore, a clique in $\Gamma(\Pi)$ with order meeting the Delsarte-Hoffman bound has $(q^n-1)/(q-1)$ vertices and is regular with nexus $(q^{n-1}-1)/(q-1)$.
\end{lemma}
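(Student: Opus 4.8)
The plan is to establish the strong regularity of $\Gamma(\Pi)$ together with its restricted eigenvalues, and then to read off the clique data from Axiom (III). \emph{Strong regularity and the parameters.} Write $\Gamma$ for the collinearity graph $\Gamma(\Pi)$ and, for a point $p$, write $p^{\perp}$ for the set of points equal or collinear to $p$. First I would check that $\Gamma$ has diameter $2$: it is not complete, by Axiom (IV), and if $p,r$ are non-collinear then applying Axiom (III) to a maximal singular subspace through $r$ yields a point collinear with both $p$ and $r$. Fixing a point $p$, the lines through $p$ are in bijection with the points of the residual polar space $\Pi_p$, which has rank $n-1$ and order $(q,t)$; as distinct lines through $p$ meet only in $p$ and each carries $q$ further points, the valency is $k=q\,|\mathcal{P}(\Pi_p)|$. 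If $p,r$ are non-collinear, then (by classical polar-space theory, cf.\ \cite{C15,D16}) the points of $p^{\perp}\cap r^{\perp}$ form a polar space of rank $n-1$ and order $(q,t)$, so $\mu:=|p^{\perp}\cap r^{\perp}|=|\mathcal{P}(\Pi_p)|$ is independent of the pair; in particular $k=q\mu$. If $p,r$ are collinear, then any common neighbour either lies on the line $[p,r]$ or spans a singular plane with it (here one uses Lemma~\ref{lem:polarProperties}(\ref{lem:polarProperties-collinearPoints})), so $\lambda$ is computed from the residue $\Pi_{[p,r]}$ of rank $n-2$ and is likewise constant. Hence $\Gamma$ is strongly regular, and its parameters become explicit from the point count $|\mathcal{P}(\Pi')|=(tq^{m-1}+1)\tfrac{q^{m}-1}{q-1}$ for a rank-$m$ polar space of order $(q,t)$.

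\emph{The eigenvalues.} The restricted eigenvalues of a strongly regular graph are the roots of $x^{2}-(\lambda-\mu)x-(k-\mu)=0$. Using $k=q\mu$ together with $\mu=(tq^{n-2}+1)\tfrac{q^{n-1}-1}{q-1}$ gives $k-\mu=(q-1)\mu=(tq^{n-2}+1)(q^{n-1}-1)$, and a short computation shows that the two roots are exactly $\theta_1=q^{n-1}-1$ and $\theta_2=-tq^{n-2}-1$ (equivalently, one checks $\theta_1+\theta_2=\lambda-\mu$ and $\theta_1\theta_2=\mu-k$). Alternatively one may cite the classical determination of the point graphs of finite classical polar spaces as strongly regular and simply match parameters against Table~\ref{tab:polarInfo}.

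\emph{Delsarte cliques and the nexus.} A maximal singular subspace $M$ is a $\PG{n-1}{q}$, hence a clique on $c:=\tfrac{q^{n}-1}{q-1}$ vertices, and by Lemma~\ref{lem:polarProperties}(\ref{lem:polarProperties-collinearPoints}) every clique of $\Gamma$ is contained in such an $M$. For $p\notin M$, Axiom (III) gives a unique maximal singular subspace $M'$ through $p$ with $M\cap M'$ equal to the set of points of $M$ collinear with $p$, and $M\cap M'$ is a $\PG{n-2}{q}$; hence $M$ is a regular clique with nexus $\nu:=\tfrac{q^{n-1}-1}{q-1}$. By the Delsarte--Hoffman bound (Lemma~\ref{HoffmanBound}) a regular clique attains the bound, so $c=1+k/(-\theta_2)$; consequently every clique attaining the bound has exactly $\tfrac{q^{n}-1}{q-1}$ vertices (so the extremal cliques are precisely the maximal singular subspaces) and, again by Lemma~\ref{HoffmanBound}, has nexus $\mu/(-\theta_2)$, which the previous paragraph identifies with $\nu=\tfrac{q^{n-1}-1}{q-1}$.

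\emph{Main obstacle.} The one genuinely laborious part is the first paragraph: extracting the exact values of $k,\lambda,\mu$ from the axioms — in particular the fact that $p^{\perp}\cap r^{\perp}$ is a rank-$(n-1)$ polar space of order $(q,t)$, and the generator and point counts — rather than invoking the strong regularity of $\Gamma$ as classical. If one does invoke it, only the bookkeeping of the last two paragraphs remains, where the single thing to watch is the consistent use of the order $(q,t)$ in place of the parameter $e$ of Table~\ref{tab:polarInfo} (so $t=q^{e}$).
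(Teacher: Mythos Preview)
Your argument is correct. The paper's own proof is essentially the shortcut you flag in your final paragraph: it cites \cite[Theorem 2.2.12]{BV22} for the strong regularity and the explicit parameters of $\Gamma(\Pi)$, and then invokes Lemmas~\ref{EigenvaluesSRG} and~\ref{HoffmanBound} for the eigenvalues and the clique data. Your route differs in that you actually derive $k,\lambda,\mu$ from the polar-space axioms via residues, verify $\theta_1+\theta_2=\lambda-\mu$ and $\theta_1\theta_2=\mu-k$ by hand, and establish the nexus directly from Axiom~(III) rather than only from the formula $\mu/(-\theta_2)$. This buys self-containment and makes transparent \emph{why} the Delsarte cliques are exactly the maximal singular subspaces; the paper's approach buys brevity and avoids the somewhat fiddly computation of $\lambda$. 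Since you already observe that invoking the classical result collapses everything to the last two paragraphs, you have in effect given both proofs.
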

\begin{proof}
    The strongly regular graph parameters of $\Gamma(\Pi)$ are derived in \cite[Theorem 2.2.12]{BV22}. The rest follows from Lemmas \ref{EigenvaluesSRG} and \ref{HoffmanBound}.
\end{proof}

~\\

Let $V$ be a vector space over $\mathbb{F}_q$ of dimension $2m$, for positive integer $m$. Further, let $Q$ be a nondegenerate quadratic form on $V$ of type $\epsilon\in \{+1,-1\}$. The \emph{polarisation} of $Q$, $B$, is the bilinear form such that $B(x,y)=Q(x+y)-Q(x)-Q(y)$ for all $x,y\in V$.

The \emph{affine polar graph} $\VO[\epsilon]{2m}{q}$ is the graph with vertex-set the elements of $V$, and for which distinct vertices $x,y$ are adjacent if and only if $Q(x-y)=0$. 

\begin{lemma}\label{lem:affine_parameters}
    Let $V$ be a vector space over $\mathbb{F}_q$ of dimension $d=2m-\epsilon+1$, for positive integer $m$. Further, let $Q$ be a nondegenerate quadratic form on $V$ of type $\epsilon\in \{+1,-1\}$. Then the affine polar graph $\VO[\epsilon]{2m}{q}$ is strongly regular with eigenvalues $\theta_1 = \epsilon(q-1)q^{m-1}-1$ and $\theta_2 = -\epsilon q^{m-1}-1.$    
 
    Furthermore, a clique in $\VO[\epsilon]{2m}{q}$ with order meeting the Delsarte-Hoffman bound has $1+(q^m-\epsilon)(q^{m-1}+\epsilon)/(\epsilon q^{m-1}+1)$ vertices and is regular with nexus $q^{m-1}(q^{m-1}+\epsilon)/(\epsilon q^{m-1}+1).$
\end{lemma}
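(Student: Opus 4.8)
The plan is to read everything off the arithmetic of the nondegenerate quadratic form $Q$, using that $\VO[\epsilon]{2m}{q}$ is by construction the Cayley graph on $(V,+)$ with connection set $S=\{x\in V\setminus\{0\}:Q(x)=0\}$, a set stable under $\mathbb{F}_q^{\times}$ and under the isometry group of $Q$.

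\textbf{Step 1: parameters and eigenvalues.} Since $\dim V$ is even, the polarisation $B$ of $Q$ is nondegenerate, so the characters of $(V,+)$ are the maps $x\mapsto\psi\bigl(B(a,x)\bigr)$ for $a\in V$ and a fixed nontrivial additive character $\psi$ of $\mathbb{F}_q$, and the corresponding Cayley-graph eigenvalue is $\lambda_a=\sum_{x\in S}\psi\bigl(B(a,x)\bigr)$. Writing the indicator of $Q(x)=0$ as $q^{-1}\sum_{t\in\mathbb{F}_q}\psi(tQ(x))$, completing the square in $tQ(x)+B(a,x)$ (the $t=0$ term vanishes for $a\neq 0$), and inserting the Gauss-type evaluation $\sum_{y\in V}\psi(tQ(y))=\epsilon q^{m}$ for $t\neq 0$, one finds that $\lambda_a$ depends only on whether $Q(a)=0$: it equals $\epsilon(q-1)q^{m-1}-1$ when $a\in S$ and $-\epsilon q^{m-1}-1$ when $Q(a)\neq 0$, while $\lambda_0=\lvert S\rvert=(q^{m}-\epsilon)(q^{m-1}+\epsilon)$. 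Thus the graph has three eigenvalues and is strongly regular — which can also simply be cited from \cite{BCN89} or \cite{BV22} — and by Lemma \ref{EigenvaluesSRG} its non-principal eigenvalues are exactly $\theta_1=\epsilon(q-1)q^{m-1}-1$ and $\theta_2=-\epsilon q^{m-1}-1$.

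\textbf{Step 2: Delsarte cliques.} If a clique contains $0$ then $Q$ vanishes on it, and $Q(x)=Q(y)=0$ forces $B(x,y)=\mp Q(x-y)=0$, so $B$ vanishes on it as well; an easy induction on the number of summands then shows the linear span of the clique is totally singular. Hence every clique lies in a coset of a totally singular subspace and has at most $q^{w}$ vertices, where $w$ is the Witt index of $Q$, while every coset of a maximal totally singular subspace $W$ is a clique with exactly $q^{w}$ vertices. To decide when such a coset is a Delsarte clique I would compute its nexus: for $v\notin W$ the neighbours of $v$ in $W$ are the $w\in W$ with $B(v,w)$ equal to the fixed scalar $\mp Q(v)$, i.e. a fibre of the linear functional $w\mapsto B(v,w)$ on $W$, and this functional is nonzero precisely when $v\notin W^{\perp}$. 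For $\epsilon=+1$ one has $w=m$ and $W^{\perp}=W$, so the functional is nonzero for every $v\notin W$, the nexus is the constant $q^{w-1}=q^{m-1}$, and by Lemma \ref{HoffmanBound} these cosets are precisely the Delsarte cliques; they have $q^{m}$ vertices and nexus $q^{m-1}$, which are exactly the closed forms $1+(q^{m}-\epsilon)(q^{m-1}+\epsilon)/(\epsilon q^{m-1}+1)$ and $q^{m-1}(q^{m-1}+\epsilon)/(\epsilon q^{m-1}+1)$ of the statement (equivalently $1+\lvert S\rvert/(-\theta_2)$ and $\mu/(-\theta_2)$ with $\mu=q^{m-1}(q^{m-1}+\epsilon)$).

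\textbf{Expected main obstacle.} The one genuinely computational ingredient is the Gauss-sum evaluation $\sum_y\psi(tQ(y))=\epsilon q^{m}$ in Step 1, which must be done with care for the two types $\epsilon=\pm 1$ and in characteristic $2$, where $B$ is alternating and the square-completion has to be reorganised; the rest of Step 1 is bookkeeping. The subtler structural point is in Step 2: the equality $W^{\perp}=W$ holds only for $\epsilon=+1$; for $\epsilon=-1$ one has $w=m-1$ and $W\subsetneq W^{\perp}$, the cosets of maximal totally singular subspaces are not regular cliques, and in fact $1+\lvert S\rvert/(-\theta_1)$ is then not an integer, so no clique attains the Delsarte--Hoffman bound and the corresponding assertion of the lemma is vacuous — a point worth flagging explicitly in the write-up.
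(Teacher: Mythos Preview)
The paper does not actually supply a proof of Lemma~\ref{lem:affine_parameters}; it is stated as a known fact (in the same spirit as Lemma~\ref{lem:polarGraphParameters}, which just cites \cite{BV22}) and then used freely in Section~\ref{CharacterisationPositiveEigenvalue}. So there is no argument in the paper to compare yours against.

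Your plan is the standard one and is sound. Step~1 is the usual Cayley--graph/character computation for $\VO[\epsilon]{2m}{q}$; the Gauss-sum identity $\sum_{y}\psi(tQ(y))=\epsilon q^{m}$ and the resulting two non-principal eigenvalues can equally well be lifted from \cite[\S3.5--3.6]{BV22} or \cite{BCN89}, so the characteristic-$2$ issue you flag need not be fought by hand. Step~2 is correct for $\epsilon=+1$: cliques through $0$ span totally singular subspaces, a maximal one $W$ has $\dim W=m$ and $W^{\perp}=W$, hence each coset $v+W$ is a regular clique with nexus $q^{m-1}$, and this matches both the Delsarte--Hoffman value $\mu/(-\theta_2)$ and the closed formulae in the lemma.

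Your diagnosis of the $\epsilon=-1$ case is also right and worth stating explicitly in the write-up: here the Witt index is $m-1$, $W\subsetneq W^{\perp}$, and the cosets $v+W$ are \emph{not} regular cliques (cf.\ Lemma~\ref{lem:ellipticNeighbours} later in the paper, where the nexus takes the two values $q^{m-2}$ and $0$). The Delsarte--Hoffman size $1+k/(-\theta_{\min})$ is then non-integral, so the ``Furthermore'' clause is vacuous for $\epsilon=-1$. One cosmetic point: the labelling $\theta_1,\theta_2$ in the lemma clashes with the convention $\theta_1>0>\theta_2$ of Lemma~\ref{EigenvaluesSRG} when $\epsilon=-1$ (indeed Lemma~\ref{lem:ellipticConstruction} uses $\theta_1=q^{m-1}-1$), so you may want to phrase your conclusion as ``the non-principal eigenvalues are $\epsilon(q-1)q^{m-1}-1$ and $-\epsilon q^{m-1}-1$'' rather than committing to which one is $\theta_1$.
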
    
~\\

\section{Characterisation of optimal $\theta_1$-eigenfunctions}\label{CharacterisationPositiveEigenvalue}
In this section we will characterise the optimal eigenfunctions for the positive non-principal eigenvalue $\theta_1$ of each graph defined in Sections \ref{Preliminaries}. The constructions and proofs in Sections \ref{ssec:theta1_polar} and \ref{sssec:theta1_hyperbolic}  are closely related, while the construction in Section \ref{sssec:theta1_elliptic} is slightly different, but both constructions use maximal singular subspaces as there basis. We will use the properties of polar spaces to count exactly how many such eignefunctions these graphs have.

\subsection{The $\theta_1$-eigenfunctions in polar graphs}\label{ssec:theta1_polar}

In this section, we will characterise the optimal $\theta_1$-eigenfunctions of the polar graphs. We start by defining some notation for certain subsets of the singular subspaces of a given polar space, which we will find useful.

Let $\Pi=(\mathcal{P},\Sigma)$ be a polar space. For any $L\in\Sigma$, we define
\begin{equation*}
    \Sigma_L = \left\{M\in \Sigma: L\subsetneq M,\dim(M)=n-1\right\}.
\end{equation*}
Note that $\Sigma_L$ is a set of maximal singular subspaces of $\Pi$. If $\Pi$ is an embedded polar space with order $(q,t)$, then for any $L\in\Sigma$ such that $\dim(L)=n-2$, we have $|\Sigma_L|=t+1\ge 2$. For each $L\in \Sigma$, we define
\begin{equation*}
    \Delta_L  = \left\{\{M\setminus L, N\setminus L\}: M,N\in \Sigma_L,M\neq N\right\}.
\end{equation*}

We show that when $\dim(L)=n-2$, each element of $\Delta_L$ define an optimal $\theta_1$-eigenfunction.

\begin{lemma}\label{lem:notcollinear}
    Let $\Pi=(\mathcal{P},\Sigma)$ be an embedded polar space of rank $n$ and order $(q,t)$, $L$ be a singular subspace of $\Pi$ of dimension $n-2$, and $M,N\in \Sigma_L$ be distinct. Then:
    \begin{enumerate}
        \item\label{lem:notcollinear-collinear} for any $x,y\in M\setminus L$, $x$ and $y$ are collinear; 
        \item\label{lem:notcollinear-notcollinear} for any $x\in M\setminus L,y\in N\setminus L$, $x$ and $y$ are not collinear;
        \item\label{lem:notcollinear-eigenfunction} the function $f:\mathcal{P}\to \mathbb{R}$, such that
        \begin{equation*}
            f(z) = 
                \left\{
                  \begin{array}{cc}
                    1, & z\in M \setminus L; \\
                    -1, & z\in N \setminus L; \\
                    0, & \text{otherwise}.
                  \end{array}
                \right.
        \end{equation*}
        is a $\theta_1$-eigenfuction of $\Gamma(\Pi)$.
    \end{enumerate}

\end{lemma}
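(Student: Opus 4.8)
The strategy is to establish items (1) and (2) directly from the polar space axioms and the structure of singular subspaces, and then derive item (3) by a local counting argument verifying the eigenfunction condition (\ref{LocalCondition}) at every vertex $z\in\mathcal{P}$.

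For item (1): any two points $x,y\in M\setminus L$ lie in the singular subspace $M$, hence are collinear by the definition of the collinearity graph (or equal). For item (2): suppose $x\in M\setminus L$ and $y\in N\setminus L$ are collinear. Then $[x,y]$ is a singular subspace of dimension at most $1$, and since $M$ and $N$ are maximal singular subspaces of dimension $n-1$ both containing $L$ with $\dim L = n-2$, I would argue that $[L,x]=M$ and $[L,y]=N$, so collinearity of $x$ and $y$ would force the pairwise-collinear set $L\cup\{x,y\}$ to lie in a single maximal singular subspace by Lemma \ref{lem:polarProperties}(\ref{lem:polarProperties-collinearPoints}); such a subspace would then contain both $M=[L,x]$ and $N=[L,y]$, hence equal both, contradicting $M\neq N$. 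The key input here is that $M$ is the unique maximal singular subspace containing $L$ and $x$ — this follows from $\dim(M\cap N)=n-2$ for distinct $M,N\in\Sigma_L$ (so $L = M\cap N$ is exactly the intersection) together with Axiom (\ref{AxiomIII}) applied to the hyperplane $N$ and the point $x\notin N$.

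For item (3), I would fix an arbitrary vertex $z$ and compute $\sum_{\delta\in\Gamma(\Pi)(z)} f(\delta)$, which by definition of $f$ equals $|\Gamma(z)\cap(M\setminus L)| - |\Gamma(z)\cap(N\setminus L)|$. I split into cases according to where $z$ sits relative to $L$, $M$, $N$. By Lemma \ref{lem:polarGraphParameters} the graph is strongly regular with $\theta_1 = q^{n-1}-1$, and a Delsarte clique has $(q^n-1)/(q-1)$ vertices with nexus $(q^{n-1}-1)/(q-1)$; note $M$ and $N$ are maximal singular subspaces, hence projective spaces of dimension $n-1$, hence cliques of exactly Delsarte size, so every vertex outside $M$ has exactly $(q^{n-1}-1)/(q-1)$ neighbours in $M$ (and likewise for $N$). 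The cases are: (a) $z\in M\setminus L$ — then all of $M\setminus\{z\}$ is in $z$'s neighbourhood, giving $|M|-1 - |L| = q^{n-1}$ neighbours in $M\setminus L$; and by item (2) no point of $N\setminus L$ is adjacent to $z$, while $z\notin L$ so $f(z)$-value is $1$, and we need the sum to be $\theta_1\cdot 1 = q^{n-1}-1$ — here I must be careful to also count that $z$ is adjacent to all of $L$, which contributes $0$ to the sum but must be excluded; recomputing, neighbours of $z$ in $M\setminus L$ number $|M\setminus L| - 1 = (q^n-1)/(q-1) - (q^{n-1}-1)/(q-1) - 1 = q^{n-1}-1$, matching $\theta_1$. (b) $z\in N\setminus L$ — symmetric, sum is $-(q^{n-1}-1) = \theta_1\cdot(-1)$. (c) $z\in L$ — then $z$ is collinear with every point of $M$ and of $N$, so the sum is $|M\setminus L| - |N\setminus L| = 0 = \theta_1\cdot 0$. (d) $z\notin L\cup M\cup N$ — then $f(z)=0$ and we need the neighbour-counts in $M\setminus L$ and in $N\setminus L$ to be equal; since $z\notin M$ the number of neighbours of $z$ in $M$ is the Delsarte nexus $(q^{n-1}-1)/(q-1)$, and the number of these lying in $L$ depends only on $z$'s relation to $L$, which is symmetric between the $M$-count and $N$-count (both $M$ and $N$ induce the same nexus, and $L\subseteq M\cap N$), so the two counts agree and the sum is $0$. (A sub-point in case (d): if $z\in M$ but $z\notin L$ this is case (a); if $z$ lies in neither maximal subspace we use the regularity of both Delsarte cliques; one should also treat $z\in L$ as already done.)

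The main obstacle I anticipate is case (d) — showing that an arbitrary vertex $z$ outside $M\cup N$ has the same number of neighbours in $M\setminus L$ as in $N\setminus L$. The equality of neighbour-counts in $M$ and in $N$ is immediate from both being Delsarte cliques with the same nexus; the subtlety is the correction term $|\Gamma(z)\cap L|$, which must be the same whether we subtract it from the $M$-count or the $N$-count — but this is automatic since it is literally the same quantity (one fixed set $L$, one fixed vertex $z$). So in fact case (d) is cleaner than it first appears, and the only genuine care needed is the bookkeeping of Delsarte sizes in cases (a)–(b). An alternative, slicker route for item (3): observe that $f$ is, up to sign, the indicator difference $\mathds{1}_{M} - \mathds{1}_{N}$ (since $f$ vanishes on $L = M\cap N$ and the $L$-parts cancel), and that for any Delsarte clique $K$ the function $\mathds{1}_K - \tfrac{|K|}{?}\cdots$ relates to eigenvectors; but I would most likely just carry out the four-case computation above, as it is short and self-contained given Lemma \ref{lem:polarGraphParameters}.
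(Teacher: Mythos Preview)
Your proposal is correct and follows essentially the same approach as the paper: part (1) is immediate from $x,y\in M$; part (2) is argued via the pairwise-collinear set $L\cup\{x,y\}$ being contained in a maximal singular subspace (the paper phrases this equivalently as the chain $L\subsetneq [y,L]=N\subsetneq [x,y,L]$ contradicting maximality of $N$); and part (3) is handled by the same four-case verification, using that $M$ and $N$ are Delsarte cliques with common nexus so that any $z\notin M\cup N$ has equally many neighbours in $M\setminus L$ and $N\setminus L$ after subtracting the shared term $|\Gamma(z)\cap L|$.
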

\begin{proof}
    1. This follows immediately from Axiom \eqref{AxiomI} and $x,y\in M$.
    
    2. Suppose otherwise. Then as $\left[ y, L\right]=N$ and $x\notin N$, we have a chain of singular subspaces $L\subsetneq \left[ y, L\right] \subsetneq\left[ x, y, L\right]$. By Lemma \ref{lem:polarProperties} \ref{lem:polarProperties-maximalDimension}, this contradicts maximality of $N$.

    3. First note that $\theta_1=q^{n-1}-1=|M\setminus L|-1=|N\setminus L|-1$. As all elements contained in a singular subspace are collinear, condition \eqref{LocalCondition} is satisfied for $z\in L$. By parts \ref{lem:notcollinear-collinear} and \ref{lem:notcollinear-notcollinear}, we see that the condition  \eqref{LocalCondition} is satisfied for $z\in (M\cup N)\setminus L$.
    
    Consider $z\notin (M\cup N)$. Note that as  $M,N$ have dimension $n-1$, they contain $(q^n-1)/(q-1)$ points. Therefore $M$ and $N$ are regular cliques in $\Gamma(\Pi)$ by Axiom \eqref{AxiomI} and Lemma \ref{lem:polarGraphParameters}. In particular, $z$ is adjacent to the same number of vertices in $M\setminus L$ and $N\setminus L$, and thus condition \eqref{LocalCondition} is satisfied for $z$. The result follows.
    
\end{proof}

Next, we show that any pair of isolated cliques of the sizes given in the above example must come from an element of $\Delta_L$ for some singular subspace $L$ of dimension $n-2$.

\begin{proposition}\label{prop:polarCharacterisation}
    Let $\Pi=(\mathcal{P},\Sigma)$ be an embedded polar space of rank $n\geqslant 2$ with order $(q,t)$. Then $T=\{T_0,T_1\}$ is a pair of isolated cliques of size $\theta_1+1$ in $\Gamma(\Pi)$ if and only if $T\in \Delta_L$ for some $L\in \Sigma$ with $\dim(L)=n-2$.
\end{proposition}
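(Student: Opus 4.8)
The plan is to prove the nontrivial (forward) direction: if $T=\{T_0,T_1\}$ is a pair of isolated cliques of size $\theta_1+1 = q^{n-1}$ in $\Gamma(\Pi)$, then $T\in\Delta_L$ for some singular subspace $L$ of dimension $n-2$. The reverse direction is already covered by Lemma~\ref{lem:notcollinear}, which shows each element of $\Delta_L$ gives such a pair of isolated cliques, so only the forward implication needs work.

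First I would observe that, since $T_0$ is a clique in $\Gamma(\Pi)$, its vertices are pairwise collinear, so by Lemma~\ref{lem:polarProperties}\ref{lem:polarProperties-collinearPoints} the set $T_0$ is contained in a maximal singular subspace $M$; similarly $T_1\subseteq N$ for a maximal singular subspace $N$. Both $M$ and $N$ have dimension $n-1$ and hence $(q^n-1)/(q-1)$ points. I would then set $L := M\cap N$, which is a singular subspace by Axiom~\eqref{AxiomII}, and aim to show that $\dim(L)=n-2$ and that $T_0 = M\setminus L$, $T_1 = N\setminus L$.

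The heart of the argument is a counting/dimension computation. Since $T_0$ and $T_1$ are ``isolated'' cliques, no vertex of $T_0$ is collinear with any vertex of $T_1$; in particular $T_0\cap N = \emptyset$ and $T_1\cap M = \emptyset$, so $T_0\subseteq M\setminus L$ and $T_1\subseteq N\setminus L$. Now $|M\setminus L| = (q^n-1)/(q-1) - (q^{\dim(L)+1}-1)/(q-1)$, and this must be at least $|T_0| = q^{n-1}$; the same bound for $N\setminus L$ forces $\dim(L)\le n-2$. For the reverse inequality I would use the fact that $M$ and $N$ are Delsarte cliques (by Lemma~\ref{lem:polarGraphParameters}, since they attain the Delsarte--Hoffman bound), so every vertex outside $M$ has a constant nexus $(q^{n-1}-1)/(q-1)$ into $M$: in particular every vertex of $T_1\subseteq N\setminus M$ sees exactly $(q^{n-1}-1)/(q-1)$ vertices of $M$, all of which must lie in $L = M\cap N$ since $T_1$ is isolated from $T_0 \subseteq M\setminus L$ and, more carefully, from all of $M\setminus L$ once we know $T_0 = M\setminus L$. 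This pins down $|L| \ge (q^{n-1}-1)/(q-1)$, hence $\dim(L)\ge n-2$, giving $\dim(L)=n-2$ exactly; then $|M\setminus L| = |N\setminus L| = q^{n-1} = |T_0| = |T_1|$, so $T_0 = M\setminus L$ and $T_1 = N\setminus L$, which is precisely the statement $T\in\Delta_L$.

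The main obstacle I anticipate is the logical ordering in the last step: establishing that a vertex of $T_1$ sees only vertices of $L$ inside $M$ (rather than some vertices of $M\setminus L$) seems to require knowing $T_0 = M\setminus L$, while that in turn seems to depend on the nexus count. I would resolve this circularity by arguing with the maximal subspaces $M,N$ directly rather than with $T_0,T_1$: take any $x\in N\setminus M$ (which exists and in particular lies in $N\setminus L$), note $x$ is not collinear with any point of $M\setminus L$ by the chain-of-subspaces argument used in Lemma~\ref{lem:notcollinear}\ref{lem:notcollinear-notcollinear} (if $x$ were collinear with $y\in M\setminus L$ we would get $L\subsetneq [y,L]\subsetneq[x,y,L]$, contradicting maximality), so $x$'s $(q^{n-1}-1)/(q-1)$ neighbours in $M$ all lie in $L$, forcing $|L|\ge(q^{n-1}-1)/(q-1)$ and hence $\dim(L) = n-2$ independently of $T$. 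Once $\dim(L)=n-2$ is known, $M\setminus L$ and $N\setminus L$ are isolated cliques of size exactly $q^{n-1}=\theta_1+1$ by Lemma~\ref{lem:notcollinear}, and since $T_0\subseteq M\setminus L$, $T_1\subseteq N\setminus L$ with matching cardinalities, equality holds and $T = \{M\setminus L, N\setminus L\}\in\Delta_L$.
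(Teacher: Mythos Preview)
Your overall plan is sound and the backward direction is fine, but the resolution you propose for the circularity in the forward direction does not work. The chain-of-subspaces argument you borrow from Lemma~\ref{lem:notcollinear}\ref{lem:notcollinear-notcollinear} needs $\dim(L)=n-2$ as a hypothesis: in that lemma $[y,L]$ equals the maximal subspace $N$, and the strict chain $N\subsetneq [x,y,L]$ contradicts maximality of $N$. In your setting $L=M\cap N$ could a priori have dimension $<n-2$ (indeed, by Axiom~\eqref{AxiomIV} there exist disjoint maximal singular subspaces), and then $[y,L]$ is a proper subspace of $M$, so the chain $L\subsetneq[y,L]\subsetneq[x,y,L]$ contradicts nothing. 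Concretely, if $M\cap N=\emptyset$ and $x\in N$, Axiom~\eqref{AxiomIII} guarantees $x$ is collinear with an $(n-2)$-dimensional subspace of $M$, all of whose points lie in $M\setminus L=M$; so your claim that $x\in N\setminus M$ has no neighbour in $M\setminus L$ is false in general.

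The gap is easily repaired without abandoning your nexus approach, and the resulting argument is close in spirit to the paper's. Each $x\in T_1$ has exactly $(q^{n-1}-1)/(q-1)$ neighbours in $M$, none of which lie in $T_0$ by isolation; since $|M\setminus T_0|=(q^{n-1}-1)/(q-1)$ as well, every point of $M\setminus T_0$ is collinear with every point of $T_1$. Now reverse the roles: any $v\in M\setminus T_0$ with $v\notin N$ would have at least $|T_1|=q^{n-1}$ neighbours in $N$, exceeding the nexus $(q^{n-1}-1)/(q-1)$ of $N$. Hence $M\setminus T_0\subseteq N$, so $M\setminus T_0\subseteq L$, forcing $|L|\ge (q^{n-1}-1)/(q-1)$ and $\dim(L)=n-2$. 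The paper obtains the same conclusion by invoking Axiom~\eqref{AxiomIII} directly (for $p\in T_0$ the unique maximal subspace through $p$ meeting $M_1$ in dimension $n-2$ must meet it in $M_1\setminus T_1$ by pigeonhole), but the counting is essentially the same.
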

\begin{proof}
    ($\implies$) Suppose $T=\{T_0,T_1\}$ is a pair of isolated cliques of size $\theta_1+1$ in $\Gamma(\Pi)$. Then we know $|T_0|=|T_1|=\theta_1+1=q^{n-1}$ by Lemma \ref{lem:polarGraphParameters}. As this is larger than the size of a singular subspace of dimension $n-2$, Lemma \ref{lem:polarProperties} \ref{lem:polarProperties-collinearPoints} implies that $M_i=\left[T_i\right]$ is the unique maximal clique containing $T_i$. 

    Consider $p\in T_0$. Note that $M_0\neq M_1$ and $p\notin M_1$ because $T_0$ and $T_1$ are isolated. By Axiom \eqref{AxiomIII}, there is a unique singular subspace $N_1$ such that $p\in N_1$ and $N_1\cap M_1$ has dimension $n-2$. But $|M_1\setminus T_1|=(q^{n-1}-1)/(q-1)$, which is the size of $N_1\cap M_1$. As $T_0$ and $T_1$ are isolated, $N_1\cap M_1\subseteq M_1\setminus T_1$, and by pigeonhole principle we have equality. 
    
    As this holds for all $p\in T_0$, we see that $T_0\cup M_1\setminus T_1$ is a maximal singular subspace. But by uniqueness of $M_0$, we have $M_0=T_0\cup (M_1\setminus T_1)$, and $M_0\cap M_1=M_0\setminus T_0= M_1\setminus T_1$. Also, $L=M_0\cap M_1$ is a singular subspace of size $|M_0\setminus T_0|=(q^{n-1}-1)/(q-1)$, showing that $\dim(L)=n-2$. We have shown that $T=\{M_0\setminus L,M_1\setminus L\}$, and $T\in \Delta_L$.

    ($\impliedby$) Let $T=\{M_0\setminus L, M_1\setminus L\}\in \Delta_L$ for singular subspace $L$ of dimension $n-2$, and let $T_0=M_0\setminus L,T_1=M_1\setminus L$. By Lemma \ref{lem:notcollinear}, the sets $T_0$ and $T_1$ are isolated cliques. By Axiom \eqref{AxiomI} we have $|M_i|=(q^n-1)/(q-1), |L|=(q^{n-1}-1)/(q-1)$, and $|M_i\setminus L|=q^{n-1}=\theta_1 + 1$. 
\end{proof}

The two results above gives a characterisation of optimal $\theta_1$-eigenfunctions of a polar graph: they are the difference of indicator functions $1_A - 1_B$, where $A,B\in \Delta_L$ for some $(n-2)$-dimensional singular subspace. Now we give count the number of such distinct functions by calculating the size of the sets $\Delta_L$ and their intersections.

\begin{lemma}\label{lem:SigmaDelta}
    Let $\Pi=(\mathcal{P},\Sigma)$ be an embedded polar space of rank $n\geqslant 2$ and order $(q,t)$. Further let $L_0,L_1\in \Sigma$ be distinct, with $\dim(L_0)=\dim(L_1)=n-2$. Then:
    \begin{enumerate}
        \item\label{lem:SigmaDelta-Sigma} $|\Sigma_{L_0}\cap\Sigma_{L_1}|\leqslant 1$; 
        \item\label{lem:SigmaDelta-Delta}$\Delta_{L_0}\cap\Delta_{L_1}=\emptyset$;
        \item\label{lem:SigmaDelta-Size} $|\Delta_{L_0}|=\binom{t+1}{2}$. 
    \end{enumerate}
\end{lemma}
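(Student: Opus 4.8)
The plan is to prove the three items in order, since each feeds into the next. For item~\ref{lem:SigmaDelta-Sigma}, suppose for contradiction that $M,N$ are two distinct maximal singular subspaces, each containing both $L_0$ and $L_1$. Since $L_0,L_1\subseteq M\cap N$ and $L_0\ne L_1$, the intersection $M\cap N$ (which is a singular subspace by Axiom~\eqref{AxiomII}) properly contains each $L_i$, so $\dim(M\cap N)\ge n-1$; but $M\cap N$ is contained in the maximal subspace $M$, forcing $M\cap N=M=N$, a contradiction. Alternatively, and perhaps more cleanly: $[L_0,L_1]$ is a singular subspace of dimension $\ge n-1$ contained in any $M\in\Sigma_{L_0}\cap\Sigma_{L_1}$, hence equals that $M$, so there is at most one such $M$.

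For item~\ref{lem:SigmaDelta-Delta}, suppose $\{M\setminus L_0,N\setminus L_0\}=\{M'\setminus L_1,N'\setminus L_1\}$ lies in both $\Delta_{L_0}$ and $\Delta_{L_1}$. Matching up the parts, we may assume $M\setminus L_0=M'\setminus L_1$ and $N\setminus L_0=N'\setminus L_1$. Taking the smallest singular subspace containing $M\setminus L_0$ recovers $M$ (this is $[M\setminus L_0]$, and it equals $M$ because $M\setminus L_0$ is a set of $q^{n-1}$ pairwise collinear points, too many to lie in a proper subspace of $M$, by the argument already used in Proposition~\ref{prop:polarCharacterisation}); likewise $[M'\setminus L_1]=M'$, so $M=M'$ and similarly $N=N'$. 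Then $L_0=M\cap N=M'\cap N'=L_1$, contradicting $L_0\ne L_1$. I should double check that $M\cap N=L_0$ when $\{M\setminus L_0,N\setminus L_0\}\in\Delta_{L_0}$: indeed $L_0\subseteq M\cap N$, and if the containment were proper then $\dim(M\cap N)\ge n-1$ would force $M=N$, excluded in the definition of $\Delta_{L_0}$; so $M\cap N=L_0$, and the argument closes.

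Item~\ref{lem:SigmaDelta-Size} is then a direct count: by the definition of the order $(q,t)$ of an embedded polar space, $|\Sigma_{L_0}|=t+1$ when $\dim(L_0)=n-2$ (as recorded just before the lemma). The map $\{M,N\}\mapsto\{M\setminus L_0,N\setminus L_0\}$ from unordered pairs of distinct elements of $\Sigma_{L_0}$ onto $\Delta_{L_0}$ is a bijection: it is surjective by definition of $\Delta_{L_0}$, and injective because $M\setminus L_0$ determines $M=[M\setminus L_0]$ as above. Hence $|\Delta_{L_0}|=\binom{t+1}{2}$.

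I expect item~\ref{lem:SigmaDelta-Delta} to be the only place requiring genuine care: the crux is the reconstruction $M=[M\setminus L]$, i.e. that stripping away an $(n-2)$-subspace from a maximal subspace leaves enough points to recover it, together with the identity $M\cap N=L$ for a pair in $\Delta_L$. Items~\ref{lem:SigmaDelta-Sigma} and~\ref{lem:SigmaDelta-Size} are short once the span/dimension bookkeeping from Lemma~\ref{lem:polarProperties} and the definition of the order are in hand.
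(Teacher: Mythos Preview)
Your proposal is correct and follows essentially the same approach as the paper: part~\ref{lem:SigmaDelta-Sigma} is proved by the dimension squeeze on $M\cap N$, part~\ref{lem:SigmaDelta-Delta} via the reconstruction $M=[M\setminus L]$ from the fact that $|M\setminus L|=q^{n-1}$ exceeds the size of an $(n-2)$-dimensional subspace, and part~\ref{lem:SigmaDelta-Size} by counting pairs in $\Sigma_{L_0}$. The only cosmetic difference is your finish of part~\ref{lem:SigmaDelta-Delta}: you conclude $L_0=M\cap N=M'\cap N'=L_1$ directly, whereas the paper instead observes that $M,N\in\Sigma_{L_0}\cap\Sigma_{L_1}$ and invokes part~\ref{lem:SigmaDelta-Sigma}; both are equally valid.
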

\begin{proof}
    1. Suppose $M,N\in \Sigma_{L_0}\cap\Sigma_{L_1}$. Then $M\cap N$ is a singular subspace by Axiom \eqref{AxiomII}, of dimension at most $n-2$, contradicting the assumption $L_0\cup L_1\subseteq M\cap N$.

    2. Suppose there are distinct $M_0,N_0\in \Sigma_{L_0}$ and distinct $M_1,N_1\in \Sigma_{L_1}$ such that $\{M_0\setminus L_0, N_0\setminus L_0\}=\{M_1\setminus L_1, N_1\setminus L_1\}$. Without loss of generality, assume $M_0\setminus L_0=M_1\setminus L_1$ and $N_0\setminus L_0=N_1\setminus L_1$.

    Note that $M_0\setminus L_0$ is a set of collinear points, and $|M_0\setminus L_0|=q^{n-1}$ is larger than a projective space of dimension $n-2$. Therefore, $\left[M_0\setminus L_0\right]$ is the unique maximal singular subspace containing $M_0\setminus L_0$ by Lemma \ref{lem:polarProperties} \ref{lem:polarProperties-collinearPoints}. Also, $M_0$ and $M_1$ are maximal singular subspaces containing $M_0\setminus L_0$, so $M_0=M_1$. Similarly, we can see that $N_0=N_1$. But this means $M_0,N_0\in \Sigma_{L_0}\cap\Sigma_{L_1}$, contradicting part \ref{lem:SigmaDelta-Sigma}.

    3. From the proof of part \ref{lem:SigmaDelta-Delta}, we see that any elemnt of $\Delta_{L_0}$ is defined by a unique pair of elements of $\Sigma_{L_0}$. The result follows by noting that $|\Sigma_{L_0}|=t+1$ by definition. 
\end{proof}

Now that we know that the sets $\Delta_L$ are disjoint, we can use well-known results involving the counting of singular subspaces of polar spaces to count how many optimal $\theta_1$-eigenfunctions a polar graph has. Here the classical parameter $e$ from Table \ref{tab:polarInfo} appears.

\begin{corollary}
    Let $\Pi=(\mathcal{P},\Sigma)$ be an embedded polar space of rank $n\geqslant 2$ found in Table \ref{tab:polarInfo}, with order $(q,t)$ and parameter $e$. Then there are exactly 
    \begin{equation*}
        \binom{t+1}{2}\left(\frac{q^n-1}{q-1}\right)\prod_{i=0}^{n-2}\left(q^{n+e-i-1}+1\right)
    \end{equation*}
    pairs $\{T_0,T_1\}$ of isolated cliques of size $\theta_1+1$
\end{corollary}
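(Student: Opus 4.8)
The plan is to combine the characterisation from Proposition~\ref{prop:polarCharacterisation} with Lemma~\ref{lem:SigmaDelta}: the pairs of isolated cliques of size $\theta_1+1$ are precisely the disjoint union $\bigsqcup_L \Delta_L$ over all singular subspaces $L$ of dimension $n-2$, and each $\Delta_L$ has size $\binom{t+1}{2}$. Hence the total count is $\binom{t+1}{2}$ times the number of $(n-2)$-dimensional singular subspaces of $\Pi$. So the corollary reduces to a single known enumeration: counting the $(n-2)$-dimensional singular subspaces of an embedded polar space of rank $n$ with order $(q,t)$ and parameter $e$.

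First I would recall the standard counting formula. The number of singular subspaces of dimension $k$ in a finite classical polar space of rank $n$ with parameter $e$ (so that $t = q^e$) is
\begin{equation*}
    \gbinom{n}{k+1}_q \prod_{i=0}^{k}\left(q^{n+e-i-1}+1\right),
\end{equation*}
where $\gbinom{n}{k+1}_q = \prod_{i=0}^{k}\frac{q^{n-i}-1}{q^{i+1}-1}$ is the Gaussian binomial coefficient counting $(k+1)$-dimensional subspaces of $\mathbb{F}_q^n$; this is, for instance, \cite[Lemma 2.2.7]{BV22} or classical. Here I would cite the appropriate reference in the paper's bibliography rather than reprove it.

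Next I would specialise to $k = n-2$. The Gaussian binomial becomes $\gbinom{n}{n-1}_q = \gbinom{n}{1}_q = \frac{q^n-1}{q-1}$, and the product runs $\prod_{i=0}^{n-2}\left(q^{n+e-i-1}+1\right)$, exactly the two factors appearing in the statement. Multiplying by $|\Delta_L| = \binom{t+1}{2}$ from Lemma~\ref{lem:SigmaDelta}\,\ref{lem:SigmaDelta-Size} gives the claimed count. The only point requiring a word of care is that the map $L \mapsto \Delta_L$ is injective with disjoint images and that every pair of isolated cliques of size $\theta_1+1$ lies in some $\Delta_L$ — but this is exactly the content of Lemma~\ref{lem:SigmaDelta}\,\ref{lem:SigmaDelta-Delta} and Proposition~\ref{prop:polarCharacterisation}, so no new argument is needed.

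I expect the main (indeed only) obstacle to be bookkeeping: pinning down the precise statement of the subspace-counting formula in the literature with the normalisation of $e$ matching Table~\ref{tab:polarInfo} (where $t = q^e$, allowing half-integer $e$ for the unitary cases), and checking that the empty-rank or low-rank edge cases ($n=2$) are covered — there $k = n-2 = 0$, $\gbinom{2}{1}_q = q+1$, the product is the single factor $q^{e+1}+1$, and one should verify this agrees with the known point count. Once the formula is quoted correctly, the proof is a two-line substitution.
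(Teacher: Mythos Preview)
Your proposal is correct and follows essentially the same approach as the paper's proof: use Proposition~\ref{prop:polarCharacterisation} and Lemma~\ref{lem:SigmaDelta} to reduce to counting $(n-2)$-dimensional singular subspaces, then quote a standard formula. The only cosmetic difference is the reference for that count --- the paper cites \cite[Lemma 9.4.1]{BCN89} rather than \cite{BV22} --- and the paper does not spell out the Gaussian-binomial simplification explicitly.
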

\begin{proof}
    Let $N$ be the number of such pairs of isolated cliques. By Proposition \ref{prop:polarCharacterisation}, these pairs are exactly the elements of $\Delta_L$ for some singular subspace of $\Pi$ with $\dim(L)=n-2$. By Lemma \ref{lem:SigmaDelta} parts \ref{lem:SigmaDelta-Delta} and \ref{lem:SigmaDelta-Size}, we have $\delta=|\Delta_L|=t(t+1)/2$ for all such $L$ and $N/\delta$ is the number of singular subspaces of $\Pi$ of dimension $n-2$. The result follows by \cite[Lemma 9.4.1]{BCN89}.
\end{proof}

\subsection{The affine polar graphs}\label{ssec:theta1_affine}

In this section we characterise and count the optimal $\theta_1$-eigenfunctions for the affine polar graphs. To do this, we introduce notation which will help to transfer our knowledge of the associated polar spaces to the ambient vector space.

Let $V$ be a vector space over $\mathbb{F}_q$ of dimension $2m$, $m\geqslant 1$, and $Q$ be the nondegenerate quadratic form on $V$ of type $\epsilon\in \{+1,-1\}$. The \emph{quadric} $\Quadric{V}$ is the set $\Quadric{V}=\{v\in V:Q(v)=0\}$. The affine polar graphs $\VO[\epsilon]{2m}{q}$ are closely related to the orthogonal polar graphs $\Or[\epsilon]{2m}{q}$. For any subset of points $P$ of $\Or[\epsilon]{2m}{q}$, we define 
\begin{align*}
    &\Aff{P}  = \{v\in V:v\in p\text{ for some }p\in P\},\text{ and}\\
    &\Aff[*]{P}  = \Aff{P}\setminus\{0\}.
\end{align*} 
Note that $\Aff{P}\subseteq \Quadric{V}$ for any set of points $P$, and $\Aff{P}$ consists of the elements of a vector space if and only if $P$ is the points of a projective space. 

For any subset $U\subseteq V$, we define
\begin{equation*}
    \Proj{U}=\{\lrangle{v}:v\in U,v\neq 0\}.
\end{equation*} 
Note that $\Proj{U}$ is a set of points of $\Or[\epsilon]{2m}{q}$ if and only if $U\subseteq \Quadric{V}$.

For a singular subspace $L$ of $\Or[\epsilon]{2m}{q}$, we define
\begin{equation*}
    V\Delta_L = \left\{\{\Aff[*]{M\setminus L},\Aff[*]{N\setminus L}\}:M,N\in \Sigma_L,M\neq N\right\}.
\end{equation*}

Now we start to investigate the structure of the affine polar graphs and their cliques.

\begin{lemma}\label{lem:polarToAffineCliques}
     Let $V$ be a vector space over $\mathbb{F}_q$ of dimension $2m$, $m\geqslant 1$, and $Q$ be the nondegenerate quadratic form on $V$ of type $\epsilon\in \{+1,-1\}$. We have:
     \begin{enumerate}
        \item\label{lem:polarToAffineCliques-shift} for all $u,v,w\in V$, $u$ and $w$ are adjacent in $\VO[\epsilon]{2m}{q}$ if and only if $v+u$ is adjacent to $v+w$ (i.e. the function $\phi_v:V\to V,\phi_v(u)=v+u$ is an automorphism of $\VO[\epsilon]{2m}{q}$).
        \item\label{lem:polarToAffineCliques-spaces} For a singular subspace $L$ of $\Or[\epsilon]{2m}{q}$, $v+\Aff{L}$ is a clique in $\VO[\epsilon]{2m}{q}$.
     \end{enumerate}
\end{lemma}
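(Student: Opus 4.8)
The plan is to prove both parts of Lemma~\ref{lem:polarToAffineCliques} by direct computation with the quadratic form $Q$ and its polarisation $B$, using that adjacency in $\VO[\epsilon]{2m}{q}$ is governed entirely by differences of vectors.

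For part~\ref{lem:polarToAffineCliques-shift}, I would observe that $u$ and $w$ are adjacent if and only if $u\neq w$ and $Q(u-w)=0$. The key point is that $(v+u)-(v+w)=u-w$, so the condition $Q((v+u)-(v+w))=0$ is literally the same equation, and $v+u\neq v+w$ iff $u\neq w$. Hence $\phi_v$ preserves both adjacency and non-adjacency, and since it is clearly a bijection on $V$ with inverse $\phi_{-v}$, it is an automorphism. This step is essentially immediate from the definition and requires no obstacle to overcome.

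For part~\ref{lem:polarToAffineCliques-spaces}, I would first reduce to the case $v=0$ using part~\ref{lem:polarToAffineCliques-shift}: since $\phi_v$ is an automorphism, $v+\Aff{L}=\phi_v(\Aff{L})$ is a clique iff $\Aff{L}$ is. Then I would show $\Aff{L}$ is a clique. Recall $L$ is a singular subspace of $\Or[\epsilon]{2m}{q}$, meaning the corresponding subspace $W\subseteq V$ satisfies $Q(w)=0$ for all $w\in W$, and $\Aff{L}=W$ is exactly the set of vectors of this subspace. For any two distinct vectors $x,y\in W$, their difference $x-y$ lies in $W$, so $Q(x-y)=0$, which means $x$ and $y$ are adjacent in $\VO[\epsilon]{2m}{q}$. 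Therefore every pair of distinct elements of $\Aff{L}$ is adjacent, i.e. $\Aff{L}$ is a clique, and so is $v+\Aff{L}$.

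The main (minor) subtlety to be careful about is the identification between the ``points'' language of the polar space $\Or[\epsilon]{2m}{q}$ and the ``vector'' language of $\VO[\epsilon]{2m}{q}$: a singular subspace $L$ is, by the definition given in the excerpt, a subspace $W\subseteq V$ on which $Q$ vanishes identically, and $\Aff{L}$ recovers this subspace (including $0$) as a set of vectors. Once that translation is fixed, closure of $W$ under subtraction does all the work, and there is no real obstacle — this lemma is a bookkeeping step setting up the later characterisation of optimal $\theta_1$-eigenfunctions via the sets $V\Delta_L$.
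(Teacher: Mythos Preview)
Your proposal is correct and follows essentially the same route as the paper. Part~\ref{lem:polarToAffineCliques-shift} is identical; for part~\ref{lem:polarToAffineCliques-spaces} the paper also reduces to $v=0$ and then, rather than invoking the subspace property of $\Aff{L}$ directly as you do, it treats the case $u=0$ separately and for nonzero $u,w$ argues via collinearity that $Q$ vanishes on $\lrangle{u,w}$ --- your observation that $\Aff{L}$ is itself the underlying totally singular subspace (so $x-y\in\Aff{L}$ gives $Q(x-y)=0$ immediately) is a slightly cleaner packaging of the same content.
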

\begin{proof}
    1. Vertices $u,w$ are adjacent if and only if $Q(u-w)=0$. Then we have $Q((u+v)-(w+v))=Q(u-w)=0$.

    2. We may assume $v=0$ by part \ref{lem:polarToAffineCliques-shift}. Let $u,w\in \Aff{L}$ be distinct. If $u=0$, as $\Aff{L}\subseteq \Quadric{V}$ we have $Q(w-u)=Q(w)=0$. 
    
    Now assume $u,w\in \Aff{L}$ are distinct and nonzero. Then $p=\lrangle{u},r=\lrangle{w}$ are points in $L$, and are collinear in $\Or[\epsilon]{2m}{q}$ as $L$ is a singular subspace. This means $Q$ is identically zero on the vector space $\lrangle{u,w}$. In particular, $Q(u-w)=0$.
\end{proof}

The next Lemma gives a characterisation of maximal cliques in the affine polar graphs, which will be useful later.
    
\begin{lemma}\label{lem:affineCliques}
     Let $V$ be a vector space over $\mathbb{F}_q$ of dimension $2m$, $m\geqslant 1$, and $Q$ be the nondegenerate quadratic form on $V$ of type $\epsilon\in \{+1,-1\}$. For distinct cliques $C$ and $D$ in $\VO[\epsilon]{2m}{q}$, we have;
     \begin{enumerate} 
        \item\label{lem:affineCliques-shift} $v+C$ is a clique for all $v\in V$;
        \item\label{lem:affineCliques-contained} for all $v\in C$, there is a (maximal) singular subspace $L$ of $\Or[\epsilon]{2m}{q}$ such that $C\subseteq v+\Aff{L}$;
        \item\label{lem:affineCliques-maximal} if $C$ is maximal, there exists a unique maximal singular subspace $L$ of $\Or[\epsilon]{2m}{q}$ such that $C=v+\Aff{L}$ for all $v\in C$;
        \item\label{lem:affineCliques-intersecting} if $C$ and $D$ are maximal cliques, either $C\cap D=\emptyset$ or there are maximal singular subspaces $M,N$ of $\Or[\epsilon]{2m}{q}$ such that $C=v+\Aff{M},D=v+\Aff{N}$ and $C\cap D= v+\Aff{M\cap N}$ for all $v\in C\cap D$.
     \end{enumerate}
\end{lemma}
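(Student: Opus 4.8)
The plan is to reduce everything to the polar-space picture via the translation automorphisms $\phi_v$ from Lemma \ref{lem:polarToAffineCliques}. Part \ref{lem:affineCliques-shift} is immediate from Lemma \ref{lem:polarToAffineCliques}\ref{lem:polarToAffineCliques-shift}, since $\phi_v$ is a graph automorphism and hence sends cliques to cliques. For part \ref{lem:affineCliques-contained}, fix $v\in C$ and pass to the clique $C' = (-v)+C = \phi_{-v}(C)$, which contains $0$ and is still a clique by part \ref{lem:affineCliques-shift}. Every $w\in C'$ satisfies $Q(w) = Q(w-0) = 0$, so $C'\subseteq \Quadric{V}$, and for any two nonzero $u,w\in C'$ we have $Q(u-w)=0$ together with $Q(u)=Q(w)=0$, which forces $B(u,w)=0$; hence $\lrangle{u}$ and $\lrangle{w}$ are collinear points of $\Or[\epsilon]{2m}{q}$. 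Thus $\Proj{C'\setminus\{0\}}$ is a set of pairwise collinear points, and by Lemma \ref{lem:polarProperties}\ref{lem:polarProperties-collinearPoints} it lies in a maximal singular subspace $L$; then $C'\subseteq \Aff{L}$, so $C\subseteq v+\Aff{L}$, and $L$ can be taken maximal.

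For part \ref{lem:affineCliques-maximal}, suppose $C$ is a maximal clique. Pick any $v\in C$; by part \ref{lem:affineCliques-contained} there is a maximal singular subspace $L$ with $C\subseteq v+\Aff{L}$, and by Lemma \ref{lem:polarToAffineCliques}\ref{lem:polarToAffineCliques-spaces} the set $v+\Aff{L}$ is itself a clique, so maximality forces $C = v+\Aff{L}$. For uniqueness: if $C = v+\Aff{L} = v+\Aff{L'}$ for maximal singular subspaces $L,L'$, then $\Aff{L}=\Aff{L'}$, so $L=L'$ as sets of points, and since $|C| = |\Aff{L}| = q^n > (q^{n-1}-1)/(q-1)$ exceeds the number of points in any subspace of dimension $n-2$, the subspace $L$ is in fact the \emph{unique} maximal singular subspace through its point set by Lemma \ref{lem:polarProperties}\ref{lem:polarProperties-collinearPoints}. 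Finally, the statement "$C = v+\Aff{L}$ for all $v\in C$" holds because $\Aff{L}$ is a subspace containing $0$ and closed under translation by its own elements: if $v,v'\in C = w+\Aff{L}$ then $v-v'\in \Aff{L}$, so $v+\Aff{L} = v'+\Aff{L}$.

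For part \ref{lem:affineCliques-intersecting}, assume $C\cap D\neq\emptyset$ and fix $v\in C\cap D$. By part \ref{lem:affineCliques-maximal} there are unique maximal singular subspaces $M,N$ with $C = v+\Aff{M}$ and $D = v+\Aff{N}$ (using that $v$ lies in both cliques). Then $C\cap D = v+(\Aff{M}\cap\Aff{N}) = v+\Aff{M\cap N}$, the last equality because $\Proj{\cdot}$ and $\Aff{\cdot}$ are inclusion-reversing-free correspondences compatible with intersection: $u\in \Aff{M}\cap\Aff{N}$ with $u\neq 0$ means $\lrangle{u}\in M\cap N$, and $0\in\Aff{M\cap N}$ always. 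By part \ref{lem:affineCliques-maximal} again the representative $v$ can be replaced by any other point of $C\cap D$, giving the "for all $v\in C\cap D$" clause. I expect the only mild subtlety to be the passage $Q(u-w)=Q(u)=Q(w)=0 \Rightarrow B(u,w)=0$ and the bookkeeping that $\Aff{M\cap N} = \Aff{M}\cap\Aff{N}$; both are routine once one unwinds the definitions of $B$, $\Aff{\cdot}$ and $\Proj{\cdot}$, and neither requires the classification of polar spaces — only the general polar-space properties already recorded in Lemma \ref{lem:polarProperties}.
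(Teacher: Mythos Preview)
Your proof is correct and follows essentially the same route as the paper: translate to place $0$ in the clique, use $Q(u)=Q(w)=Q(u-w)=0\Rightarrow B(u,w)=0$ to get pairwise collinear points, invoke Lemma~\ref{lem:polarProperties}\ref{lem:polarProperties-collinearPoints} to land in a maximal singular subspace, and then unwind. Two minor remarks: in part~\ref{lem:affineCliques-maximal} the sentence invoking the size bound $|C|=q^n>(q^{n-1}-1)/(q-1)$ is redundant (you already have uniqueness from $\Aff{L}=\Aff{L'}\Rightarrow L=L'$, and the ``for all $v\in C$'' clause is handled cleanly by your coset argument in the next sentence) and uses $n$, which is not the rank parameter in the affine setting---drop it; and in part~\ref{lem:affineCliques-contained} you may want to spell out, as the paper does, that $B(u,w)=0$ together with $Q(u)=Q(w)=0$ makes $Q$ vanish on all of $\langle u,w\rangle$, since that is what ``collinear in $\Or[\epsilon]{2m}{q}$'' actually requires.
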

\begin{proof}
    1. This follows immediately from Lemma \ref{lem:polarToAffineCliques} \ref{lem:polarToAffineCliques-shift}.
    
    2. Let $B$ be the polarisation of $Q$. By definition of adjacency in $\VO[\epsilon]{2m}{q}$, $v+C$ is a clique for all vectors $v\in V$. Let $v\in C$, and consider the clique $D=-v+C$. Then $0\in D$, so for all non-zero vectors $u,w\in D$ we must have $0=Q(u)=Q(w)=Q(w-u)$, which implies $B(u,w)=0$. But then for all $\alpha,\beta\in \mathbb{F}_q$, we have $Q(\alpha u+ \mu w)=B(\alpha u,\beta w) + Q(\alpha u) + Q(\beta w) = \alpha\beta B(u,w) + \alpha^2Q(u) + \beta^2Q(w)=0$. 

    We have proven that for all $u,w\in D$, $\lrangle{u,w}\subseteq \Quadric{V}$. Therefore, $\Proj{D}$ is a set of pairwise collinear points of $\Or[\epsilon]{2m}{q}$, which is contained in some (maximal) singular subspace $L$ of $\Or[\epsilon]{2m}{q}$ by Lemma \ref{lem:polarProperties} \ref{lem:polarProperties-collinearPoints}. By observing $D\subseteq \Aff{\Proj{D}}\subseteq\Aff{L}$, we see that $C\subseteq v+\Aff{L}$.

    3. By Lemma \ref{lem:polarToAffineCliques} and parts \ref{lem:affineCliques-shift} and \ref{lem:affineCliques-contained}, it follows that for any $v\in C$, $C=v+\Aff{L}$, where $L$ is a maximal singular subspaces. Suppose we have $u,w\in C$ and maximal singular subspaces $L_u,L_w$ such that $C=u+\Aff{L_u}=w+\Aff{L_w}$. As $\Aff{L_u}$ and $\Aff{L_w}$ are the elements of vector spaces of equal dimension, this forces $\Aff{L_u}=\Aff{L_w}$ and $L_u=L_w$.

    4. Suppose $v\in C\cap D$. By part \ref{lem:affineCliques-maximal} there exist unique maximal singular subspaces $M,N$ of $\Or[\epsilon]{2m}{q}$ such that $C=v+\Aff{M},D=v+\Aff{N}$. The result follows after observing $(-v+C)\cap (-v+D) = -v + (C\cap D)$ and $\Aff{M}\cap \Aff{N}=\Aff{M\cap N}$.
\end{proof}

~\\

\subsubsection{The $\theta_1$ eigenfunctions of the hyperbolic affine polar graphs}\label{sssec:theta1_hyperbolic}

In this section we consider the affine polar graphs corresponding to a quadratic form of type $+1$ (the hyperbolic case). For these graphs, we will show that the optimal $\theta_1$-eigenfunctions come from translations of the isolated cliques we have seen in Section \ref{ssec:theta1_polar}.

We begin by showing that any translation of the isolated cliques we have studied in the polar graph $\Gamma(\Or[+]{2m}{q})$ define an optimal $\theta_1$-eigenfunction in $\VO[+]{2m}{q}$.

\begin{lemma}\label{lem:hyperbolicConstruction}
    Let $V$ be a vector space over $\mathbb{F}_q$ of dimension $2m$, $m\geqslant 1$, and $Q$ be the nondegenerate quadratic form on $V$ of type $+1$. Further, let $v\in V$, $L$ be a singular subspace of $\Or[+]{2m}{q}$ of dimension $(m-2)$, and $M,N\in \Sigma_L$ be distinct. Then in the graph $\VO[+]{2m}{q}$:
    \begin{enumerate}
        \item\label{lem:hyperbolicConstruction-adj} for all distinct $x,y\in v + \Aff[*]{M\setminus L}$, $x$ and $y$ are adjacent;
        \item\label{lem:hyperbolicConstruction-notadj} for all $x\in v + \Aff[*]{M\setminus L},y\in v + \Aff[*]{N\setminus L}$, $x$ and $y$ are not adjacent;
        \item\label{lem:hyperbolicConstruction-func} the function $f:\mathcal{P}\to \mathbb{R}$, such that
        \begin{equation*}
            f(z) = 
                \left\{
                  \begin{array}{cc}
                    1, & z\in v + \Aff[*]{M \setminus L}; \\
                    -1, & z\in v + \Aff[*]{N \setminus L}; \\
                    0, & \text{otherwise}.
                  \end{array}
                \right.
        \end{equation*}
        satisfies condition  \eqref{LocalCondition} for $\theta_1=q^m-q^{m-1}-1$.
    \end{enumerate}
\end{lemma}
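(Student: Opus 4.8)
The plan is to reduce everything to the polar-space picture already established in Lemma~\ref{lem:notcollinear} and Lemma~\ref{lem:polarToAffineCliques}, together with a counting argument for vertices outside the two cliques. First, by Lemma~\ref{lem:polarToAffineCliques}~\ref{lem:polarToAffineCliques-shift} the translation $\phi_v$ is a graph automorphism, so it suffices to prove parts \ref{lem:hyperbolicConstruction-adj}--\ref{lem:hyperbolicConstruction-func} in the case $v=0$; I would state this reduction at the outset. For part \ref{lem:hyperbolicConstruction-adj}: the points of $M\setminus L$ are pairwise collinear in $\Or[+]{2m}{q}$ (Lemma~\ref{lem:notcollinear}~\ref{lem:notcollinear-collinear}), hence $Q$ vanishes on each $2$-space they span, which gives $Q(x-y)=0$ for distinct $x,y\in \Aff[*]{M\setminus L}$; this is essentially the argument in the proof of Lemma~\ref{lem:polarToAffineCliques}~\ref{lem:polarToAffineCliques-spaces}. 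For part \ref{lem:hyperbolicConstruction-notadj}: if $x\in \Aff[*]{M\setminus L}$ and $y\in \Aff[*]{N\setminus L}$ were adjacent, then $Q(x-y)=0$, so $B(x,y)=Q(x-y)-Q(x)-Q(y)=0$ and the $2$-space $\lrangle{x,y}$ is totally singular, meaning $\lrangle{x}$ and $\lrangle{y}$ are collinear points in $\Or[+]{2m}{q}$ lying respectively in $M\setminus L$ and $N\setminus L$ --- contradicting Lemma~\ref{lem:notcollinear}~\ref{lem:notcollinear-notcollinear}.

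For part \ref{lem:hyperbolicConstruction-func} I must verify condition~\eqref{LocalCondition} vertex by vertex for $\theta_1=q^m-q^{m-1}-1$. The key size computation is $|\Aff[*]{M\setminus L}| = q\cdot q^{m-1} - q\cdot \frac{q^{m-1}-1}{q-1}\cdot\tfrac{?}{}$ --- more carefully, $M$ has dimension $m-1$ as a projective space so $\Aff{M}$ is a vector space of dimension $m$ with $q^m$ vectors, $\Aff{L}$ has $q^{m-1}$ vectors, hence $|\Aff[*]{M\setminus L}| = q^m - q^{m-1}$; but I want this to equal $\theta_1+1 = q^m-q^{m-1}$, which checks out. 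By parts \ref{lem:hyperbolicConstruction-adj} and \ref{lem:hyperbolicConstruction-notadj} the set $T_0 = \Aff[*]{M\setminus L}$ is a clique of size $\theta_1+1$ disjoint and non-adjacent to the clique $T_1 = \Aff[*]{N\setminus L}$, so for $z\in T_0$ the sum in \eqref{LocalCondition} is $|T_0|-1 = \theta_1 = \theta_1 f(z)$, and symmetrically for $z\in T_1$; the case $z=0$ (if $0\notin T_0\cup T_1$, which holds since $0\notin \Aff[*]{\cdot}$) needs $z$ to have equally many neighbours in $T_0$ and $T_1$.

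The main obstacle, and the part deserving the most care, is the case $z\notin T_0\cup T_1$: I need $z$ to have the same number of neighbours in $T_0$ as in $T_1$. The natural approach is to extend $T_0,T_1$ to full maximal affine cliques $C_0 = \Aff{M}$ and $C_1 = \Aff{N}$ (using Lemma~\ref{lem:polarToAffineCliques}~\ref{lem:polarToAffineCliques-spaces}), which by Lemma~\ref{lem:affine_parameters} have order meeting the Delsarte--Hoffman bound and so are regular cliques in $\VO[+]{2m}{q}$; thus any vertex $z\notin C_i$ has a constant number of neighbours in $C_i$, the nexus. Since $C_0\setminus T_0 = \Aff{L} = C_1\setminus T_1$, the neighbour counts of $z$ in $T_0$ and in $T_1$ differ only by the neighbour counts in the common set $\Aff{L}$ (possibly adjusted if $z\in \Aff{L}$ itself or if $z\in C_0\cup C_1$), which cancel. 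I would organise this into cases: $z\in \Aff{L}$ (then $z$ is adjacent to all of $T_0$ and all of $T_1$ since $\Aff{M}$ and $\Aff{N}$ are cliques, giving sum $(\theta_1+1)-(\theta_1+1)=0=\theta_1 f(z)$); $z\in C_0\setminus C_1$ or $z\in C_1\setminus C_0$ (handle via regularity of the other clique plus the clique structure); and $z\notin C_0\cup C_1$ (subtract the two nexus counts, using that $z$'s neighbours in $\Aff{L}$ form a common subset). The bookkeeping is where a careless sign or off-by-one could creep in, so I would track $|\Aff{L}|=q^{m-1}$ and the nexus $q^{m-1}(q^{m-1}+1)/(q^{m-1}+1)=q^{m-1}$ explicitly throughout; note these coincide, which is exactly what makes the construction hyperbolic-specific, and I would flag that coincidence as the reason the argument fails in the elliptic case.
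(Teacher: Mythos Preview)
Your proposal is correct and follows essentially the same approach as the paper: reduce to $v=0$ via the translation automorphism, derive parts~\ref{lem:hyperbolicConstruction-adj} and~\ref{lem:hyperbolicConstruction-notadj} from Lemma~\ref{lem:notcollinear}, and for part~\ref{lem:hyperbolicConstruction-func} use that $\Aff{M}$ and $\Aff{N}$ are Delsarte cliques with common nexus and common intersection $\Aff{L}$, so the neighbour counts in $T_0$ and $T_1$ cancel for any $z$ outside $\Aff{M}\cup\Aff{N}$. One small redundancy: your case ``$z\in C_0\setminus C_1$ or $z\in C_1\setminus C_0$'' is exactly $z\in T_0\cup T_1$ (since $C_0\cap C_1=\Aff{L}$), which you have already handled, so this case can be dropped.
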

\begin{proof} 
    As being a adjacent, nonadjacent, and satisfying condition  \eqref{LocalCondition} is invariant under the action of an automorphism, we can assume $v=0$. Parts \ref{lem:hyperbolicConstruction-adj} and \ref{lem:hyperbolicConstruction-notadj} follow from Lemma \ref{lem:notcollinear} \ref{lem:notcollinear-collinear} and \ref{lem:notcollinear-notcollinear} respectively.

    3. Note that $\Aff{L}\cup \Aff[*]{M\setminus L}\cup \Aff[*]{N\setminus L} = \Aff{M}\cup \Aff{N} = \Aff{M\cup N}$. Therefore $|\Aff[*]{M\setminus L}|=|\Aff[*]{N\setminus L}|=q^m-q^{m-1}=\theta_1+1$. For $z\in \Aff{L}$ and $z\in \Aff[*]{M\setminus L}\cup \Aff[*]{N\setminus L}$ can be verified using parts \ref{lem:hyperbolicConstruction-adj} and \ref{lem:hyperbolicConstruction-notadj}.
        
    Let $z\notin \Aff{M\cup N}$ and $z$ have exactly the neighbours $M_z$ in $\Aff[*]{M\setminus L}$, $N_z$ in $\Aff[*]{M\setminus L}$ and $L_z$ in $\Aff{L}$. By Lemmas \ref{HoffmanBound} and \ref{lem:affine_parameters}, $\Aff{M}$ and $\Aff{N}$ are cliques with nexus $q^{m-1}$. Using the fact that $\Aff{M}\cap \Aff{N}=\Aff{L}$ and the above, we have $q^{m-1}=|M_z|+|L_z|=|N_z|+|L_z|$, showing that $|M_z|=|N_z|$. This shows that $f$ satisfies condition \eqref{LocalCondition} for $\theta_1$.
\end{proof}

Now we characterise the isolated cliques of the sizes we are interested in, which uses the characterisation in Proposition \ref{prop:polarCharacterisation} for polar graphs.

\begin{proposition}\label{prop:affineCharacterisation}
    Let $V$ be a vector space over $\mathbb{F}_q$  of dimension $2m$, $m\geqslant 1$, and $Q$ be the nondegenerate quadratic form on $V$ of type $+1$. For isolated cliques $T_0,T_1$ of size $\theta_1+1$ in $\VO[+]{2m}{q}$, we have cliques $C_0,C_1$ and maximal singular subspaces $M_0,M_1$ of $\Or[+]{2m}{q}$ such that;
    \begin{enumerate}
        \item $C_i$ are maximal cliques,  $C_i=v_i+\Aff{M_i}$ for all $v_i\in C_i$;
        \item $|C_0\cap C_1|=q^{m-1}$ and there exists a singular subspaces $L$ of $\Or[+]{2m}{q}$ of dimension $(n-2)$ such that $\{T_0,T_1\}\in v + V\Delta_L$ for all $v\in C_0\cap C_1$.
    \end{enumerate}
\end{proposition}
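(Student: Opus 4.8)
The plan is to transport the polar‑space argument of Proposition~\ref{prop:polarCharacterisation} to the affine graph through the dictionary of Lemma~\ref{lem:affineCliques}, pinning down the intersection $C_0\cap C_1$ via the fact that the maximum cliques of $\VO[+]{2m}{q}$ are regular. First I would extend $T_1$ to a maximal clique $C_1$ of $\VO[+]{2m}{q}$. Since $\Or[+]{2m}{q}$ has rank $m$, its maximal singular subspaces have projective dimension $m-1$ (Lemma~\ref{lem:polarProperties}\ref{lem:polarProperties-maximalDimension}), hence vector dimension $m$, so Lemma~\ref{lem:affineCliques}\ref{lem:affineCliques-maximal} gives a unique maximal singular subspace $M_1$ with $C_1=v_1+\Aff{M_1}$ for all $v_1\in C_1$, and $|C_1|=q^m$. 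By Lemma~\ref{lem:affine_parameters} this meets the Delsarte--Hoffman bound, so $C_1$ is a regular clique with nexus $q^{m-1}$, and $q^m$ is the clique number of $\VO[+]{2m}{q}$. As $|T_1|=\theta_1+1=q^m-q^{m-1}$, we obtain $|C_1\setminus T_1|=q^{m-1}$, which is precisely the nexus of $C_1$.

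Next I would build the second clique. The key point is that $T_0\cap C_1=\emptyset$: a vertex of $T_0$ lying in the clique $C_1$ would be adjacent to every vertex of $T_1\subseteq C_1$ (all distinct from it, since $T_0\cap T_1=\emptyset$), contradicting the absence of edges between $T_0$ and $T_1$. Hence each $x\in T_0$ lies outside $C_1$ and, by regularity, has exactly $q^{m-1}$ neighbours in $C_1$; by isolation none of them lies in $T_1$, so they all lie in $C_1\setminus T_1$, a set of exactly $q^{m-1}$ vertices. Thus each $x\in T_0$ is adjacent to all of $C_1\setminus T_1$, and since $T_0$ and $C_1\setminus T_1$ are disjoint cliques, $C_0:=T_0\cup(C_1\setminus T_1)$ is a clique with $|C_0|=q^m$; being of maximum size, $C_0$ is a maximal clique. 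This is the step I expect to be most delicate, and it rests on an arithmetic coincidence special to the hyperbolic type: the clique number $q^m$, the value $|T_1|=q^m-q^{m-1}$, and the nexus $q^{m-1}$ fit together so that the $q^{m-1}$ neighbours of $x$ inside $C_1$ are forced to be exactly $C_1\setminus T_1$.

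Finally I would read off the structure. By construction $\emptyset\neq T_0\subseteq C_0$ while $T_0\cap C_1=\emptyset$, so $C_0\neq C_1$, and $C_0\cap C_1=C_1\setminus T_1\neq\emptyset$. Lemma~\ref{lem:affineCliques}\ref{lem:affineCliques-intersecting} then supplies maximal singular subspaces $M_0,M_1$ of $\Or[+]{2m}{q}$ with $C_0=v+\Aff{M_0}$, $C_1=v+\Aff{M_1}$ and $C_0\cap C_1=v+\Aff{M_0\cap M_1}$ for every $v\in C_0\cap C_1$; with the previous paragraph this proves item~(1) and the equality $|C_0\cap C_1|=q^{m-1}$. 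Setting $L:=M_0\cap M_1$, the equality $|\Aff{L}|=q^{m-1}$ forces $L$ to have $(q^{m-1}-1)/(q-1)$ points, i.e.\ $\dim(L)=m-2$ (the stated value $n-2$, $n=m$ being the rank of $\Or[+]{2m}{q}$), and since $M_0\neq M_1$ are maximal they both properly contain $L$, so $M_0,M_1\in\Sigma_L$. Because $C_1=T_1\cup(C_1\setminus T_1)$ and $C_0=T_0\cup(C_1\setminus T_1)$ are disjoint unions, $T_i=C_i\setminus(C_0\cap C_1)$, so for every $v\in C_0\cap C_1$
\[
 T_i=(v+\Aff{M_i})\setminus(v+\Aff{L})=v+\bigl(\Aff{M_i}\setminus\Aff{L}\bigr)=v+\Aff[*]{M_i\setminus L}
\]
(the last equality because $0\in\Aff{L}$ and a nonzero vector lies in $\Aff{M_i}\setminus\Aff{L}$ exactly when it spans a point of $M_i\setminus L$). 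Hence $\{T_0,T_1\}=\{\,v+\Aff[*]{M_0\setminus L},\ v+\Aff[*]{M_1\setminus L}\,\}\in v+V\Delta_L$ for all $v\in C_0\cap C_1$, which is item~(2).
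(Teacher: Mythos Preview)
Your proof is correct, and it is in fact a cleaner route than the paper's. Both arguments begin the same way---extend $T_1$ to a Delsarte clique $C_1=v_1+\Aff{M_1}$ and use the nexus $q^{m-1}$ together with isolation to see that every $x\in T_0$ is adjacent to all of $C_1\setminus T_1$---but then they diverge. You immediately set $C_0:=T_0\cup(C_1\setminus T_1)$, observe that this is a clique of size $q^m$ and hence maximal, and read off $L=M_0\cap M_1$ and $T_i=v+\Aff[*]{M_i\setminus L}$ directly from Lemma~\ref{lem:affineCliques}\ref{lem:affineCliques-intersecting}. The paper instead starts with arbitrary maximal cliques $D_i\supseteq T_i$, makes a case split on whether $D_0\cap D_1=\emptyset$ (the empty case forces $q=2$ and leads, after some work, to essentially your $C_0$), and once an intersecting pair is in hand it \emph{projects} $-v+T_i$ to the polar space and invokes Proposition~\ref{prop:polarCharacterisation} to pin down $M_i$, $L$, and to show $|S_i|=q^{m-1}$ exactly. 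Your approach avoids both the case split and the appeal to the projective characterisation, at the cost of being less modular; the paper's approach recycles the polar result as a black box, which is conceptually tidy but makes the argument longer. Either way the same arithmetic coincidence---that $|T_i|$ plus the nexus equals the clique number---is what drives the conclusion.
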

\begin{proof}
    Let $D_i$ be maximal cliques containing $T_i$. By Lemma \ref{lem:affineCliques} \ref{lem:affineCliques-intersecting}, $|D_i|=q^m$, and by Lemmas \ref{HoffmanBound} and \ref{lem:affine_parameters}, $D_i$ have nexus $q^{m-1}$.
    
    Suppose $D_0\cap D_1=\emptyset$.  For $u\in T_0$, as $T_0,T_1$ are isolated and $|D_1\setminus T_1|=q^{m-1}$, $u$ must be adjacent to all vertices in $D_1\setminus T_1$. This shows that each $w\in D_1\setminus T_1$ is adjacent to every element in $T_0$. Therefore $D_0$ has nexus at least $|T_0|=q^m-q^{m-1}\geqslant q^{m-1}$. Therefore $q=2$ and $(D_0\setminus T_0)\cup T_0,(D_0\setminus T_0)\cup T_1$ are both maximal cliques containing $T_0,T_1$ respectively.

    Therefore we can take maximal cliques $C_i$ which contain $T_i$ and $C_0\cap C_1\neq\emptyset$.    Let $v\in C_0\cap C_1$ and consider $S_0=\Proj{-v+T_0},S_1=\Proj{-v+T_1}$ and $S=\{S_0,S_1\}$. Then $S_0,S_1$ are isolated cliques of $\Gamma(\Or[+]{2m}{q})$, and as $v\notin T_i$, we have $|S_i|\geqslant (q^m-q^{m-1})/(q-1)=q^{m-1}$. By Proposition \ref{prop:polarCharacterisation}, any pair of subsets $R_0\subseteq S_0,R_1\subseteq S_1$ such that $|R_0|=|R_1|=q^{m-1}$ must be contained in unique maximal singular subspaces $M_0,M_1$ such that $L=M_0\cap M_1$ has dimension $m-2$. But this means $M_0,M_1$ is the unique maximal containing $S_0,S_1$ respectively. Then $|S_i|\leqslant |M_i\setminus L|=q^{m-1}$, proving that $|S_i|=q^{m-1}$, $S_i=M_i\setminus L$,  and $S_i$ consists of the nonzero scalar multiples of $q^{m-1}$ independent vectors, i.e. $-v+T_i=\Aff[*]{S_i}$. Therefore, $\{S_0,S_1\}\in \Delta_L$ and $\{T_0,T_1\}\in v+ V\Delta_L$.
\end{proof}
~\\
This finishes the characterisation of optimal $\theta_1$-eigenfunctions in hyperbolic affine polar graphs as those coming from translations of those coming from the hyperbolic polar space. Next we find when the translations of the sets $\Delta_L$ can intersect.

\begin{lemma}\label{lem:hyperbolicDistinct}
    Let $V$ be a vector space over $\mathbb{F}_q$  of dimension $2m$, $m\geqslant 1$, and $Q$ be the nondegenerate quadratic form on $V$ of type $+1$. Further let $u,w\in V$ and $L_0,L_1$ be $(m-2)$-dimensional singular subspaces of $\Or[+]{2m}{q}$. Then $(u+V\Delta_{L_0})\cap (w+V\Delta_{L_1})=\emptyset$ or $(u+V\Delta_{L_0})=(w+V\Delta_{L_1})$, with equality if and only if $L_0=L_1$ and $u-w\in \Aff{L_0}$.
\end{lemma}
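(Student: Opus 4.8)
The plan is to reduce the statement to a question about the affine cliques $C_0, C_1$ that contain the isolated cliques $T_0, T_1$ associated to a pair in $u + V\Delta_{L_0}$, and then invoke the uniqueness statements already established in Lemma~\ref{lem:affineCliques} and Proposition~\ref{prop:affineCharacterisation}. First I would note that a single pair $\{T_0, T_1\}$ lying in $u + V\Delta_{L_0}$ determines, by Proposition~\ref{prop:affineCharacterisation}, the two maximal cliques $C_i = v + \Aff{M_i}$ (where $M_i \in \Sigma_{L_0}$) with $C_0 \cap C_1 = v + \Aff{L_0}$, these being uniquely the maximal cliques of $\VO[+]{2m}{q}$ containing $T_i$ (uniqueness from Lemma~\ref{lem:affineCliques}~\ref{lem:affineCliques-maximal}). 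So if $\{T_0,T_1\} \in (u + V\Delta_{L_0}) \cap (w + V\Delta_{L_1})$, then the pair $\{C_0, C_1\}$ recovered from the $u$-side equals the pair recovered from the $w$-side; matching the intersections gives $u + \Aff{L_0} = w + \Aff{L_1}$ as a coset. Since $\Aff{L_0}$ and $\Aff{L_1}$ are genuine subspaces of $V$ (as $L_0, L_1$ are singular subspaces, hence projective spaces, so $\Aff{\cdot}$ of their point sets is a subspace), equality of cosets forces $\Aff{L_0} = \Aff{L_1}$, hence $L_0 = L_1$, and $u - w \in \Aff{L_0}$.

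Next I would establish the converse and the "all-or-nothing" dichotomy simultaneously. Suppose $L_0 = L_1 =: L$ and $u - w \in \Aff{L}$. For any $M, N \in \Sigma_L$ distinct, I want $\{u + \Aff[*]{M\setminus L}, u + \Aff[*]{N\setminus L}\} = \{w + \Aff[*]{M'\setminus L}, w + \Aff[*]{N'\setminus L}\}$ for a suitable pair $M', N' \in \Sigma_L$; the natural guess is $M' = M$, $N' = N$. Writing $t = u - w \in \Aff{L}$, it suffices to show $u + \Aff[*]{M\setminus L} = w + \Aff[*]{M \setminus L}$, i.e. $t + \Aff[*]{M\setminus L} = \Aff[*]{M\setminus L}$. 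This is where the key computation sits: $\Aff{M}$ is a vector space (the span of the point set of the maximal singular subspace $M$), $t \in \Aff{L} \subseteq \Aff{M}$, and translation by $t$ preserves $\Aff{M}$; I must check it also preserves the subset $\Aff[*]{M\setminus L} = \Aff{M} \setminus \Aff{L}$. For a nonzero $x$ with $\lrangle{x} \in M \setminus L$: $x + t \in \Aff{M}$, and $x + t \notin \Aff{L}$ because otherwise $x \in \Aff{L}$ (as $t \in \Aff{L}$ and $\Aff{L}$ is a subspace), contradicting $\lrangle{x} \notin L$; also $x + t \neq 0$ since $x \notin \Aff{L} \ni -t$. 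Hence $x + t \in \Aff{M} \setminus \Aff{L} = \Aff[*]{M \setminus L}$, and as translation by $t$ is a bijection on $\Aff{M}$ it restricts to a bijection on $\Aff[*]{M\setminus L}$. The same argument handles $N$, so the pair $\{u + \Aff[*]{M\setminus L}, u + \Aff[*]{N\setminus L}\}$ equals $\{w + \Aff[*]{M\setminus L}, w + \Aff[*]{N\setminus L}\}$, showing $u + V\Delta_L \subseteq w + V\Delta_L$; by symmetry the inclusion is an equality.

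Finally, if the hypotheses of the converse fail — either $L_0 \neq L_1$ or $u - w \notin \Aff{L_0}$ — the first paragraph shows no common pair can exist, so the two translated families are disjoint. Combining the three paragraphs gives exactly the stated dichotomy. The main obstacle I anticipate is purely bookkeeping in the first paragraph: one must be careful that the passage from a pair $\{T_0, T_1\}$ back to the coset $v + \Aff{L}$ is well-defined and independent of which representative vector $v \in C_0 \cap C_1$ is used — but this is precisely the content of Proposition~\ref{prop:affineCharacterisation} item~(2) together with the uniqueness in Lemma~\ref{lem:affineCliques}~\ref{lem:affineCliques-maximal}, so no new difficulty arises. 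A minor point to watch is the degenerate case $\dim(L) = m-2$ being small (e.g. when $m = 2$, $L$ is a point and $\Aff{L}$ a line), but the subspace arguments above are dimension-free and go through verbatim.
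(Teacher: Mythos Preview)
Your approach is essentially the paper's: from a common element recover the maximal cliques containing each $T_i$, match them across the $u$- and $w$-descriptions, and read off $L_0=L_1$ and $u-w\in\Aff{L_0}$; your converse paragraph (translation by $t\in\Aff{L}$ preserves each $\Aff[*]{M\setminus L}$) is identical to the paper's.

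Two differences are worth flagging. First, you route through Proposition~\ref{prop:affineCharacterisation} as a black box, whereas the paper works directly from the set equality $\Aff[*]{M_1\setminus L_1}=(u-w)+\Aff[*]{M_0\setminus L_0}$; the direct route is shorter and avoids any worry about what exactly the proposition guarantees. Second --- and this is a genuine slip --- Lemma~\ref{lem:affineCliques}\,\ref{lem:affineCliques-maximal} does not say what you use it for: it asserts that a \emph{given} maximal clique $C$ has a unique representation $C=v+\Aff{M}$, not that a subclique $T_i$ lies in a unique maximal clique. The paper supplies this missing step inline: since $|M_i\setminus L_i|=q^{m-1}$ exceeds the size $(q^{m-1}-1)/(q-1)$ of any $(m-2)$-dimensional singular subspace, the set $M_i\setminus L_i$ spans a unique maximal singular subspace $M_i$, whence $\Aff{M_i}$ is the unique maximal clique of $\VO[+]{2m}{q}$ containing $\Aff[*]{M_i\setminus L_i}$, and the matching $\Aff{M_1}=(u-w)+\Aff{M_0}$ follows. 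You should replace the citation with this span argument.
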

\begin{proof}
    Suppose  $(u+V\Delta_{L_0})\cap (w+V\Delta_{L_1})\neq\emptyset$, so we have $M_i,N_i\in \Sigma_{L_i}$ such that $\Aff[*]{M_1\setminus L_1}=u-w +\Aff[*]{M_0\setminus L_0}$ and $\Aff[*]{N_1\setminus L_1}=u-w +\Aff[*]{N_0\setminus L_0}$. As $|\Aff[*]{M_i\setminus L_i}|=|\Aff[*]{N_i\setminus L_i}|=q^m-q^{m-1}$, we have $|M_i\setminus L_i|=|N_i\setminus L_i|=q^{m-1}$. Therefore, these sets are larger than an $(m-2)$-dimensional singular subspace, and so by Axiom \ref{AxiomII} and Lemma \ref{lem:polarProperties} \ref{lem:polarProperties-collinearPoints}, $M_i\setminus L_i,N_i\setminus L_i$ are contained in a unique maximal singular subspaces $M_i,N_i$ respectively. Then $\Aff{M_i},\Aff{N_i}$ are unique maximal cliques containing $\Aff[*]{M_i\setminus L_i},\Aff[*]{N_i\setminus L_i}$ respectively. This forces $\Aff{M_1}=u-w+\Aff{M_0}$ and $\Aff{N_1}=u-w+\Aff{N_0}$. But $\Aff{M_i},\Aff{N_i}$ are vector spaces, so we must have $\Aff{M_1}=\Aff{M_0},\Aff{N_1}=\Aff{N_0}$, $w-u\in \Aff{M_0}\cap \Aff{N_0} = \Aff{L_0}= \Aff{L_1}$ and $L_1=L_0$.
    
    Now suppose $L_0=L_1$ and $u-w\in \Aff{L_0}$. Then for all $M,N\in \Sigma_{L_0}$, $u-w\in \Aff{L_0}=\Aff{M}\cap \Aff{N}$, so $u-w+\Aff{L_0}=\Aff{L_0}$ and $u-w+\Aff[*]{M\setminus L_0}=\Aff{M\setminus L_0},u-w+\Aff[*]{N\setminus L_0}=\Aff[*]{N\setminus L_0}$. This shows that $u-w+V\Delta_{L_0}= V\Delta_{L_0}$, so we have $u+V\Delta_{L_0}= w + V\Delta_{L_0}$.    
\end{proof}

Now we can count the number of optimal $\theta_1$-eignefunctions of the hyperbolic affine polar graphs.

\begin{corollary}
    Let $V$ be a vector space over $\mathbb{F}_q$  of dimension $2m$, $m\geqslant 1$, and $Q$ be the nondegenerate quadratic form on $V$ of type $+1$. Then there are exactly 
    \begin{equation*}
        q^{m+1}\left(\frac{q^{2m}-1}{q-1}\right)\prod_{i=0}^{m-1}\left(q^{m-i-1}+1\right)
    \end{equation*}
    pairs $\{T_0,T_1\}$ of isolated cliques of size $\theta_1+1$    
\end{corollary}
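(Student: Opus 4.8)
The plan is to mirror the proof of the polar-graph corollary above, now using Proposition~\ref{prop:affineCharacterisation} and Lemma~\ref{lem:hyperbolicDistinct}. First I would note that, by Lemma~\ref{lem:hyperbolicConstruction}, for any $(m-2)$-dimensional singular subspace $L$ of $\Or[+]{2m}{q}$ and any $v\in V$, every element of $v+V\Delta_L$ is a pair $\{T_0,T_1\}$ of isolated cliques of size $\theta_1+1$ in $\VO[+]{2m}{q}$; and conversely, Proposition~\ref{prop:affineCharacterisation} shows that every such pair lies in $v+V\Delta_L$ for some such $L$ and some $v$. Hence the set of pairs to be counted is exactly $\bigcup_L\bigcup_{v\in V}(v+V\Delta_L)$, the outer union running over the $(m-2)$-dimensional singular subspaces $L$ of $\Or[+]{2m}{q}$.

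Next I would turn this union into a disjoint one. By Lemma~\ref{lem:hyperbolicDistinct}, two of the sets $u+V\Delta_{L_0}$ and $w+V\Delta_{L_1}$ are either disjoint or equal, and equal exactly when $L_0=L_1$ and $u-w\in\Aff{L_0}$. So the distinct sets among the $v+V\Delta_L$ are in bijection with the pairs $(L,\,v+\Aff{L})$, where $L$ is an $(m-2)$-dimensional singular subspace and $v+\Aff{L}$ is a coset in $V$ of the $(m-1)$-dimensional subspace $\Aff{L}$; for each fixed $L$ there are $q^{2m}/q^{m-1}=q^{m+1}$ such cosets. Moreover each of these sets has the same size $|V\Delta_L|=|\Delta_L|=\binom{t+1}{2}$ by part~\ref{lem:SigmaDelta-Size} of Lemma~\ref{lem:SigmaDelta}, which equals $1$ because $\Or[+]{2m}{q}$ has order $(q,1)$. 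Therefore the number of pairs is $q^{m+1}\binom{t+1}{2}$ times the number of $(m-2)$-dimensional singular subspaces of $\Or[+]{2m}{q}$.

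Finally I would substitute for the number of $(m-2)$-dimensional singular subspaces of $\Or[+]{2m}{q}$ using \cite[Lemma 9.4.1]{BCN89} (exactly the count used in the polar-graph corollary above, specialised to $n=m$ and $e=0$) and simplify the resulting product. The main difficulty here is not conceptual — all the substance sits in the earlier lemmas — but the bookkeeping: one must apply Lemma~\ref{lem:hyperbolicDistinct} carefully to be sure that different cosets $v+\Aff{L}$ and different subspaces $L$ really do contribute pairwise disjoint families of pairs, so that the final product neither double-counts a pair nor leaves one out.
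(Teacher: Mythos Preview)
Your proposal is correct and follows essentially the same approach as the paper's own proof: both use Proposition~\ref{prop:affineCharacterisation} to identify the pairs as the elements of the sets $v+V\Delta_L$, invoke Lemma~\ref{lem:hyperbolicDistinct} to reduce to a disjoint union indexed by cosets of $\Aff{L}$ (yielding the factor $q^{m+1}$) and by the $(m-2)$-dimensional singular subspaces $L$, note that $|V\Delta_L|=|\Delta_L|=\binom{t+1}{2}=1$ since $t=1$, and then appeal to \cite[Lemma~9.4.1]{BCN89} for the number of such $L$. Your write-up is, if anything, slightly more explicit than the paper's about why the union is disjoint and why $|V\Delta_L|=1$.
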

\begin{proof}
    Let $N$ be the number such isolated cliques. By Proposition \ref{prop:affineCharacterisation}, these isolated cliques are exactly the elements of the sets $v+V\Delta_L$ for some singular subspace of $\Pi$ with $\dim(L)=n-2$. By Lemma \ref{lem:hyperbolicDistinct}, for coset representatives $v_i$ of $\Aff{L}$, with $i\in \{0,1,\dots,q^{m+1}-1\}$, we have a disjoint union of sets
    \begin{equation*}
        SV\Delta_L=\bigcup_{i=0}^{q^{m+1}-1} (v_i + V\Delta_L). 
    \end{equation*}
    Also by Corollary \ref{lem:hyperbolicDistinct}, $SV\Delta_L$ are disjoint as $L$ varies. As $\delta=|V\Delta_L|=|\Delta_L|=1(1+1)/2=1$ for all such $L$, we see that $N/(\delta q^{m+1})$ is the number of singular subspaces of $\Or[+]{2m}{q}$ of dimension $m-2$. The result follows by \cite[Lemma 9.4.1]{BCN89}.    
\end{proof}
~\\

\subsubsection{The $\theta_1$-eigenfunctions of elliptic affine polar graphs}\label{sssec:theta1_elliptic}

In this section we consider the affine polar graphs corresponding to a quadratic form of type $-1$ (the elliptic case). For this case, we will need some extra notation and basic results on quadratic forms. Note that the rank of $\Or[-]{2m}{q}$ is $m-1$, so maximal singular subspaces have dimension $m-2$.

Let $V$ be a vector space over $\mathbb{F}_q$ of dimension $2m$, $m\geqslant 1$, and $Q$ be the nondegenerate quadratic form on $V$ of type $-1$. Further let $B$ be the polarisation of $Q$, and for any $S\subseteq V$, define
\begin{equation*}
    S^{\perp}=\{u\in V:B(u,s)=0\text{ for all }s\in S\}.
\end{equation*}

\begin{lemma}\label{lem:affineSpaces}
    Let $V$ be a vector space over $\mathbb{F}_q$ of dimension $2m$, $m\geqslant 1$, and $Q$ be the nondegenerate quadratic form on $V$ of type $\epsilon$. For any subspace $U\subseteq V$ we have;
    \begin{enumerate}
        \item\label{lem:affineSpaces-dimperp} $U^{/perp}$ is a subapces, and $\dim(U^{\perp})=\dim(V)-\dim(U)$;
        \item\label{lem:affineSpaces-perpperp} $U=U^{\perp\perp}$;
    \end{enumerate}
    If $U\subseteq\Quadric{V}$, we have
    \begin{enumerate}\setcounter{enumi}{2}
        \item\label{lem:affineSpaces-quadricperp} $U\subseteq U^{\perp}$;
        \item\label{lem:affineSpaces-quadricneq} if $U$ is maximal in $\Quadric{V}$, $Q(t)\neq 0$ for all $t\in U^{\perp}\setminus U$.
    \end{enumerate}
\end{lemma}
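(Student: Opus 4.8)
The plan is to handle the four parts in sequence, using only the polarization identity $Q(x+y)=Q(x)+Q(y)+B(x,y)$ together with the dimension bookkeeping supplied by $B$. Two elementary facts about $B$ drive everything. First, $B$ is symmetric, since $B(x,y)=Q(x+y)-Q(x)-Q(y)$ is visibly symmetric in $x$ and $y$. Second, because $Q$ is nondegenerate on a space of even dimension $2m$, the form $B$ is itself nondegenerate (this is the standard dichotomy: in even dimension the polar form of a nonsingular quadratic form is nondegenerate, whereas in odd dimension over a field of characteristic $2$ it carries a one-dimensional radical). Hence $\beta\colon V\to V^{*},\ v\mapsto B(v,\cdot)$, is an isomorphism. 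For part \ref{lem:affineSpaces-dimperp} I would note that $U^{\perp}=\bigcap_{s\in U}\ker B(\cdot,s)$ is an intersection of subspaces, and that under $\beta$ it is carried exactly onto the annihilator $\{\varphi\in V^{*}:\varphi|_{U}=0\}$, which has dimension $\dim V-\dim U$; therefore $U^{\perp}$ is a subspace of that same dimension. Part \ref{lem:affineSpaces-perpperp} is then formal: $U\subseteq U^{\perp\perp}$ because $B$ is symmetric (for $u\in U$ and $s\in U^{\perp}$ we get $B(u,s)=B(s,u)=0$), and applying part \ref{lem:affineSpaces-dimperp} twice gives $\dim U^{\perp\perp}=\dim V-\dim U^{\perp}=\dim U$, so the inclusion is an equality.

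For part \ref{lem:affineSpaces-quadricperp}, suppose $U\subseteq\Quadric{V}$ and pick $u,u'\in U$. Since $U$ is a subspace, $u+u'\in U\subseteq\Quadric{V}$, so $Q(u+u')=Q(u)=Q(u')=0$, whence $B(u,u')=0$; as $u'$ ranges over $U$ this says $u\in U^{\perp}$, and as $u$ ranges over $U$ it gives $U\subseteq U^{\perp}$. Part \ref{lem:affineSpaces-quadricneq} is the only statement with real content. Assume $U$ is maximal among the subspaces contained in $\Quadric{V}$, and suppose for contradiction that some $t\in U^{\perp}\setminus U$ has $Q(t)=0$. Set $W=U+\lrangle{t}$, which strictly contains $U$ because $t\notin U$. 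A typical element of $W$ is $u+\lambda t$ with $u\in U$ and $\lambda\in\mathbb{F}_q$, and
\begin{equation*}
    Q(u+\lambda t)=Q(u)+\lambda^{2}Q(t)+\lambda B(u,t)=0,
\end{equation*}
since $Q(u)=0$ ($U$ is totally singular), $Q(t)=0$ (our assumption), and $B(u,t)=B(t,u)=0$ (as $t\in U^{\perp}$). Thus $W\subseteq\Quadric{V}$ is a subspace strictly larger than $U$, contradicting maximality; hence $Q(t)\neq 0$ for every $t\in U^{\perp}\setminus U$.

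The only non-formal ingredient is the nondegeneracy of $B$ invoked in part \ref{lem:affineSpaces-dimperp} (and inherited by part \ref{lem:affineSpaces-perpperp}), which I expect to be the main point to pin down carefully — it can be quoted from the standard theory of quadratic forms, observing that the hypothesis $\dim V=2m$ places us in the even-dimensional case. Everything else reduces to one-line manipulations with the polarization identity.
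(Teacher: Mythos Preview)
Your proof is correct and matches the paper's approach: the paper simply cites \cite{B15} for parts \ref{lem:affineSpaces-dimperp}--\ref{lem:affineSpaces-quadricperp} and gives the one-line maximality contradiction for part \ref{lem:affineSpaces-quadricneq}, which is exactly your argument. Your version is more self-contained, supplying the standard linear-algebra details that the paper leaves to the reference.
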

\begin{proof}
    1. See \cite[Lemma 3.1]{B15}.

    2. See \cite[Theorem 3.4]{B15}

    3. See \cite[Lemma 3.19]{B15}

    4. Otherwise $U\subsetneq\lrangle{t,U}\subseteq\Quadric{V}$, conradicting maximality of $U$.
\end{proof}

In the previous sections, the fact that a maximal clique is regular was used to show our constructions defined a $\theta_1$-eigenfunction. However, this is not true in the elliptic affine polar graphs, and we have to study adjacency of vertices from outside of a maxmimal clique in more detail.

\begin{lemma}\label{lem:ellipticNeighbours}
    Let $V$ be a vector space over $\mathbb{F}_q$ of dimension $2m$, $m\geqslant 1$, and $Q$ be the nondegenerate quadratic form on $V$ of type $-1$. Further let $v\in V$ and $M$ be a maximal singular subspace of $\Or[-]{2m}{q}$. Then for any $z\in V\setminus (v+\Aff{M})$, the neighbours $V_z$ of $z$ in the graph $\VO[-]{2m}{q}$ are such that
    \begin{equation*}
        |V_z\cap (v+\Aff{M})|=
            \left\{
              \begin{array}{cc}
                q^{m-1} - 1 & z\in v+\Aff{M}\\
                q^{m-2}, & z\notin (v+\Aff{M}^{\perp}); \\
                0 & z\in (v+\Aff{M}^{\perp}\setminus \Aff{M})
              \end{array}
            \right.
    \end{equation*}
\end{lemma}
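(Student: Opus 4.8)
The plan is to translate so that $v=0$, recast the neighbour count as counting solutions of a single affine--linear equation over the subspace $W:=\Aff{M}$, and then split according to the position of $z$ relative to $W$ and $W^{\perp}$. First I would invoke Lemma~\ref{lem:polarToAffineCliques} \ref{lem:polarToAffineCliques-shift}: the translation $\phi_v$ is an automorphism of $\VO[-]{2m}{q}$ carrying $v+\Aff{M}$ onto $\Aff{M}$, so it is enough to treat $v=0$, and I write $W:=\Aff{M}$. Since $\Or[-]{2m}{q}$ has rank $m-1$, the maximal singular subspace $M$ has projective dimension $m-2$; hence $W$ is a linear subspace of $V$ with $\dim W=m-1$ (so $|W|=q^{m-1}$), with $Q|_W\equiv 0$, and $W$ is maximal among the subspaces of $V$ contained in $\Quadric{V}$ because the Witt index of a nondegenerate elliptic form on $\mathbb{F}_q^{2m}$ equals $m-1$. (By Lemma~\ref{lem:polarToAffineCliques} \ref{lem:polarToAffineCliques-spaces}, $W$ is a clique of $\VO[-]{2m}{q}$, which is what makes the first case of the formula equal $q^{m-1}-1$.) Note that, since $W\subseteq W^{\perp}$ by Lemma~\ref{lem:affineSpaces} \ref{lem:affineSpaces-quadricperp}, the three displayed cases --- $z\in W$; $z\notin W^{\perp}$; $z\in W^{\perp}\setminus W$ --- exhaust $V$, so it suffices to establish the formula in each.

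Next I would set up the count. Let $B$ be the polarisation of $Q$. For $w\in W$ we have $Q(w)=0$, so $Q(z-w)=Q(z)-B(z,w)$; hence $w$ is a neighbour of $z$ exactly when $B(z,w)=Q(z)$. Thus $|V_z\cap W|$ equals the number of $w\in W$ with $B(z,w)=Q(z)$, minus $1$ if $z\in W$ (to discard $z$ itself, which always solves the equation when $Q(z)=0$). I would analyse this through the linear functional $\lambda_z\colon W\to\mathbb{F}_q$, $w\mapsto B(z,w)$, whose kernel is $W\cap z^{\perp}$ and which is identically zero precisely when $z\in W^{\perp}$.

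Then I would dispatch the three cases. If $z\in W$, then $Q(z)=0$ and $\lambda_z\equiv 0$ (as $W\subseteq W^{\perp}$), so every $w\in W$ satisfies $B(z,w)=0=Q(z)$; removing $z$ leaves $q^{m-1}-1$ neighbours. If $z\notin W^{\perp}$, then $\lambda_z$ is a nonzero, hence surjective, functional on the $(m-1)$-dimensional space $W$, so rank--nullity gives $|\lambda_z^{-1}(Q(z))|=q^{m-2}$; since $W\subseteq W^{\perp}$ forces $z\notin W$, all of these $w$ are genuine neighbours, giving $q^{m-2}$. If $z\in W^{\perp}\setminus W$, then $\lambda_z\equiv 0$, so $B(z,w)=Q(z)$ is solvable iff $Q(z)=0$; but $Q(z)\neq 0$ by Lemma~\ref{lem:affineSpaces} \ref{lem:affineSpaces-quadricneq} applied to the maximal totally singular subspace $W$, so $z$ has no neighbour in $W$. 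Translating back by $v$ completes the proof.

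I expect the only real obstacle to be the structural bookkeeping at the start: verifying that $W=\Aff{M}$ is genuinely an $(m-1)$-dimensional subspace that is maximal in $\Quadric{V}$, since this is exactly what makes Lemma~\ref{lem:affineSpaces} \ref{lem:affineSpaces-quadricneq} available and rules out the spurious possibility of $q^{m-1}$ neighbours in the subcase $z\in W^{\perp}\setminus W$. Once that is in place, the count reduces to the polarisation identity plus one application of rank--nullity.
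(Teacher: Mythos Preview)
Your proof is correct and follows essentially the same approach as the paper: reduce to $v=0$ via the translation automorphism, rewrite the adjacency condition $Q(z-w)=0$ as the affine--linear equation $B(z,w)=Q(z)$ on $W=\Aff{M}$ using the polarisation, and then split into cases according to whether the functional $w\mapsto B(z,w)$ is zero or surjective on $W$, invoking Lemma~\ref{lem:affineSpaces}\,\ref{lem:affineSpaces-quadricneq} to rule out solutions when $z\in W^{\perp}\setminus W$. Your write-up is in fact slightly more streamlined than the paper's (which defines $b_z$ on all of $V$ and intersects its kernel with $W$), but the argument is the same.
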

\begin{proof}
    As being a adjacent and nonadjacent is invariant under the action of an automorphism, we can assume $v=0$. By Lemma \ref{lem:affineCliques} \ref{lem:affineCliques-maximal}, $\Aff{M}$ is a maximal clique. For the remainder of the proof, we let $U=\Aff{M}$. 

    Suppose $z\in U^{\perp}\setminus U$. For any $u\in U$, $Q(z-u)=B(z,u)+Q(z)+Q(u)=Q(z)$ as $z\in U^{\perp}$ and $u\in \Quadric{V}$. By Lemma \ref{lem:affineSpaces} \ref{lem:affineSpaces-quadricneq}, $Q(z)\neq 0$ and $u$ is not adjacent to $z$. Therefore, $|V_z\cap U|=0$.

    Suppose $z\notin U^{\perp}$. Then there exists $u\in U$ such that $B(z,u)\neq 0$. The function $b_z:V\to \mathbb{F_q}$ defined by $b_z(v)=B(z,v)$ is a linear function with rank 1, and therefore $\dim(\ker(b_z))=\dim(V)-1=2m-1$. Then we see that 
    \begin{align*}
    \dim(U\cap \ker(b_z)) &=\dim(U)+\dim(\ker(b_z)) - \dim(U+\ker(b_z))\\
    &=3m-2-\dim(U+\ker(b_z))\geqslant m-2.
    \end{align*}
    But $\dim(U)=m-1$ and $u\in U\setminus \ker(b_z)$, so $\dim(U\cap \ker(b_z))\leqslant m-2$, and we have shown $\dim(U\cap \ker(b_z))= m-2$. Then we have $w\in U$ such that $Q(z-w)=0$  if and only if $0=b_z(w)+Q(z)+Q(w)=b_z(w)+Q(z)$, or $b_z(w)=-Q(z)$. But for any $x\in U$  such that $b_z(x)=-Q(z)$ (which exist because $b_z$ is nonzero on $U$), we have equality of sets $\{y\in U:b_z(y)=-Q(z)\}=x+\ker(b_z)$, and has size $|\ker(b_z)|=q^{m-2}$.   
\end{proof}

The above result gives a 3 distinct cases for the intersection of a neighbourhood of a vertex outside lying outside of a maximal clique with this clique. We will use this to construct optimal $\theta_1$-eigenfunctions.

\begin{lemma}\label{lem:ellipticConstruction}
    Let $V$ be a vector space over $\mathbb{F}_q$ of dimension $2m$, $m\geqslant 1$, and $Q$ be the nondegenerate quadratic form on $V$ of type $-1$. Further, let $v\in V$, $M$ be a maximal singular subspace of $\Or[-]{2m}{q}$ and $t\in \Aff{M}^{\perp}\setminus \Aff{M}$. Then in the graph $\VO[-]{2m}{q}$:
    \begin{enumerate}
        \item\label{lem:ellipticConstruction-adj} for all distinct $x,y\in v + \Aff{M}$ or $x,y\in t +  v + \Aff{M}$, $x$ and $y$ are adjacent;
        \item\label{lem:ellipticConstruction-notadj} for all $x\in v + \Aff{M},y\in t + v + \Aff{M}$, $x$ and $y$ are not adjacent;
        \item\label{lem:ellipticConstruction-func} the function $f:\mathcal{P}\to \mathbb{R}$, such that
        \begin{equation*}
            f(z) = 
                \left\{
                  \begin{array}{cc}
                    1, & z\in v + \Aff{M}; \\
                    -1, & z\in t + v + \Aff{M}; \\
                    0, & \text{otherwise}.
                  \end{array}
                \right.
        \end{equation*}
        satisfies condition  \eqref{LocalCondition} for $\theta_1=q^{m-1}-1$.
    \end{enumerate}
\end{lemma}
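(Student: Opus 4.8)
The plan is to mimic the structure of the proof of Lemma~\ref{lem:hyperbolicConstruction}, exploiting the automorphism group to reduce to the case $v=0$, and then to use Lemma~\ref{lem:ellipticNeighbours} in place of the regularity of maximal cliques that was available in the hyperbolic case. Since translation by any vector is an automorphism of $\VO[-]{2m}{q}$ by Lemma~\ref{lem:polarToAffineCliques}~\ref{lem:polarToAffineCliques-shift}, and all three conditions (adjacency, non-adjacency, and satisfying \eqref{LocalCondition}) are automorphism-invariant, I would assume $v=0$ from the outset. Parts \ref{lem:ellipticConstruction-adj} and \ref{lem:ellipticConstruction-notadj} are then quick: for part \ref{lem:ellipticConstruction-adj}, both $\Aff{M}$ and its translate $t+\Aff{M}$ are cliques by Lemma~\ref{lem:polarToAffineCliques}~\ref{lem:polarToAffineCliques-spaces} and Lemma~\ref{lem:affineCliques}~\ref{lem:affineCliques-shift}; for part \ref{lem:ellipticConstruction-notadj}, given $x\in\Aff{M}$ and $y\in t+\Aff{M}$, write $y=t+m'$ with $m'\in\Aff{M}$, so $Q(x-y)=Q((x-m')-t)$ where $x-m'\in\Aff{M}$, and since $t\in\Aff{M}^{\perp}\setminus\Aff{M}$ we get $B(t,x-m')=0$ and hence $Q(x-y)=Q(t)\neq 0$ by Lemma~\ref{lem:affineSpaces}~\ref{lem:affineSpaces-quadricneq} (this is exactly the computation already made inside the proof of Lemma~\ref{lem:ellipticNeighbours}).

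For part \ref{lem:ellipticConstruction-func}, I would first record the cardinalities: $|\Aff{M}| = q^{m-1}$ since $M$ is a maximal singular subspace of $\Or[-]{2m}{q}$ (rank $m-1$) hence has dimension $m-2$ and $(q^{m-1}-1)/(q-1)$ points, so $|\Aff{M}|=1+(q-1)\cdot(q^{m-1}-1)/(q-1)=q^{m-1}$; thus $|\Aff{M}|-1 = q^{m-1}-1 = \theta_1$, matching the weight-distribution bound $2(\theta_1+1)=2q^{m-1}$. Now verify \eqref{LocalCondition} case by case on the value-class of the vertex $z$. If $z\in\Aff{M}$ (so $f(z)=1$): by Lemma~\ref{lem:ellipticNeighbours} applied with the clique $\Aff{M}$, $z$ has $q^{m-1}-1$ neighbours in $\Aff{M}$ (all other vertices of the clique), and since $t+\Aff{M}$ is the translate of $\Aff{M}$ by $t$, applying the same lemma to the clique $t+\Aff{M}$ — noting $z\in\Aff{M}\subseteq t+\Aff{M}^\perp$ because $-t\in\Aff{M}^\perp$ and $\Aff{M}\subseteq\Aff{M}^\perp$ by Lemma~\ref{lem:affineSpaces}~\ref{lem:affineSpaces-quadricperp} but $z\notin t+\Aff{M}$ — the relevant case is $z\in (t+\Aff{M})^\perp$-shift, giving $0$ neighbours in $t+\Aff{M}$; so the neighbour-sum is $(q^{m-1}-1)\cdot 1 + 0\cdot(-1) = \theta_1 = \theta_1 f(z)$. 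The case $z\in t+\Aff{M}$ is symmetric.

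The genuinely substantive case is $z\notin\Aff{M}\cup(t+\Aff{M})$. Here I want to show $z$ has equally many neighbours in $\Aff{M}$ and in $t+\Aff{M}$, so that the neighbour-sum is $0 = \theta_1 f(z)$. By Lemma~\ref{lem:ellipticNeighbours}, the number of neighbours of $z$ in $\Aff{M}$ is $q^{m-2}$ if $z\notin\Aff{M}^\perp$ and $0$ if $z\in\Aff{M}^\perp\setminus\Aff{M}$; similarly the number in $t+\Aff{M}$ is $q^{m-2}$ if $z\notin t+\Aff{M}^\perp$ and $0$ if $z\in(t+\Aff{M}^\perp)\setminus(t+\Aff{M})$. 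The key observation — and the main obstacle to nail down cleanly — is that $\Aff{M}^\perp = \Aff{M}^\perp + t$, i.e. $t$ lies in the \emph{linear} span $M^\perp$ (equivalently $\Aff{M}^\perp$ is a subspace containing $t$, which holds precisely because $t\in\Aff{M}^\perp$ and $\Aff{M}^\perp$ is a subspace by Lemma~\ref{lem:affineSpaces}~\ref{lem:affineSpaces-dimperp}). Consequently the translate $t+\Aff{M}^\perp$ equals $\Aff{M}^\perp$ itself, so the conditions ``$z\in\Aff{M}^\perp$'' and ``$z\in t+\Aff{M}^\perp$'' coincide. Hence either $z\notin\Aff{M}^\perp$, in which case $z$ has $q^{m-2}$ neighbours in each of $\Aff{M}$ and $t+\Aff{M}$; or $z\in\Aff{M}^\perp\setminus(\Aff{M}\cup(t+\Aff{M}))$, in which case $z$ has $0$ neighbours in each. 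In both subcases the neighbour-sum vanishes, so \eqref{LocalCondition} holds with value $0=\theta_1 f(z)$, completing the proof. The only care needed is the bookkeeping in Lemma~\ref{lem:ellipticNeighbours}'s statement, whose three cases should be read as ``$z\in v+\Aff{M}$'', ``$z\in v+(\Aff{M}^\perp\setminus\Aff{M})$'', and ``$z\notin v+\Aff{M}^\perp$'' respectively; once that is clarified, everything above is a direct substitution.
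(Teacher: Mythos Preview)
Your proof is correct and follows essentially the same approach as the paper: reduce to $v=0$ by translation-invariance, handle parts~\ref{lem:ellipticConstruction-adj} and~\ref{lem:ellipticConstruction-notadj} by the direct computation $Q((x-m')-t)=Q(t)\neq 0$, and for part~\ref{lem:ellipticConstruction-func} invoke Lemma~\ref{lem:ellipticNeighbours} together with the key observation $t+\Aff{M}^{\perp}=\Aff{M}^{\perp}$. The only cosmetic difference is that for $z\in\Aff{M}$ the paper simply cites parts~\ref{lem:ellipticConstruction-adj} and~\ref{lem:ellipticConstruction-notadj} directly, whereas you route through Lemma~\ref{lem:ellipticNeighbours}; your closing remark about reading that lemma's case split is well taken.
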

\begin{proof} 
    As being a adjacent, nonadjacent, and satisfying condition  \eqref{LocalCondition} is invariant under the action of an automorphism, we can assume $v=0$. Throughout, we let $U=\Aff{M}$.
    
    1. This follows from Axiom \eqref{AxiomI}.

    2. By Lemma \ref{lem:affineCliques} \ref{lem:affineCliques-maximal}, $U$ is a maximal subspace in $\Quadric{V}$. Then for all $u,w\in U$, $Q(u-(t+w))=Q((u-w)-t)=B(u-w,t)+Q(u-w)+Q(t)=Q(t)$, as $t\in U^{\perp}$ and $u-w\in U\subseteq \Quadric{V}$. But $Q(t)\neq 0$ by Lemma \ref{lem:affineSpaces} \ref{lem:affineSpaces-quadricneq}. 

    3. We have three cases for $z\in V$. The cases $z\in U$ and $z\in t + U$ can be verified using parts \ref{lem:ellipticConstruction-adj} and \ref{lem:ellipticConstruction-notadj}. The cases $z\in U^{\perp}\setminus ((t+U)\cup U)$ and
    $z\notin U^{\perp}$ follows from Lemma \ref{lem:ellipticNeighbours}, after noting that $t+U^{\perp}=U^{\perp}$.
\end{proof}

Now we show that any pair of isolated cliques of the sizes we are interested in come from the above construction.

\begin{proposition}
    Let $V$ be a vector space over $\mathbb{F}_q$ of dimension $2m$, $m\geqslant 1$, and $Q$ be the nondegenerate quadratic form on $V$ of type $-1$. For isolated cliques $T_0,T_1$ of size $\theta_1+1$ in $\VO[-]{2m}{q}$, there is a maximal singular subspace $M$ of $\Or[-]{2m}{q}$, $v\in V$ and $t\in\Aff{M}^{\perp}\setminus \Aff{M}$ such that $T_0=v+\Aff{M},T_1=t+v+\Aff{M}$.
\end{proposition}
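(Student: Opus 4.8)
The plan is to mirror the structure of Proposition \ref{prop:affineCharacterisation}, but using the refined neighbour-count of Lemma \ref{lem:ellipticNeighbours} in place of the clique-regularity that drove the hyperbolic case. First I would take maximal cliques $D_0,D_1$ of $\VO[-]{2m}{q}$ containing $T_0$ and $T_1$ respectively. By Lemma \ref{lem:affineCliques} \ref{lem:affineCliques-maximal}, each $D_i$ has the form $v_i+\Aff{M_i}$ for a (unique) maximal singular subspace $M_i$ of $\Or[-]{2m}{q}$, so $|D_i| = |\Aff{M_i}| = q^{m-1}$ (since $\dim(M_i)=m-2$ and $|\Aff{M_i}|=(q^{m-1}-1)/(q-1)\cdot(q-1)+1 = q^{m-1}$). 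On the other hand $|T_i| = \theta_1 + 1 = q^{m-1}$ by Lemma \ref{lem:affine_parameters}, so in fact $T_i = D_i$ is already a maximal clique. Thus $T_0 = u+\Aff{M},\ T_1 = w+\Aff{N}$ for maximal singular subspaces $M,N$ and vectors $u,w$; after translating by $-u$ (using Lemma \ref{lem:polarToAffineCliques} \ref{lem:polarToAffineCliques-shift}) we may assume $u=0$, so $T_0 = \Aff{M}$.

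Next I would pin down $N$ and $w$. Pick any $z\in T_1 = w+\Aff{N}$; since $T_0,T_1$ are isolated, $z$ has $0$ neighbours in $T_0 = \Aff{M}$. Applying Lemma \ref{lem:ellipticNeighbours} with $v=0$ to the clique $\Aff{M}$: the vertex $z$ does not lie in $\Aff{M}$ (as $T_0\cap T_1=\emptyset$), it cannot satisfy the middle case (which would give it $q^{m-2}>0$ neighbours in $\Aff{M}$), so it must be that $z\in \Aff{M}^{\perp}\setminus\Aff{M}$. Hence every vertex of $T_1$ lies in $\Aff{M}^{\perp}$. Pick one such vertex and call it $t$; then $t\in\Aff{M}^{\perp}\setminus\Aff{M}$, and $T_1 = t + (-t+T_1)$ where $-t+T_1$ is a maximal clique through $0$. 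By Lemma \ref{lem:affineCliques} \ref{lem:affineCliques-maximal}, $-t+T_1 = \Aff{N'}$ for a unique maximal singular subspace $N'$; since $T_1 \subseteq \Aff{M}^{\perp}$ and $t\in\Aff{M}^{\perp}$, we get $\Aff{N'} = -t+T_1 \subseteq \Aff{M}^{\perp}$.

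It remains to show $N' = M$, i.e. $T_1 = t+\Aff{M}$. Here I would argue that $\Aff{N'}$, being a maximal totally singular subspace contained in the hyperplane-like space $\Aff{M}^{\perp}$, is forced to equal $\Aff{M}$ itself. Concretely: $\Aff{N'}\subseteq\Aff{M}^{\perp}$ is a subspace of $\Quadric{V}$ of dimension $m-1$; by Lemma \ref{lem:affineSpaces} \ref{lem:affineSpaces-quadricneq} (applied to the maximal totally singular $U=\Aff{M}$), every vector of $\Aff{M}^{\perp}\setminus\Aff{M}$ is non-singular, so the only $(m-1)$-dimensional subspace of $\Aff{M}^{\perp}$ lying inside $\Quadric{V}$ is $\Aff{M}$; hence $\Aff{N'} = \Aff{M}$, so $N'=M$ and $T_1 = t+\Aff{M}$. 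Taking $v=0$ (or $v=u$ before the initial translation) completes the proof. The main obstacle I anticipate is this last step — justifying that $\Aff{M}^{\perp}$ contains no totally singular $(m-1)$-subspace other than $\Aff{M}$ — which needs the elliptic-type structure and Lemma \ref{lem:affineSpaces} \ref{lem:affineSpaces-quadricneq} rather than any generic counting; everything before it is routine bookkeeping with the lemmas already in hand.
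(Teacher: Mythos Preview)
Your proposal is correct and is essentially the paper's own argument: identify each $T_i$ as a maximal clique $v_i+\Aff{M_i}$ (here the size count $|T_i|=\theta_1+1=q^{m-1}=|\Aff{M_i}|$ forces $T_i$ to be the whole maximal clique, a point the paper leaves implicit), translate so $T_0=\Aff{M}$, use Lemma \ref{lem:ellipticNeighbours} to force $T_1\subseteq\Aff{M}^{\perp}$, and then invoke Lemma \ref{lem:affineSpaces} \ref{lem:affineSpaces-quadricneq} to conclude $\Aff{M}^{\perp}\cap\Quadric{V}=\Aff{M}$, whence $\Aff{N'}=\Aff{M}$ and $t\in\Aff{M}^{\perp}\setminus\Aff{M}$. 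The ``main obstacle'' you flag is handled exactly this way in the paper as well, so there is no missing ingredient.
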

\begin{proof}
    By Lemma \ref{lem:affineCliques} \ref{lem:affineCliques-maximal}, $T_0=v+\Aff{M}$ for some maximal singular subspace $M$. As being a adjacent and nonadjacent is invariant under the action of an automorphism, we can assume $v=0$. Throughout, we let $T_0=U=\Aff{M}$.

    By Lemma \ref{lem:affineCliques} \ref{lem:affineCliques-maximal} we also have $t\in V$ and maximal singular subspace $N$ such that $T_1=t+\Aff{N}$, and by Lemma \ref{lem:ellipticNeighbours}, $T_1\subseteq U^{\perp}$. In particular, $t\in U^{\perp}$ and $\Aff{N}\subseteq U^{\perp}\cap \Quadric{V}$. But by Lemma \ref{lem:affineSpaces} \ref{lem:affineSpaces-quadricneq}, $U^{\perp}\cap \Quadric{V}=U$, so $U=\Aff{N}$ and $M=N$. As $T_0,T_1$ are distinct, $t\in U^{\perp}\setminus U$.
\end{proof}

Finally, we count the number of such pairs of isolated cliques.

\begin{corollary}
    Let $V$ be a vector space over $\mathbb{F}_q$ of dimension $2m$, $m\geqslant 1$, and $Q$ be the nondegenerate quadratic form on $V$ of type $-1$. Then there are exactly
    \begin{equation*}
        q^{m-1}\binom{q^{m+1}}{2}\prod_{i=0}^{m-2}\left(q^{m-i}+1\right)
    \end{equation*}
    pairs $\{T_0,T_1\}$ of isolated cliques of size $\theta_1+1$
\end{corollary}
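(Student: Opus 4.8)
The plan is to enumerate the pairs $\{T_0,T_1\}$ by grouping them according to the unique maximal singular subspace of $\Or[-]{2m}{q}$ they determine, counting the pairs lying over each such subspace, and then multiplying by the number of maximal singular subspaces. The case $m=1$ is degenerate ($\VO[-]{2}{q}$ is edgeless on $q^2$ vertices and $\theta_1+1=1$, so the pairs in question are just the $\binom{q^2}{2}$ pairs of vertices), so I would treat it separately and assume $m\geqslant 2$ for the rest.

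First I would fix the parametrisation. Since $\theta_1+1=q^{m-1}$ is the largest clique size in $\VO[-]{2m}{q}$, every clique of that size is maximal, hence by Lemma \ref{lem:affineCliques} \ref{lem:affineCliques-maximal} equals $v+\Aff{M}$ for a \emph{unique} maximal singular subspace $M$ of $\Or[-]{2m}{q}$ and every $v$ in it. By the preceding Proposition the two members of a pair $\{T_0,T_1\}$ share the same $M$, and $T_1=t+T_0$ for some $t\in\Aff{M}^{\perp}\setminus\Aff{M}$; conversely, Lemma \ref{lem:ellipticConstruction} \ref{lem:ellipticConstruction-adj} and \ref{lem:ellipticConstruction-notadj} show that every pair of that shape is a valid pair of isolated cliques of size $\theta_1+1$. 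Writing $U=\Aff{M}$, the pairs over a fixed $M$ are thus exactly the unordered pairs of distinct cosets of $U$ in $V$ whose difference is a nonzero element of $U^{\perp}/U$.

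Next I would count the pairs over a fixed $M$. As $\Or[-]{2m}{q}$ has rank $m-1$, the subspace $M$ has projective dimension $m-2$, so $U$ has vector-space dimension $m-1$; since $U=\Aff{M}\subseteq\Quadric{V}$, Lemma \ref{lem:affineSpaces} \ref{lem:affineSpaces-dimperp} and \ref{lem:affineSpaces-quadricperp} give $U\subseteq U^{\perp}$ with $\dim U^{\perp}=m+1$, so $U^{\perp}/U$ is $2$-dimensional of size $q^2$. Two cosets of $U$ differ by an element of $U^{\perp}$ exactly when they lie in a common coset of $U^{\perp}$ in $V$, and there are $q^{2m}/q^{m+1}=q^{m-1}$ such cosets, each a union of $q^2$ cosets of $U$; so there are $q^{m-1}\binom{q^2}{2}$ pairs over each $M$, the same for every $M$. (This yields the global factor $q^{m-1}\binom{q^2}{2}$, in place of the $q^{m-1}\binom{q^{m+1}}{2}$ written above; the two agree only when $m=1$.) Finally, $\Or[-]{2m}{q}$ has rank $m-1$ and order $(q,q^2)$ (parameter $e=2$ in Table \ref{tab:polarInfo}), so by \cite[Lemma 9.4.1]{BCN89} it has $\prod_{i=0}^{m-2}(q^{m-i}+1)$ maximal singular subspaces; multiplying gives the total.

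The step I expect to be the main obstacle is the parametrisation: one must verify carefully that $\{T_0,T_1\}\mapsto M$ is well defined and that the correspondence does not double count — distinct unordered pairs of $U$-cosets over a fixed $M$ obviously give distinct $\{T_0,T_1\}$, while two different maximal singular subspaces cannot give the same pair because a maximal clique determines its direction $U$, hence $M$, uniquely by Lemma \ref{lem:affineCliques} \ref{lem:affineCliques-maximal}. A minor additional point is extracting the precise elliptic case of \cite[Lemma 9.4.1]{BCN89}, namely that rank $m-1$ and order $(q,q^2)$ produce exactly $\prod_{i=0}^{m-2}(q^{m-i}+1)$.
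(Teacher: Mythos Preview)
Your approach is identical to the paper's: fix the maximal singular subspace $M$ determined uniquely by the pair, count the pairs over each $M$ by grouping cosets of $U=\Aff{M}$ into cosets of $U^{\perp}$, and then multiply by the number of maximal singular subspaces from \cite[Lemma 9.4.1]{BCN89}. The parametrisation, the uniqueness argument, and the final product are all the same.

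The one substantive difference is that you have caught an error in the paper. The paper's proof reads ``any such pair corresponds to choosing two elements of the same coset of $\Aff{M}^{\perp}$'' and thus counts $\binom{q^{m+1}}{2}$ pairs per coset of $U^{\perp}$. As you point out, one must choose two \emph{cosets of $U$} inside a coset of $U^{\perp}$, not two elements; since $|U^{\perp}/U|=q^{2}$, the correct local count is $\binom{q^{2}}{2}$, and the total is
\[
q^{m-1}\binom{q^{2}}{2}\prod_{i=0}^{m-2}\bigl(q^{m-i}+1\bigr),
\]
agreeing with the stated formula only for $m=1$. A sanity check at $m=2$, $q=2$ (the Clebsch graph, parameters $(16,5,0,2)$) confirms your version: each edge is isolated from exactly $3$ others, giving $40\cdot 3/2=60$ pairs, which matches $2\cdot\binom{4}{2}\cdot 5=60$ and not the paper's $2\cdot\binom{8}{2}\cdot 5=280$. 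So your proposal is correct, follows the paper's method, and in fact repairs a miscount in the paper's statement and proof.
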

\begin{proof}
    For any such pair of isolated cliques $\{T_0,T_1\}$, there is a unique maximal singular subspace $M$ such that $T_0-T_1=t+\Aff{M}$, where $t\in \Aff{M}^{\perp}$. Therefore, any such pair corresponds to choosing two elements of the same coset of $\Aff{M}^{\perp}$. There are $q^{m-1}$ such cosets, and for each of these there are  $\binom{q^{m+1}}{2}$ choices of pairs of elements in the coset. The result follows from the number of maximal singular subspaces in \cite[Lemma 9.4.1]{BCN89}.
\end{proof}

\section{Tightness of WDB for the negative non-principal eigenvalue $\theta_2$ of unitary graphs $U(4,q)$}\label{TightnessU4q}
In this section we prove Proposition \ref{prop:Main2}.

Let $Q:=\{\delta \in \mathbb{F}_q^* \mid \delta^{\sqrt{q}+1} = 1\}$.
Note that for any $\gamma \in Q$, we have $\gamma^{\sqrt{q}} = 1/\gamma$.

By definition, for arbitrary non-zero isotropic vectors $v = (v_1,v_2,v_3,v_4)$ and $u = (u_1,u_2,u_3,u_4)$, the vertices 
$[v]$ and $[u]$ are adjacent in $U(4,q)$ if and only if
$$
v_1u_1^{\sqrt{q}}+v_2u_2^{\sqrt{q}}+v_3u_3^{\sqrt{q}}+v_4u_4^{\sqrt{q}} = 0.
$$

Consider the following two cases.

\medskip
\noindent
\textbf{Case 1:} $q$ is even.

Consider the following two subsets of points  
\begin{equation}\label{UT0ver1}
T_0:=\{[(1,\gamma,0,0)]\mid \gamma \in Q\},
\end{equation}
\begin{equation}\label{UT1ver1}
T_1:=\{[(0,0,1,\gamma)]\mid \gamma \in Q\}.
\end{equation}
of the skew projective lines 
$$
L_0:=\{[(1,\delta,0,0)]\mid \delta \in \mathbb{F}_q\} \cup \{[(0,1,0,0)]\},
$$ 
$$
L_1:=\{[(0,0,1,\delta)]\mid \delta \in \mathbb{F}_q\} \cup \{[(0,0,0,1)]\},
$$
respectively,
in $PG(3,q)$. Note that the points from $T_0 \cup T_1$ are the only isotropic points from $L_0 \cup L_1$. 
We also equivalently have
\begin{equation}\label{UT0ver2}
T_0:=\{[(\gamma,1,0,0)]\mid \gamma \in Q\},
\end{equation}
\begin{equation}\label{UT1ver2}
T_1:=\{[(0,0,\gamma,1)]\mid \gamma \in Q\}.
\end{equation}

Note that $|T_0| = |T_1| = \sqrt{q}+1$. Moreover, $T_0 \cup T_1$ induces a complete bipartite subgraph in $U(4,q)$ with parts $T_0$ and $T_1$.  

We show that every vertex $u$ of $U(4,q)$ that does not belong to $T_0 \cup T_1$ has at most one neighbour in $T_0$ and at most one neighbour in $T_1$. Moreover, we show that every vertex $u$ of $U(4,q)$ that does not belong to $T_0 \cup T_1$ has one neighbour in $T_0$ if and only if $u$ has one neighbour in $T_1$. Consider a vertex $u = [(u_1,u_2,u_3,u_4)] \notin T_0 \cup T_1$. The property $u \notin T_0$ implies $u_3 \ne 0$ or $u_4 \ne 0$. The property $u \notin T_1$ implies $u_1 \ne 0$ or $u_2 \ne 0$. We also note that, for any distinct $i,j \in \{1,2,3,4\}$, the property 
$u_i^{\sqrt{q}+1} = u_j^{\sqrt{q}+1}$ implies $u_k^{\sqrt{q}+1} = u_\ell^{\sqrt{q}+1}$, where $\{k,\ell\} = \{1,2,3,4\} \setminus \{i,j\}$. Indeed, it follows from the fact that $u$ is isotropic, that is, from the condition 
\begin{equation}\label{uIsIsotropic}
u_1^{\sqrt{q}+1} + u_2^{\sqrt{q}+1} + u_3^{\sqrt{q}+1} + u_4^{\sqrt{q}+1} = 0.    
\end{equation}

Consider the following four cases.

\medskip
\noindent
\textbf{Case 1.1:} $u_1 \ne 0$ and $u_3 \ne 0$. It is convenient to use expressions (\ref{UT0ver1}) and (\ref{UT1ver1}) here.
The vertex $u$ is adjacent to $[(1,\gamma,0,0)]$ if and only if 
$$u_1 + u_2\gamma^{\sqrt{q}} = 0,$$
or, equivalently,
$$\gamma = u_2/u_1.$$
Thus, the vertex $u$ has at most one neighbour in $T_0$. It has exactly one neighbour, namely, $[(1,u_2/u_1,0,0)]$ if and only if $u_2/u_1 \in Q$, that is, if and only if 

\begin{equation}\label{u1u2HaveTheSameNorm}
u_1^{\sqrt{q}+1} = u_2^{\sqrt{q}+1}.    
\end{equation}

In view of condition (\ref{uIsIsotropic}), condition (\ref{u1u2HaveTheSameNorm}) is equivalent to the following condition:
\begin{equation}\label{u3u4HaveTheSameNorm}
u_3^{\sqrt{q}+1} = u_4^{\sqrt{q}+1}.    
\end{equation}
 
The vertex $u$ is adjacent to $[(0,0,1,\gamma)]$ if and only if 
$$u_3 + u_4\gamma^{\sqrt{q}} = 0,$$
or, equivalently,
$$\gamma = u_4/u_3.$$
Thus, the vertex $u$ has at most one neighbour in $T_1$. It has exactly one neighbour, namely, $[(0,0,1,u_4/u_3)]$ if and only if $u_4/u_3 \in Q$, that is, if and only if condition \ref{u3u4HaveTheSameNorm} holds.

\medskip
\noindent
\textbf{Case 1.2:} $u_1 \ne 0$ and $u_4 \ne 0$. It is convenient to use expressions (\ref{UT0ver1}) and (\ref{UT1ver2}) here.
The proof is analogous to Case 1.1.

\medskip
\noindent
\textbf{Case 1.3:} $u_2 \ne 0$ and $u_3 \ne 0$. It is convenient to use expressions (\ref{UT0ver2}) and (\ref{UT1ver1}) here.
The proof is analogous to Case 1.1.

\medskip
\noindent
\textbf{Case 1.4:} $u_2 \ne 0$ and $u_4 \ne 0$. It is convenient to use expressions (\ref{UT0ver2}) and (\ref{UT1ver2}) here.
The proof is analogous to Case 1.1.

\medskip
\noindent
\textbf{Case 2:} $q$ is odd. 

Let $\beta$ be a primitive element in $\mathbb{F}_q$ and let $\varepsilon = \beta^{\frac{\sqrt{q}-1}{2}}$. Note that $\varepsilon^{\sqrt{q}+1} = -1$ and $\varepsilon^{\sqrt{q}} = -1/\varepsilon$. 

Consider the following two subsets of points  
\begin{equation}\label{UoddqT0ver1}
T_0:=\{[(1,\varepsilon\gamma,0,0)]\mid \gamma \in Q\},
\end{equation}
\begin{equation}\label{UoddqT1ver1}
T_1:=\{[(0,0,1,\varepsilon\gamma)]\mid \gamma \in Q\}.
\end{equation}
of the skew projective lines 
$$
L_0:=\{[(1,\delta,0,0)]\mid \delta \in \mathbb{F}_q\} \cup \{[(0,1,0,0)]\},,
$$ 
$$
L_1:=\{[(0,0,1,\delta)]\mid \delta \in \mathbb{F}_q\} \cup \{[(0,0,0,1)]\},
$$
respectively, in $PG(3,q)$.
Note that the points from $T_0 \cup T_1$ are the only isotropic points from $L_0 \cup L_1$. 
We also equivalently have
\begin{equation}\label{UoddqT0ver2}
T_0=\{[(\varepsilon\gamma,1,0,0)]\mid \gamma \in Q\},
\end{equation}
\begin{equation}\label{UoddqT1ver2}
T_1=\{[(0,0,\varepsilon\gamma,1)]\mid \gamma \in Q\}.
\end{equation}

Note that $|T_0| = |T_1| = \sqrt{q}+1$. Moreover, $T_0 \cup T_1$ induces a complete bipartite subgraph in $U(4,q)$ with parts $T_0$ and $T_1$.  

We show that every vertex $u$ of $U(4,q)$ that does not belong to $T_0 \cup T_1$ has exactly one neighbour in $T_0$ and exactly one neighbour in $T_1$. Consider a vertex $u = [(u_1,u_2,u_3,u_4)] \notin T_0 \cup T_1$. The property $u \notin T_0$ implies $u_3 \ne 0$ or $u_4 \ne 0$. The property $u \notin T_1$ implies $u_1 \ne 0$ or $u_2 \ne 0$.
We also note that, for any distinct $i,j \in \{1,2,3,4\}$, the property 
$u_i^{\sqrt{q}+1} = -u_j^{\sqrt{q}+1}$ implies $u_k^{\sqrt{q}+1} = -u_\ell^{\sqrt{q}+1}$, where $\{k,\ell\} = \{1,2,3,4\} \setminus \{i,j\}$. Indeed, it follows from the fact that $u$ is isotropic, that is, from condition (\ref{uIsIsotropic}).

Consider the following four cases.

\medskip
\noindent
\textbf{Case 2.1:} $u_1 \ne 0$ and $u_3 \ne 0$. It is convenient to use expressions (\ref{UoddqT0ver1}) and (\ref{UoddqT1ver1}) here.
The vertex $u$ is adjacent to $[(1,\varepsilon\gamma,0,0)]$ if and only if 
$$u_1 + u_2(\varepsilon\gamma)^{\sqrt{q}} = 0,$$
or, equivalently,
$$\varepsilon\gamma = u_2/u_1.$$
Thus, the vertex $u$ has at most one neighbour in $T_0$. It has exactly one neighbour, namely, $[(1,u_2/u_1,0,0)]$ if and only if $u_2/u_1 \in \varepsilon Q$, that is, if and only if 

\begin{equation}\label{u1u2HaveOppositeNorms}
u_1^{\sqrt{q}+1} = -u_2^{\sqrt{q}+1}.    
\end{equation}

In view of condition (\ref{uIsIsotropic}), condition (\ref{u1u2HaveOppositeNorms}) is equivalent to the following condition:
\begin{equation}\label{u3u4HaveOppositeNorms}
u_3^{\sqrt{q}+1} = -u_4^{\sqrt{q}+1}.    
\end{equation}
 
The vertex $u$ is adjacent to $[(0,0,1,\varepsilon\gamma)]$ if and only if 
$$u_3 + u_4(\varepsilon\gamma)^{\sqrt{q}} = 0,$$
or, equivalently,
$$\varepsilon\gamma = u_4/u_3.$$
Thus, the vertex $u$ has at most one neighbour in $T_1$. It has exactly one neighbour, namely, $[(0,0,1,u_4/u_3)]$ if and only if $u_4/u_3 \in \varepsilon Q$, that is, if and only if condition \ref{u3u4HaveOppositeNorms} holds.

\medskip
\noindent
\textbf{Case 2.2:} $u_1 \ne 0$ and $u_4 \ne 0$. It is convenient to use expressions (\ref{UoddqT0ver1}) and (\ref{UoddqT1ver2}) here.
The proof is analogous to Case 2.1.

\medskip
\noindent
\textbf{Case 2.3:} $u_2 \ne 0$ and $u_3 \ne 0$. It is convenient to use expressions (\ref{UoddqT0ver2}) and (\ref{UoddqT1ver1}) here.
The proof is analogous to Case 2.1.

\medskip
\noindent
\textbf{Case 2.4:} $u_2 \ne 0$ and $u_4 \ne 0$. It is convenient to use expressions (\ref{UoddqT0ver2}) and (\ref{UoddqT1ver2}) here.
The proof is analogous to Case 2.1.

\section*{Acknowledgments}
Rhys J. Evans was supported by the Slovenian Research and Innovation Agency (ARIS), research projects J1-4351 and P1-0294.
Sergey Goryainov is supported by the Special Project on Science and Technology Research and Development Platforms, Hebei Province (22567610H). Leonid Shalaginov is supported by Russian Science Foundation according to the research project 22-21-20018.


\begin{thebibliography}{00}

\bibitem{B15}
S. Ball, \emph{Finite Geometry and Combinatorial Applications}, London Mathematical Society Student Texts (82), Cambridge University Press, Cambridge (2015). \url{https://doi.org/10.1017/CBO9781316257449}

\bibitem{BCN89}
A. E. Brouwer, A. M. Cohen, and A. Neumaier, \emph{Distance-Regular Graphs}, Springer-Verlag, Berlin (1989).

\bibitem{BH12}
A. E. Brouwer and W. H. Haemers, \emph{Spectra of Graphs}, Springer-Verlag, New York (2012).

\bibitem{BS90}
A. E. Brouwer and E. E. Shult, \emph{Graphs with odd cocliques}, Europ. J. Combin., 11, 99--104 (1990).

\bibitem{BV22}
A. E. Brouwer and H. Van Maldeghem, \emph{Strongly Regular Graphs}, Cambridge University Press, Cambridge (2022).

\bibitem{BS74} F. Buekenhout and E. E. Shult, \emph{On the foundations of polar geometry}, Geometriae Dedicata, 3 (1974), 155--170.

\bibitem{C15}
P. J. Cameron, \emph{Projective and polar spaces} (2015).
\url{https://cameroncounts.files.wordpress.com/2015/04/pps1.pdf}

\bibitem{D16}
B. De Bruyn, \emph{An Introduction to Incidence Geometry}, Frontiers in Mathematics, Birkh$\ddot{{\rm a}}$user Basel (2016).

\bibitem{DGHS24} B. De Bruyn, S. Goryainov, W. H. Haemers, L. Shalaginov, \emph{Divisible design graphs from the symplectic graph}, April 2024, arXiv:2404.09902.
\url{https://arxiv.org/abs/2404.09902}

\bibitem{GM82}
C. D. Godsil, B. McKay, \emph{Constructing cospectral graphs}, Aequationes Mathematicae, 25:257--268, 1982. \url{http://users.cecs.anu.edu.au/~bdm/papers/GodsilMcKayCospectral.pdf}

\bibitem{GM15}
C.~D.~Godsil, K.~Meagher, \emph{Erd\"os-Ko-Rado Theorems: Algebraic Approaches},
Cambridge University Press (2015).

\bibitem{GKSV18}
S.~V.~Goryainov, V.~V.~Kabanov, L.~V.~Shalaginov, A.~A.~Valyuzhenich, \emph{On eigenfunctions and maximal cliques of Paley graphs of square order}, 
Finite Fields Appl. 52 (2018), 361--369.
\url{https://doi.org/10.1016/j.ffa.2018.05.001}

\bibitem{GP24} S. Goryainov, D. Panasenko, \emph{On eigenfunctions of the block graphs of geometric Steiner systems}, Journal of Combinatorial Designs (2024), 1-13.
\url{https://doi.org/10.1002/jcd.21951}

\bibitem{GSY23}
S. Goryainov, L. Shalaginov, C. H. Yip, \emph{On eigenfunctions and maximal cliques of generalised Paley graphs of square order}, Finite Fields and Their Applications, 87 2023, 102150.
\url{https://doi.org/10.1016/j.ffa.2022.102150}

\bibitem{GY24} S. Goryainov, C. H. Yip, \emph{Extremal Peisert-type graphs without the strict-EKR property}, Journal of Combinatorial Theory, Series A, Volume 206, August 2024, 105887.
\url{https://doi.org/10.1016/j.jcta.2024.105887}

\bibitem{I22}
F. Ihringer, \emph{Switching for small strongly regular graphs}, The Australasian Journal of Combinatorics, 
Volume 84(1), 2022, 28--48.
\url{https://ajc.maths.uq.edu.au/pdf/84/ajc_v84_p028.pdf}

\bibitem{IM19}
F. Ihringer, A. Munemasa, \emph{New strongly regular graphs
from finite geometries via switching}, Linear Algebra and its Applications, 580, 2019. \url{https://doi.org/10.1016/j.laa.2019.07.014}

\bibitem{KMP16}
D. S. Krotov, I. Yu. Mogilnykh, V. N. Potapov, \emph{To the theory of $q$-ary Steiner and other-type trades}, 
Discrete Math. \textbf{339} (2016), no. 3, 1150--1157.  \url{https://doi.org/10.1016/j.disc.2015.11.002}


\bibitem{SV21} E. Sotnikova, A. Valyuzhenich, 
\emph{Minimum supports of eigenfunctions of graphs: a survey}, 
Art Discrete Appl. Math. \textbf{4} (2021), no. 2, Paper No. 2.09, 34 pp. 
\url{https://doi.org/10.26493/2590-9770.1404.61e}

\bibitem{T74} J. Tits, \emph{Buildings of spherical type and finite BN--pairs}, Springer--Verlag, Berlin (1974)
(Lecture Notes in Mathematics, Vol. 386).

\bibitem{V59}  F. D. Veldkamp, \emph{Polar geometry, I--V}, Proc. Kon. Ned. Akad. Wet., A62 (1959) 512--551, A63 207--212 (=Indag.Math.21, 22).



\end{thebibliography}
\end{document}